\documentclass[12pt,a4paper]{amsart}

\usepackage[a4paper, total={6in, 23.7cm}]{geometry}
\usepackage{amsmath,amsthm,amsfonts,amssymb,mathrsfs,amsbsy}
\usepackage{graphicx}
\usepackage{color}
\usepackage[noadjust]{cite}
\usepackage{marginnote}
\usepackage{xifthen}
\usepackage{mathtools}
\usepackage{dsfont}
\usepackage{bbm}
\usepackage{appendix}
\usepackage{delarray}
\usepackage{enumerate}
\usepackage{enumitem}

\usepackage{xcolor}
\usepackage{tikz}
\usepackage[hypertexnames=false]{hyperref} 
\usepackage{autonum}

\binoppenalty=\maxdimen
\relpenalty=\maxdimen

\linespread{1.06}


\newenvironment{spm}
{\bigl(\begin{smallmatrix}}
	{\end{smallmatrix}\bigr)}

\newenvironment{nalign}
{\begin{equation}\begin{aligned}}
		{\end{aligned}\end{equation}\ignorespacesafterend}

\newtheorem{theorem}{Theorem}

\newtheorem{defi}[theorem]{Definition}

\newtheorem{lemma}[theorem]{Lemma}
\newtheorem{prop}[theorem]{Proposition}

\numberwithin{equation}{section}
\numberwithin{theorem}{section}
\numberwithin{figure}{section}

\theoremstyle{remark}
\newtheorem{rem}[theorem]{Remark}

\theoremstyle{remark}

\theoremstyle{theorem}
\newtheorem{assumption}[theorem]{Assumption}


\newcommand{\rev}[1]{{ #1}}
\newcommand{\revone}[1]{{\color{black} #1}}

\newcommand\R{\mathbb{R}}
\newcommand\C{\mathbb{C}}

\newcommand\bu{\pmb u}
\newcommand\bff{\pmb f}
\newcommand\bA{\pmb A}
\newcommand\bB{\pmb B}
\newcommand\bC{\pmb C}

\newcommand\dx{\,{\rm d}x}
\newcommand\ds{\,{\rm d}s}
\newcommand\dtau{\,{\rm d}\tau}

\newcommand{\UV}{(\pmb U, V)}
\newcommand{\UVx}{\big(\pmb U(x), V(x)\big)}

\newcommand\V{\mathcal{V}}

\newcommand{\oo}{\overline{\Omega}}

\renewcommand\L{\mathcal{L}}
\renewcommand\Re{{\rm Re \,}}

\DeclareMathOperator*{\esssup}{ess\,sup}
\newcommand{\retainlabel}[1]{\label{#1}\sbox0{\ref{#1}}}

\newcommand{\W}[2]
{{
		W^{#1,#2}_
		{
			\ifthenelse
			{
				\equal{#1}{2}
			}
			{
				\nu
			}
			{
			}
		}(\Omega)
}}

\newcommand{\Li}{{L^\infty(\Omega)}}
\newcommand{\LiR}{{L^\infty(\R)}}
\newcommand{\Lp}[1]
{{
		L^
		{
			\ifthenelse
			{
				\isempty{#1}
			}
			{
				p
			}
			{
				#1
			}
		}(\Omega)
}}

\definecolor{lime}{HTML}{A6CE39}
\DeclareRobustCommand{\orcidicon}{%
	\begin{tikzpicture}
		\draw[lime, fill=lime] (0,0) 
		circle [radius=0.16] 
		node[white] {{\fontfamily{qag}\selectfont \tiny ID}};
		\draw[white, fill=white] (-0.0625,0.095) 
		circle [radius=0.007];
	\end{tikzpicture}
	\hspace{-2mm}
}

\definecolor{lime}{HTML}{A6CE39}
\DeclareRobustCommand{\orcidicon}{%
	\begin{tikzpicture}
		\draw[lime, fill=lime] (0,0) 
		circle [radius=0.16] 
		node[white] {{\fontfamily{qag}\selectfont \tiny ID}};
		\draw[white, fill=white] (-0.0625,0.095) 
		circle [radius=0.007];
	\end{tikzpicture}
	\hspace{-2mm}
}

\begin{document} 

\title[Reaction-diffusion-ODE systems]{
Stable discontinuous stationary solutions \\ to reaction-diffusion-ODE systems}

\author[S. Cygan]{Szymon Cygan \href{https://orcid.org/0000-0002-8601-829X}{\orcidicon}}
\address[S. Cygan]{	Instytut Matematyczny, Uniwersytet Wroc\l{}awski, pl. Grunwaldzki 2/4, \hbox{50-384} Wroc\l{}aw, Poland \\ 
\href{https://orcid.org/0000-0002-8601-829X}{orcid.org/0000-0002-8601-829X}}
\email{szymon.cygan@math.uni.wroc.pl}
\urladdr {http://www.math.uni.wroc.pl/~scygan}

\author[A. Marciniak-Czochra]{Anna Marciniak-Czochra
\href{https://orcid.org/0000-0002-5831-6505}{\orcidicon}}
\address[A. Marciniak-Czochra]{
Institute of Applied Mathematics,  Interdisciplinary Center for Scientific Computing (IWR), Heidelberg University, 69120 Heidelberg, Germany\\
\href{https://orcid.org/0000-0002-5831-6505}{orcid.org/0000-0002-5831-6505}}
\email{anna.marciniak@iwr.uni-heidelberg.de}
\urladdr {http://www.biostruct.uni-hd.de/}

\author[G. Karch]{Grzegorz Karch \href{https://orcid.org/0000-0001-9390-5578}{\orcidicon}}
\address[G. Karch]{	
Instytut Matematyczny, Uniwersytet Wroc\l{}awski, pl. Grunwaldzki 2/4, \hbox{50-384} Wroc\l{}aw, Poland \\ 
\href{https://orcid.org/0000-0001-9390-5578}{orcid.org/0000-0001-9390-5578}}
\email{grzegorz.karch@math.uni.wroc.pl}
\urladdr {http://www.math.uni.wroc.pl/~karch}

\author[K. Suzuki]{Kanako Suzuki\href{https://orcid.org/0000-0001-8018-9621}{\orcidicon}}
\address[K. Suzuki]{
Graduate School of Science and Engineering, Ibaraki University,
2-1-1 Bunkyo, Mito 310-8512, Japan\\
\href{https://orcid.org/0000-0001-8018-9621}{orcid.org/0000-0001-8018-9621}
}
\email{kanako.suzuki.sci2@vc.ibaraki.ac.jp}

\date{\today}

\thanks{The authors appreciated helpful comments on this work by Dr.~Chris Kowall.
	S.~Cygan acknowledges a  support  by the Polish NCN grant 2016/23/B/ST1/00434.  The work of A.
	Marciniak-Czochra was supported by the Deutsche Forschungsgemeinschaft (DFG, German Research Foundation) under Germany's Excellence Strategy EXC 2181/1 - 390900948 (the Heidelberg STRUCTURES Excellence Cluster and SFB1324 (B05). K. Suzuki was supported by Scientific Research (C) 18K03354 and 19K03557.}


\begin{abstract}
	A general system of $n$ ordinary differential equations coupled with one reaction-diffusion equation, considered in a bounded $N$-dimensional domain, with no-flux boundary condition is studied in a context of pattern formation. Such initial boundary value problems may have different  types of stationary solutions.  
	In our parallel work [\textit{Instability of all regular stationary solutions to reaction-diffusion-ODE systems} (2021)], regular (\textit{i.e.}~sufficiently smooth) stationary solutions are shown to exist, however, all of them are unstable. The goal of this work is to construct discontinuous stationary solutions to general reaction-diffusion-ODE systems and to find sufficient conditions for their stability.
\end{abstract}

\subjclass[2010]{35K57; 35B35; 35B36; 92C15}

\keywords{Reaction-diffusion equations; stationary solutions, stable and  unstable stationary solutions.}
\maketitle

\section{Introduction} 

This paper is devoted to analysis of pattern formation in systems coupling diffusing and non-diffusing components. We consider a generic system of $n$ ordinary differential equations coupled with a reaction-diffusion equation 
\begin{nalign}
	\bu_t  &=   \bff(\bu,v), &&x\in\overline{\Omega}, \quad t>0,  \label{eq1}\\
	v_t  &=  \gamma \Delta v+g(\bu,v), &&  x\in \Omega, \quad t>0,
\end{nalign}
where 
\begin{nalign}
	\bu = \bu (x,t) = 
	\left( 
	\begin{array}{c}
		u_1(x,t) \\ 
		\vdots \\
		u_n(x,t)
	\end{array}
	\right) 
	\qquad\text{and}\qquad 
	v = v(x,t),
\end{nalign}
with the Neumann boundary condition \noeqref{Neumann}
\begin{nalign}
	\label{Neumann}
	\partial_{\nu}v  =  0  \qquad \text{for}\quad x\in \partial\Omega, \quad t>0,
\end{nalign}
where $\partial_\nu = \nu\cdot\nabla$ with  $\nu$ denoting  the unit
outer normal vector to $\partial \Omega$. 
We consider arbitrary $C^2$-nonlinearities 
\begin{nalign} 
	\label{nonlinearity}
	\bff=\bff(\bu,v) = 
	\left(
	\begin{array}{c}
		f_1(\bu,v) \\
		\vdots\\
		f_n(\bu,v)
	\end{array}
	\right)
	\qquad\text{and}\qquad  
	g=g(\bu,v),
\end{nalign}
and a bounded open domain $\Omega\subseteq \R^N$ for $N \geq 1$ with $C^2$-boundary $\partial\Omega$. The constant $\gamma>0$ denotes a diffusion coefficient in the reaction-diffusion equation from system~\eqref{eq1}.

Our goal is to identify a class of stationary solutions and to examine their stability. Note that a couple $\UV=\UVx$ is a stationary solution of this problem if it satisfies the relation 
\begin{nalign}
	\label{eq:ImpFun}
	\pmb f \UVx  = 0, \quad x\in \overline{\Omega}
\end{nalign}
and the boundary value problem 
\begin{nalign}
	\label{eq:ImFun2}
	&\gamma\Delta V + g\UV = 0, && x\in \Omega, \\
	&\partial_{\nu} V = 0, && x\in \partial\Omega.
\end{nalign}
Problem \eqref{eq:ImpFun}-\eqref{eq:ImFun2} may have different types of solutions. Particular versions of such a system have been studied, {\it e.g.}~in the  works
\cite{MR3973251, MR3679890, MR3600397, MR3583499, MR3345329, MR3329327, MR3214197, MR3059757, MR3039206, MR2833346,MR3220545,MR684081,MR579554,10780947202173109,MR4213664,MR730252}, 
 where the following two types of stationary solutions $\UVx$ were identified:

\begin{enumerate}
	\item \textit{Regular stationary solutions}, where
	$\pmb U(x) = \pmb k\big(V(x)\big)$ for one $C^2$-function $\pmb k=(k_1, ...,k_n)$ and all $x\in \overline{\Omega}$.  
	\label{it:Type1}
	\item  \textit{Jump-discontinuous stationary solutions} with $\pmb U(x)$ obtained using different branches of solutions with respect to $\pmb U(x)$ to equation~\eqref{eq:ImpFun}. 
\end{enumerate} 

The first type of solutions has been investigated recently in the paper \cite{CMCKS01} showing that all regular stationary solutions  are unstable. This work is devoted to the second type. We construct  a class of stationary solutions with jump-discontinuities to problem \eqref{eq1}-\eqref{Neumann} and we provide sufficient conditions for their stability. The results are formulated for generic nonlinearities \eqref{nonlinearity}, covering the specific nonlinearities considered explicitly in previous works on existence and stability of jump-discontinuous stationary solutions. Such solutions have been investigated initially in construction of solutions for two-components systems 
 \begin{nalign}
 	\varepsilon^2 u_{xx}+f(u,v)=0, \quad v_{xx}+g(u,v)=0,
 \end{nalign}
 with a small parameter $\varepsilon$ (\cite{MR419961,MR684081, MR579554}). The jump-discontinuous solution for $\varepsilon =0$ served  as a lowest order approximation in construction of large amplitude solutions to  the system with sufficiently small $\varepsilon>0$.
Stable discontinuous stationary solutions to a system of one ODE coupled with one reaction-diffusion equation with a degenerated diffusion in one dimensional case have been constructed by Weinberger~\cite{MR730252}. \rev{Novick-Cohen and Pego \cite{novick1991stable} proved a stability of discontinuous steady states for an initial boundary value problem for the equation
\begin{nalign}
	u_{t}=\Delta f(u)+\nu \Delta u_{t}
\end{nalign}
which shares some mathematical properties with reaction-diffusion-ODE systems, see~\cite{perthame2020fast,skrzeczkowski2022fast}.} Discontinuous stationary solutions have been also investigated in \rev{other} specific models from applications (\cite{MR2205561,Kthe2020HysteresisdrivenPF,MR3220545}) showing that hysteresis in the nullclines of the ODE-subsystem may lead to emergence of such solutions in a one-dimensional domain. The approach established in this paper allows constructing a special type of discontinuous stationary solutions to general problem \eqref{eq1}-\eqref{nonlinearity} in a higher dimensional domain and provides their stability under less restrictive assumptions. The theory developed here and in \cite{CMCKS01} can be also applied to  other  reaction-diffusion-ODE models in spatially homogeneous environments arising from applications, \cite{MR3214197,MR3583499,MR3679890, MR3973251,MR3345329,MR2297947,MR684081,MR579554,perthame2020fast,MR2205561,gg}. Moreover, the established approach may be also extended to  reaction-diffusion-ODE models in heterogeneous environments considered~\textit{e.g.}~in the book \cite{MR2191264} and in the papers \cite{MR2527521,10780947202173109,MR4213664, MR3968976, MR4147361,MR4135686}. 

This work is structured in the following way. Main results, stated in the next section, concern the existence of jump-discontinuous stationary solutions (Theorem~\ref{DisExBan}), their linear stability (Theorem \ref{thm:ApplicationSystemStabilityDiscontinuous2}) and nonlinear stability (Theorem~\ref{thm:ApplicationSystemStabilityDiscontinuous}). In Section \ref{sec:DiscontinuousSolution}, we construct jump-discontinuous stationary solutions. Since stability of stationary solutions is obtained using the linearisation procedure, in Section \ref{sec:LinearEquation}, we study stability of zero solution to general linear reaction-diffusion-ODE systems. In Section~\ref{sec:Nonlinear Stability}, we prove that the linear stability implies the nonlinear stability where this classical approach has to be suitably adapted to reaction-diffusion-ODE systems. The proofs of stability Theorems \ref{thm:ApplicationSystemStabilityDiscontinuous2} and \ref{thm:ApplicationSystemStabilityDiscontinuous} are contained in Section \ref{sec:Application}. The obtained theory is illustrated \rev{in} Section \ref{sec:Examples} by numerical simulations of a selected reaction-diffusion-ODE system showing emergence of the constructed patterns.

\subsection*{Notation.} 

By the bold font, e.g.\ $\pmb A, \pmb u,$ we denote either matrices or vector valued functions in order to distinguish them from scalar quantities. 
For $\pmb f$ and $g$ defined in~\eqref{nonlinearity} we set 
\begin{nalign}
	\pmb f_{\pmb u} &= \begin{pmatrix}
		\frac{\partial f_1}{\partial u_1}  & \cdots & \frac{\partial f_1}{\partial u_n}  \\
		\vdots & & \vdots \\
		\frac{\partial f_n}{\partial u_1}  & \cdots & \frac{\partial f_n}{\partial u_n}  \\
	\end{pmatrix}, & 	
	\pmb f_{v} &= 
	\begin{pmatrix}
		\frac{\partial f_1}{\partial v}  \\
		\vdots  \\
		\frac{\partial f_n}{\partial v} \\
	\end{pmatrix}, \;  
	g_{\pmb u} &= 
	\begin{pmatrix}
		\frac{\partial g}{\partial u_1}  & \cdots & \frac{\partial g}{\partial u_n}  \\
	\end{pmatrix}, & 	\; 
	g_{v} &= \frac{\partial g}{\partial v} .
\end{nalign} 
For a matrix $M= (m_{i,j})$, we set $|M| = \sqrt{\sum_{i,j} m_{i,j}^2}$. The product $Y^n = {Y\times \cdots \times Y}$ ($n$-times)  of a given space $Y$ is supplemented with the norm denoted by $\|\pmb  y\|_Y$  (instead of $\| \pmb y \|_{Y^n}$). For $\Omega \subseteq \R^N$, we denote by $\overline{\Omega}$ its topological closure. The symbol  $\sigma(L)$ means the spectrum of a linear operator $\big( L, D(L)\big)$ and $s(L) = \sup\lbrace \Re \lambda : \lambda \in \sigma(L) \rbrace$ is the  spectral bound of $\big( L, D(L)\big)$. The usual Sobolev spaces are denoted by $W^{k,p}(\Omega)$. We use the symbol  $\Delta_\nu$ for the Laplace operator in a bounded domain $\Omega$ with the Neumann boundary condition. It is defined in the usual way via a bilinear form on $W^{1,2}(\Omega)$, and $-\Delta_\nu$ has the eigenvalues $\mu_k$  satisfying $0 = \mu_0 < \mu_1 \leqslant \cdots \leqslant \mu_n \to \infty$. We recall the domain of $-\Delta_\nu$ on $\Lp{}$ with $p\in [1,\infty)$ 
\begin{nalign}
	W^{2,p}_\nu(\Omega) = \{ v\in W^{2,p}(\Omega): \, \partial_{\nu} v = 0 \text{ on } \partial \Omega \},
\end{nalign} 
which is the Sobolev space $W^{2,p}(\Omega)$ supplemented with the Neumann boundary condition. Constants in  estimates below are denoted by the same letter $C$, even if they vary from line to line. Sometimes we shall emphasize dependence of such constants on parameters used in our calculations.

\section{Main results} 

\subsection{Stationary solutions}

The goal of this work is to construct a certain class of non-constant stationary solutions of problem \eqref{eq1}-\eqref{Neumann} and to show their stability. Thus, we deal with a couple  $(\pmb U, V) = \big( \pmb U(x), V(x) \big)$ 
with a vector field and a scalar function
\begin{nalign}
	 \pmb U (x) = 
	\left( 
	\begin{array}{c}
		U_1(x) \\ 
		\vdots \\
		U_n(x)
	\end{array}
	\right) 
	\qquad\text{and}\qquad 
	V = V(x),
\end{nalign}
which is a solution 
to the boundary value problem
\begin{nalign}
	\label{DisProbDef}
	\pmb f(\pmb U,V)&=0,  &&     &&x\in\overline{\Omega},  &&\\
	\gamma \Delta_\nu V+g(\pmb U,V)&=0,  &&  &&x\in \Omega,
\end{nalign}
with arbitrary $C^2$-functions $\pmb f$ and $g$ of the form \eqref{nonlinearity}, with constant $\gamma>0$, and in a bounded domain  
$\Omega\subseteq \R^N $ with a $C^2$-boundary.

\begin{defi}\label{def:stat}
	A pair $\UV = \UVx$ 
	is 
	a {\it weak solution} of problem   \eqref{DisProbDef} 
	if 
	\begin{itemize}
		\item$\pmb U$ is measurable, 
		\item $V\in \W12$,
		\item $g(\pmb U, V) \in \big(\W12\big)^\star$ $($the dual of $\W12)$,    
		\item the equation $\pmb f\big(\pmb U(x),V(x)\big)=0$ is satisfied for almost all $x\in \overline{\Omega}$,
		\item the equation 
		\begin{nalign}
			-\gamma\int_\Omega \nabla V(x)\cdot \nabla \varphi(x)\dx +\int_\Omega 		g\big(\pmb U(x),V(x)\big)\varphi(x)\dx=0 
		\end{nalign}
		holds true for all test functions $\varphi\in \W12$.
	\end{itemize}
\end{defi}

\begin{rem}
	The most direct way to construct such stationary solutions is to solve first equation in \eqref{DisProbDef} with respect to $\pmb U$ and to substitute the result into the second equation to obtain an elliptic boundary value problem. Namely, we may assume that there exists  a $C^2$-function $\pmb k: \R\to \R^n$ such that $\pmb U(x)=\pmb k(V(x))$ for all $x\in \Omega$ and
	\begin{nalign}
		\pmb f\big(\pmb U(x),V(x)\big) = \pmb f\big(\pmb k(V(x)),V(x)\big)=0 \quad \text{for all}\quad x\in \Omega,
	\end{nalign}
	where $V=V(x)$ is a solution of  the elliptic Neumann problem  
	\begin{align}
		\gamma\Delta_\nu V+h(V)=0\qquad   \text{for}\quad x\in \Omega \label{s:h} 
	\end{align}
	with $h(V)=g\big(\pmb k(V),V\big)$. In our parallel work \cite{CMCKS01}, such solutions are called regular and we showed that all of them are unstable. 
\end{rem}

\begin{rem}
	\label{Rem:MatLin}
	Results in this work require that problem \eqref{DisProbDef} has a constant solution, namely,  a constant vector 
	\begin{nalign}
		(\overline{ \pmb U}, \overline{ V}) \in \R^{n+1} \quad\text{such that}\quad  \pmb f(\overline{\pmb U}, \overline{V}) = 0  \quad\text{and} \quad g(\overline{\pmb U}, \overline{V}) = 0.
	\end{nalign} In the following, we use the constant coefficient matrices
		\begin{nalign}
		\label{ass:Matrix}
		\pmb A_0 = \pmb f_{\pmb u}\left( \overline{\pmb U}, \overline{ V} \right),\quad  \pmb 	B_0 = \pmb f_{v}\left( \overline{\pmb U}, \overline{ V} \right), \quad \pmb C_0 =g_{\pmb u}\left( \overline{\pmb U}, \overline{ V} \right), \quad d_0 = g_v\left( \overline{\pmb U}, \overline{ V} \right).
	\end{nalign}	
	On the other hand, if $\UV \in \Li^{n+1}$ is a non-constant  solution to problem~\eqref{DisProbDef} we introduce the matrices
	\begin{align}
		\retainlabel{eq:ReactionDiffusionABCDMatrices}
		\pmb A =  \pmb f_{\pmb u} ({\pmb U},{ V}), \quad  \pmb B =  \pmb f_{v} ({\pmb U},{ V}), \quad 
		\pmb C =  g_{\pmb u} ({\pmb U},{ V}), \quad
		d =   g_{v} ({\pmb U},{ V})
	\end{align}
	with coefficients from $\Li$.
\end{rem}

\subsection{Existence  of discontinuous stationary solutions }

We begin by imposing assumptions required in our construction of solutions to  boundary value problem~\eqref{DisProbDef} which are not regular, \textit{i.e.}\ which are not \rev{necessarily} $C^2$-functions. 

\begin{assumption}
	\label{ass:StatSol}
	The vector $(\overline{\pmb U}, \overline{V})\in \R^{n+1}$ is a constant solution of problem \eqref{DisProbDef} with the corresponding matrices \eqref{ass:Matrix} satisfying
	\begin{nalign}
		\label{Sineq}
		\label{Impil}
		\det \pmb A_0 \neq 0 \quad \text{and} \quad
		\frac{1}{\det  \pmb A_0 }\det\begin{pmatrix}
			\pmb A_0 &    \pmb B_0 \\
			\pmb C_0  &  d_0
		\end{pmatrix} \neq \gamma\mu_k,
	\end{nalign}
	for each eigenvalue $\mu_k$ of $-\Delta_\nu$.	
\end{assumption}

Notice that if $\det \pmb A_0 \neq 0$, by the Implicit mapping theorem, there exist an open neighbourhood $\V \subseteq \R$ of $\overline{ V}$ and a function $\pmb k \in C^2(\V,\R^{n})$ such that
\begin{nalign}
	\pmb k(\overline{V}) = \overline{\pmb U} \quad \text{and} \quad \pmb f(\pmb k(w),w) = 0\quad \text{for all } w\in \V.
\end{nalign}
In the following assumption,  the equation $\pmb f(\pmb U, V) = 0$ is required to have  \textit{two different branches of solutions} with respect to $\pmb U$ on a common domain $\V$.

\begin{assumption}
	\label{ass:DiscontinuousStationaryTwoBranches}
	Let $\big(\overline{ \pmb U}, \overline{ V}\big)$ be a constant  solution of problem \eqref{DisProbDef}. We assume that there exists an open set $\V \subseteq \R$ and $\pmb k_1, \pmb k_2 \in C^2(\V, \R^n)$ such that
	\begin{itemize}
		\item $ \overline{ V} \in \V$,
		\item $\overline{ \pmb U} = \pmb k_1 (\overline{ V}), \quad\pmb k_1 (\overline{ V} ) \neq \pmb k_2(\overline{ V})$, 
		\item $\rev{\pmb f\big(\pmb k_1(w),w\big) = \pmb f\big(\pmb k_2(w),w\big)} = 0 \ \text{for all} \ w\in \V.$
	\end{itemize}	
\end{assumption}

Such two branches allow us to construct discontinuous solution of problem \eqref{DisProbDef}.

\begin{theorem}[Existence of solutions]
	\label{DisExBan}
	Assume that
	\begin{itemize}
		\item problem \eqref{DisProbDef} with $\gamma>0$ has a constant solution $(\overline{  \pmb U}, \overline{ V}) \in \R^{n+1}$ satisfying Assumption \ref{ass:StatSol},
		\item \rev{Assumption} \ref{ass:DiscontinuousStationaryTwoBranches} holds true.
	\end{itemize}
	\rev{There is $\varepsilon_0 >0$ such that for all $\varepsilon \in (0, \,\varepsilon_0)$ there exists $\delta >0$ such that for arbitrary open subset $\Omega_1\subseteq\Omega$ with $\Omega_2 = \Omega \setminus \overline{\Omega_1}$ satisfying $|\Omega_2|< \delta$,} problem \eqref{DisProbDef} has a weak solution in the sense of Definition \ref{def:stat} with the following properties:
	\begin{itemize}
		\item $(\pmb U, V)\in L^\infty(\Omega)^n \times \W2p$ for each \rev{$p\in[2,\infty) \cap (N/2,\infty)$},
		\item $V=V(x)$ is a weak solution to problem,
		\begin{nalign}
			\label{eq:DinsontinousSolutionsVEquation}
			\gamma\Delta_\nu V + g(\pmb U,V) = 0 \quad \text{for} \quad x\in \Omega,
		\end{nalign}
		where
		\begin{nalign}
			\label{Uform}
			\pmb U (x) = \begin{cases}
				\pmb k_1\big(V(x)\big), \quad x\in \Omega_1, \\
				\pmb k_2\big(V(x)\big), \quad x\in \Omega_2,
			\end{cases}
		\end{nalign} 
		satisfies $\pmb f\big(\pmb U(x),V(x)\big) = 0$ for almost all $x\in \overline{\Omega}$.
		\item the couple $\UV$ \rev{stays close to the points 
		$\big(\overline{ \pmb U}, \overline{ V}\big)$  and 
		$\big( \pmb k_2 (\overline{ V}), \overline{ V}\big)$
		in the following sense {\rm (}recall that $\overline{ \pmb U}= \pmb k_1 (\overline{ V})${\rm )}:}
		\begin{nalign}
			\label{DisVar}
			\|V - \overline{ V} \|_\Li < \varepsilon \quad \text{and} \quad \|\pmb U -  \pmb k_1 (\overline{ V}) \|_{L^\infty(\Omega_1)} + \|\pmb U -  \pmb k_2 (\overline{ V}) \|_{L^\infty(\Omega_2)}  < C\varepsilon
		\end{nalign}
		for a constant $C = C(\pmb f,  g)$. 
	\end{itemize}
\end{theorem}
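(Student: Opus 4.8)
The plan is to eliminate $\pmb U$ from problem \eqref{DisProbDef} and to solve the resulting scalar elliptic equation for $V$ by a contraction mapping argument in a small ball around the constant state $\overline V$. Inserting \eqref{Uform} into \eqref{eq:DinsontinousSolutionsVEquation} reduces the system to $\gamma\Delta_\nu V+H(\cdot,V)=0$ for $V$ alone, where $H(x,w):=h_2(w)$ for $x\in\Omega_2=\Omega\setminus\overline{\Omega_1}$ and $H(x,w):=h_1(w)$ otherwise, with $h_i(w):=g\big(\pmb k_i(w),w\big)$ and $w$ restricted to a neighbourhood of $\overline V$ inside $\V$. Because $h_1(\overline V)=g(\overline{\pmb U},\overline V)=0$, the constant $\overline V$ already solves this equation when $\Omega_2=\varnothing$, so for $|\Omega_2|$ small we seek a perturbation. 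The key algebraic observation is that, differentiating $\pmb f\big(\pmb k_1(w),w\big)=0$ at $w=\overline V$ and using $\det\pmb A_0\neq0$, we get $\pmb k_1'(\overline V)=-\pmb A_0^{-1}\pmb B_0$ and hence $h_1'(\overline V)=d_0-\pmb C_0\pmb A_0^{-1}\pmb B_0=\tfrac{1}{\det\pmb A_0}\det\begin{spm}\pmb A_0&\pmb B_0\\ \pmb C_0&d_0\end{spm}$ by the Schur complement formula. Thus, writing $a:=h_1'(\overline V)$, the second part of Assumption~\ref{ass:StatSol} states precisely that $a\neq\gamma\mu_k$ for every eigenvalue $\mu_k$ of $-\Delta_\nu$; since $-\Delta_\nu$ has compact resolvent with spectrum $\{\mu_k\}_{k\ge0}$, this places $0$ in the resolvent set of $\gamma\Delta_\nu+a$, making $L:=\gamma\Delta_\nu+a\colon\W{2}{p}\to\Lp{}$ an isomorphism for every $p\in(1,\infty)$.

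Next we would fix $p\in[2,\infty)\cap(N/2,\infty)$, so that $X:=\W{2}{p}$ embeds continuously into $C(\overline\Omega)$, say with constant $C_{\mathrm{emb}}$, write $V=\overline V+w$, and recast the scalar equation as the fixed-point problem $w=\Phi(w)$ on a closed ball $\overline B_\rho\subset X$, where $\Phi(w):=L^{-1}\big(a\,w-H(\cdot,\overline V+w)\big)$ (the radius $\rho$ chosen so that $\overline V+w\in\V$ for $w\in\overline B_\rho$). On $\Omega\setminus\Omega_2$ the argument of $L^{-1}$ equals $a\,w-h_1(\overline V+w)=O\big(\|w\|_{C(\overline\Omega)}^2\big)$ by Taylor's theorem, using $h_1(\overline V)=0$ and $h_1'(\overline V)=a$; on $\Omega_2$ it is only bounded, $|a\,w-h_2(\overline V+w)|\le C\big(1+\|w\|_{C(\overline\Omega)}\big)$, but supported on a set of measure $<\delta$, giving
\begin{nalign}
	\big\|\,a\,w-H(\cdot,\overline V+w)\,\big\|_{\Lp{}}\;\le\;C\Big(\|w\|_{C(\overline\Omega)}^2+\delta^{1/p}\big(1+\|w\|_{C(\overline\Omega)}\big)\Big)
\end{nalign}
with $C$ independent of $\rho,\delta,\Omega_1$, and, since $h_1,h_2\in C^1$, an analogous bound for $\Phi(w_1)-\Phi(w_2)$ with Lipschitz constant $\le C\big(\rho+\delta^{1/p}\big)$ on $\overline B_\rho$. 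Choosing first $\rho>0$ small enough that $C\|L^{-1}\|C_{\mathrm{emb}}^2\rho\le\tfrac12$ and the Lipschitz constant is $<1$, and then $\delta>0$ small enough that $C\|L^{-1}\|\delta^{1/p}\big(1+C_{\mathrm{emb}}\rho\big)\le\tfrac{\rho}{2}$, we make $\Phi$ a contraction of $\overline B_\rho$ into itself, and its unique fixed point gives $V:=\overline V+w\in\W{2}{p}$ solving \eqref{eq:DinsontinousSolutionsVEquation} with $\pmb U$ as in \eqref{Uform}. Since then $g(\pmb U,V)=H(\cdot,V)\in\Li$, elliptic regularity upgrades $V$ to $\W{2}{q}$ for every finite $q$, and an integration by parts shows that $(\pmb U,V)$ is a weak solution in the sense of Definition~\ref{def:stat}: $\pmb U$ is measurable (a piecewise composition of continuous and measurable maps) and bounded, $V\in\W{1}{2}$, $g(\pmb U,V)\in\Li\subset(\W{1}{2})^\star$, and $\pmb f(\pmb U,V)=0$ a.e.\ because $\pmb f\big(\pmb k_i(w),w\big)\equiv0$ on $\V$ by Assumption~\ref{ass:DiscontinuousStationaryTwoBranches}.

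Finally we would arrange the quantifiers and derive \eqref{DisVar}. Let $\rho_0>0$ be a threshold radius satisfying all the smallness conditions above together with $\overline V+[-C_{\mathrm{emb}}\rho_0,\,C_{\mathrm{emb}}\rho_0]\subset\V$, and set $\varepsilon_0:=C_{\mathrm{emb}}\rho_0$. Given $\varepsilon\in(0,\varepsilon_0)$, run the construction with $\rho:=\varepsilon/(2C_{\mathrm{emb}})\in(0,\rho_0)$, which fixes $\delta=\delta(\varepsilon)>0$; then for any open $\Omega_1\subseteq\Omega$ with $|\Omega\setminus\overline{\Omega_1}|<\delta$ the resulting solution obeys $\|V-\overline V\|_\Li\le C_{\mathrm{emb}}\|w\|_X\le C_{\mathrm{emb}}\rho=\varepsilon/2<\varepsilon$, while on $\Omega_i$ ($i=1,2$), using $\pmb k_i\in C^1(\V)$, $\|\pmb U-\pmb k_i(\overline V)\|_{L^\infty(\Omega_i)}=\|\pmb k_i(V)-\pmb k_i(\overline V)\|_{L^\infty(\Omega_i)}\le\sup\big\{|\pmb k_i'(w)|:|w-\overline V|\le\varepsilon_0\big\}\,\|V-\overline V\|_\Li\le C\varepsilon$ (the supremum finite since it is taken over a compact subset of $\V$), which is \eqref{DisVar}. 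The only genuinely delicate points are bookkeeping ones: the choice of $\rho$ must precede that of $\delta$ and both must precede the choice of $\Omega_1$, and the constants entering \eqref{DisVar} must be verified to be independent of $\Omega_1$ — which they are, since $L$, $C_{\mathrm{emb}}$ and the branches $\pmb k_1,\pmb k_2$ do not depend on $\Omega_1$, and $\Omega_1$ influences the construction only through the small number $|\Omega\setminus\overline{\Omega_1}|$; in particular the $x$-discontinuity of $H$ is harmless, entering the $L^p$ estimates only through the factor $\delta^{1/p}$.
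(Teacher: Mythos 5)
Your proposal is correct and follows essentially the same route as the paper: both reduce to the scalar equation for $V$, use the determinant identity to see that Assumption~\ref{ass:StatSol} makes $\gamma\Delta_\nu+h_1'(\overline V)$ invertible on $L^p$, and run a Banach fixed point with a quadratic Taylor estimate on $\Omega_1$ and an $|\Omega_2|^{1/p}$-small bounded term on $\Omega_2$. The only cosmetic difference is that the paper extends $\pmb k_1,\pmb k_2$ to $C^2_b(\R)$ and verifies a posteriori that the solution stays where the extensions agree with the originals, whereas you keep the fixed-point ball small enough that $\overline V+w$ remains in $\V$ throughout; both are fine.
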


\begin{rem}
	\label{rem:Conti}
	In particular, by the Sobolev embedding, we have $V\in C(\oo)$. On the other hand, the function $\pmb U = \pmb U(x)$ given by formula \eqref{Uform} is discontinuous when $\pmb k_1(w) \neq \pmb k_2(w)$ for all $w \in \V$ which holds true when $\varepsilon>0$ in inequalities~\eqref{DisVar} is sufficiently small.
\end{rem}

\subsection{Stability of discontinuous stationary solutions}

Let $\big( \pmb U, V \big) \in \Li^{n+1}$ be a stationary solution of problem \eqref{eq1}-\eqref{nonlinearity}. We linearise problem \eqref{eq1}-\eqref{nonlinearity} around $\UV$ by introducing the new functions
\begin{nalign}
	\pmb \varphi = \pmb u - \pmb U \quad \text{and} \quad \psi = v - V
\end{nalign}
which satisfy the system
\begin{nalign}
	\label{eq:ReactionDiffusionLinearisedProblemWithRest}
	\frac{\partial}{\partial t}\begin{pmatrix} \pmb \varphi \\ \psi \end{pmatrix} = \begin{pmatrix} 0 \\ \gamma\Delta_\nu \psi \end{pmatrix} + \begin{pmatrix} \pmb A & \pmb B \\ \pmb C & d \end{pmatrix} \begin{pmatrix} \pmb \varphi \\ \psi \end{pmatrix} + \begin{pmatrix} \pmb R_1(\pmb \varphi, \psi) \\ R_2(\pmb \varphi, \psi) \end{pmatrix}
\end{nalign}
with the matrices \eqref{eq:ReactionDiffusionABCDMatrices} and with the usual remainders $\pmb R_1$, $R_2$ obtained from the Taylor expansion. We also impose the initial datum
\begin{nalign}
	\label{eq:LinIni}
	\pmb \varphi(\cdot, 0) = \pmb u_0 - \pmb U, \quad \psi(\cdot,0) = v_0 - V,
\end{nalign}
for some $\pmb u_0\in \Li^n$, $v_0\in C(\oo)$.

We begin by an analysis of a linear problem and say that $\UV$ is {\it linearly exponentially  stable} if the zero solution to system \eqref{eq:ReactionDiffusionLinearisedProblemWithRest} with the remainders $\pmb R_1 = R_2 = 0$ is exponentially stable. Our stability result requires additional assumptions imposed on the constant solution $(\overline{ \pmb U}, \overline{V})\in \R^{n+1}$ used in Theorem \ref{DisExBan}. 

\begin{assumption}
	\label{ass:LinearStability}
	Matrices \eqref{ass:Matrix} satisfy 
	\begin{nalign}
		s(\pmb A_0)<0 \quad \text{and} \quad s\begin{pmatrix} \pmb A_0 & \pmb B_0 \\ \pmb C_0 & d_0 \end{pmatrix} < 0,
	\end{nalign} 
	which means that both matrices have all eigenvalues with strictly negative real parts. 
\end{assumption}

\begin{assumption} 
	\label{ass:LinearStabilityGamma}
	For fixed $p\in(1,\infty)$, the diffusion coefficient  satisfies $\gamma \geqslant  \gamma_0$ where $\gamma_0 = \gamma_0(\pmb A_0, \pmb B_0, \pmb C_0, d_0, |\Omega|, p)>0$ is a constant provided by Lemma~\ref{thm:LinearSystemEstimatesForUVByFG}, below.
\end{assumption}

\begin{rem}
	Assumptions \ref{ass:LinearStability} and \ref{ass:LinearStabilityGamma} are needed to show stability of a zero solution of the following linear system with constant coefficients
	\begin{nalign}
		\label{eq:BCon}
		\frac{\partial}{\partial t}\begin{pmatrix} \pmb \varphi \\ \psi \end{pmatrix} = \begin{pmatrix} 0 \\ \gamma\Delta_\nu \psi \end{pmatrix} + \begin{pmatrix} \pmb A_0 & \pmb B_0 \\ \pmb C_0 & d_0 \end{pmatrix} \begin{pmatrix} \pmb \varphi \\ \psi \end{pmatrix}.
	\end{nalign}
	Obviously, for $s\begin{spm} \pmb A_0 & \pmb B_0 \\ \pmb C_0 & d_0, \end{spm} < 0$, zero is a stable solution to this system with $\gamma = 0$. If $s(\pmb A_0)>0$, zero solution of system \eqref{eq:BCon} with arbitrary $\gamma >0$ is unstable which results from the characterization of the spectrum of the linear operator in \eqref{eq:BCon} obtained in \cite[Thm. 4.5]{CMCKS01} and recalled below in equation \eqref{thm:SpectrumOfOperatorL}. Thus, for stability results, we have to impose the assumption $s(\pmb A_0)<0$. Finally, $\gamma>0$ has to be large enough in Lemma~\ref{thm:LinearSystemEstimatesForUVByFG} in order to obtain an exponential stability of solutions to system \eqref{eq:BCon}. 
\end{rem}

\begin{theorem}[Linear stability]
	\label{thm:ApplicationSystemStabilityDiscontinuous2}
	Let the assumptions of Theorem \ref{DisExBan} holds true. Fix $p\in (1,\infty)$. The stationary solution $\UV$ constructed in Theorem \ref{DisExBan} is linearly exponentially stable in $\Lp{}^{n+1}$ if moreover
	\begin{itemize}
		\item Assumptions \ref{ass:LinearStability} and \ref{ass:LinearStabilityGamma} are valid,
		\item $\varepsilon>0$ and $|\Omega_2|>0$ are small enough in Theorem \ref{DisExBan},
		\item the following inequality is satisfied
		\begin{nalign}
			\label{eq:SecBranch}
			s\big( \pmb f_{\pmb u}  (\pmb k_2(\overline{V}), \, \overline{ V} ) \big) < 0.
		\end{nalign}
	\end{itemize}
\end{theorem}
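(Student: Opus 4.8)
The plan is to locate the spectrum of the linearised operator
\[
\mathcal L=\begin{pmatrix}0&0\\0&\gamma\Delta_\nu\end{pmatrix}+\begin{pmatrix}\pmb A&\pmb B\\\pmb C&d\end{pmatrix}
\]
on $L^p(\Omega)^{n+1}$, with the coefficients \eqref{eq:ReactionDiffusionABCDMatrices} evaluated at the stationary solution from Theorem~\ref{DisExBan}, to show that its spectral bound is strictly negative, and then to upgrade $s(\mathcal L)<0$ to exponential stability by means of the general linear theory of Section~\ref{sec:LinearEquation}. First I would invoke the spectral characterisation of \cite[Thm.~4.5]{CMCKS01}, recalled in \eqref{thm:SpectrumOfOperatorL}: the spectrum splits into an essential part $\Sigma_{\pmb A}=\{\lambda\in\C:\det(\pmb A(x)-\lambda I)=0\text{ for some }x\in\overline\Omega\}$, produced by the ODE block, and a point part consisting of those $\lambda\notin\Sigma_{\pmb A}$ for which the scalar elliptic problem
\[
\gamma\Delta_\nu\psi+\big(d(x)-\pmb C(x)(\pmb A(x)-\lambda I)^{-1}\pmb B(x)-\lambda\big)\psi=0,\qquad\psi\in W^{2,p}_\nu(\Omega),
\]
has a nontrivial solution (note $\pmb C(\pmb A-\lambda I)^{-1}\pmb B$ is scalar, since $\pmb C$ is a row and $\pmb B$ a column vector). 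Neither this condition nor $\Sigma_{\pmb A}$ depends on $p$ — by elliptic regularity any such eigenfunction lies in $W^{2,q}_\nu(\Omega)\hookrightarrow L^\infty(\Omega)$ for every finite $q$ — so for the spectral analysis I may fix $p>N/2$ whenever convenient.

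The first step is to locate $\Sigma_{\pmb A}$. By \eqref{DisVar} and the $C^2$ regularity of $\pmb f,\pmb k_1,\pmb k_2$, the matrix $\pmb A(x)=\pmb f_{\pmb u}(\pmb U(x),V(x))$ lies within $O(\varepsilon)$ of $\pmb A_0$ on $\Omega_1$ and within $O(\varepsilon)$ of $\pmb f_{\pmb u}(\pmb k_2(\overline V),\overline V)$ on $\Omega_2$. Since $s(\pmb A_0)<0$ by Assumption~\ref{ass:LinearStability} and $s(\pmb f_{\pmb u}(\pmb k_2(\overline V),\overline V))<0$ by hypothesis~\eqref{eq:SecBranch}, continuity of the spectrum under matrix perturbation provides $\beta_1>0$ such that $\Re\lambda\le-\beta_1$ for every $\lambda\in\Sigma_{\pmb A}$, once $\varepsilon$ is small enough. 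In particular, for any fixed $\omega\in(0,\beta_1)$ the matrix $\pmb A(x)-\lambda I$ is invertible with inverse bounded uniformly in $x\in\overline\Omega$ and in $\lambda$ with $\Re\lambda\ge-\omega$, so the reduced coefficient $\pmb M(\lambda,x):=d(x)-\pmb C(x)(\pmb A(x)-\lambda I)^{-1}\pmb B(x)$ is a well-defined function in $L^\infty(\Omega)$, analytic in $\lambda$ on $\{\Re\lambda>-\beta_1\}$; this is precisely where \eqref{eq:SecBranch} enters, keeping everything under control on $\Omega_2$.

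The second and main step is to rule out point spectrum in a half-plane $\{\Re\lambda\ge-\omega\}$, $\omega\in(0,\beta_1)$. From Assumptions~\ref{ass:LinearStability} and \ref{ass:LinearStabilityGamma} combined with Lemma~\ref{thm:LinearSystemEstimatesForUVByFG}, the constant-coefficient operator $\mathcal L_0$ (coefficients $\pmb A_0,\pmb B_0,\pmb C_0,d_0$) satisfies $s(\mathcal L_0)<0$; shrinking $\omega$ I may then assume that the constant-coefficient reduced resolvent $\mathcal R_0(\lambda):=(\gamma\Delta_\nu+\pmb M_0(\lambda)-\lambda)^{-1}$, with $\pmb M_0(\lambda)=d_0-\pmb C_0(\pmb A_0-\lambda I)^{-1}\pmb B_0$, exists as a bounded map $L^p(\Omega)\to W^{2,p}_\nu(\Omega)$ with norm bounded uniformly over $\{\Re\lambda\ge-\omega\}$ (for large $|\lambda|$ this follows from $\pmb M_0(\lambda)\to d_0$ together with the sectorial estimates for $\gamma\Delta_\nu-\lambda$). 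Assuming, for contradiction, that $\psi\neq0$ solves the variable-coefficient reduced equation for some $\lambda$ with $\Re\lambda\ge-\omega$, and writing $E(\lambda,x):=\pmb M(\lambda,x)-\pmb M_0(\lambda)$, one gets $\psi=-\mathcal R_0(\lambda)\big[E(\lambda,\cdot)\psi\big]$. The perturbation $E(\lambda,\cdot)$ is $O(\varepsilon)$ on $\Omega_1$ but merely bounded on $\Omega_2$, a set with $|\Omega_2|<\delta$; taking $p>N/2$ and using $W^{2,p}_\nu(\Omega)\hookrightarrow L^\infty(\Omega)$, I would estimate
\[
\|\psi\|_{L^\infty}\le\|\mathcal R_0(\lambda)\|_{L^p\to W^{2,p}}\,\|E(\lambda,\cdot)\psi\|_{L^p}\le C\big(\varepsilon\,\|\psi\|_{L^p}+|\Omega_2|^{1/p}\|\psi\|_{L^\infty}\big)\le C\big(\varepsilon+\delta^{1/p}\big)\|\psi\|_{L^\infty},
\]
so that for $\varepsilon$ and $|\Omega_2|$ small enough $C(\varepsilon+\delta^{1/p})<1$, forcing $\psi=0$ — a contradiction. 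Hence $\sigma(\mathcal L)\cap\{\Re\lambda\ge-\omega\}=\emptyset$, and together with the first step $s(\mathcal L)\le-\min(\beta_1,\omega)<0$. I expect this step to be the main obstacle: the coefficient perturbation on $\Omega_2$ is of order one in $L^\infty$, not small, and it is only the elimination of $\pmb\varphi$ — legitimate on $\{\Re\lambda\ge-\omega\}$ by the first step — together with the $L^p\to L^\infty$ smoothing of the scalar elliptic resolvent that turns its small support into a genuinely small perturbation.

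It remains to upgrade $s(\mathcal L)<0$ to exponential stability in $L^p(\Omega)^{n+1}$. As the semigroup generated by $\mathcal L$ is not analytic, this is not immediate, but the strict bound $\sup_{\lambda\in\Sigma_{\pmb A}}\Re\lambda<0$ from the first step governs the non-diffusing part of the flow, while the resolvent of $\mathcal L$ on $\{\Re\lambda\ge-\omega\}$ is controlled by combining the uniform bound on $(\pmb A(x)-\lambda I)^{-1}$ from the first step with the scalar elliptic analysis of the second; this places us in the situation treated by the linear stability results of Section~\ref{sec:LinearEquation}, from which the conclusion follows.
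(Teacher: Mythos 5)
Your proposal is correct in substance and follows the same overall strategy as the paper: decompose $\sigma(\L_p)$ via \eqref{thm:SpectrumOfOperatorL}, push the multiplication-operator part into a left half-plane using Assumption~\ref{ass:LinearStability}, condition~\eqref{eq:SecBranch} and the closeness estimates \eqref{DisVar}, and then exclude point spectrum by a perturbation argument around the constant-coefficient operator that exploits the $O(\varepsilon)$-closeness of the coefficients on $\Omega_1$ and the smallness of $|\Omega_2|$. Where you genuinely diverge is in the implementation of the perturbation step. The paper (Theorem~\ref{thm:ResEsNonCon}) stays at the level of the full system: it rewrites the eigenvalue problem as the constant-coefficient problem \eqref{eq:LinearSystemEstimatesForUVAverages} with a right-hand side $(\pmb A-\pmb A_0)\pmb\varphi+(\pmb B-\pmb B_0)\psi$, etc., invokes the $\lambda$-uniform a priori estimate of Lemma~\ref{thm:LinearSystemEstimatesForUVByFG}, and absorbs the perturbation by measuring the coefficient differences in $L^N(\Omega)$ and using H\"older together with $W^{1,p}\hookrightarrow L^{Np/(N-p)}$; this is why the hypothesis of Theorem~\ref{thm:ResEsNonCon} is phrased as smallness in $L^N$, which the proof of Theorem~\ref{thm:ApplicationSystemStabilityDiscontinuous2} then checks via \eqref{P2VarNeigh}--\eqref{P2VarNeigh2}. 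You instead eliminate $\pmb\varphi$ first and perturb the reduced scalar elliptic equation, using $W^{2,p}_\nu(\Omega)\hookrightarrow L^\infty(\Omega)$ for $p>N/2$ to convert the order-one coefficient error on the small set $\Omega_2$ into an $L^p$-small perturbation. Both routes buy the same thing; yours is arguably more transparent about why a large but thinly supported perturbation is harmless, while the paper's version avoids having to control the reduced resolvent $\mathcal R_0(\lambda)$ and works for all $p\in(1,\infty)$ directly rather than via $p$-independence of the spectrum.

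Two caveats. First, the uniform bound on $\mathcal R_0(\lambda)=(\gamma\Delta_\nu+M_0(\lambda)-\lambda)^{-1}$ from $L^p$ to $W^{2,p}_\nu$ over the whole half-plane $\{\Re\lambda\geqslant-\omega\}$ is the load-bearing assertion of your argument and you only sketch it; it is true (for large $|\lambda|$ the elliptic estimate contributes a factor $1+|\lambda|$ that is compensated by the sectorial decay $\|\mathcal R_0(\lambda)\|_{L^p\to L^p}=O(|\lambda|^{-1})$, and on the remaining compact set one argues by continuity), but this is precisely the work that the paper packages once and for all into the $\lambda$-independent constant $K$ of Lemma~\ref{thm:LinearSystemEstimatesForUVByFG}, so you should not treat it as free. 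Second, your closing paragraph asserts that the semigroup generated by $\L_p$ is not analytic; in fact $\L_p$ is a bounded perturbation of $\operatorname{diag}(0,\gamma\Delta_\nu)$ and does generate an analytic semigroup on $\Lp{}^{n+1}$ (this is \cite[Prop.~4.1]{CMCKS01}, recalled in Section~\ref{sec:LinearizedSpectra}), so once $s(\L_p)<0$ is established the exponential decay of $e^{t\L_p}$ is immediate and no further appeal to Section~\ref{sec:LinearEquation} is needed. The vague final step of your proposal is therefore solving a nonexistent problem, but it does not invalidate the argument.
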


Theorem \ref{thm:ApplicationSystemStabilityDiscontinuous2} is proved in Section \ref{sec:Application} and it is a direct consequence of Theorem~\ref{thm:ResEsNonCon} below, containing a spectral analysis of a linear operator which appears in system \eqref{eq:ReactionDiffusionLinearisedProblemWithRest}.

\begin{rem} 
	\label{thm:AutoPoin}
	The solution $(\pmb U, V) \in \Li^n \times C(\oo)$ constructed in Theorem~\ref{DisExBan} can not be stable in $\Lp{}^{n+1}$ if
	\begin{nalign}
		\text{either} \quad s\big(\pmb f_{\pmb u}( \pmb k_1(\overline{V}), \, \overline{ V}) \big)>0\quad \text{or} 	\quad  s\big(\pmb f_{\pmb u} (\pmb k_2(\overline{V}), \, \overline{ V})\big)>0,
	\end{nalign}
	and if $\varepsilon >0$, $|\Omega_2| >0$ are sufficiently small. 	
	This follows immediately from the fact that eigenvalues of the matrix $\pmb f_{\pmb u} \big(\pmb U(x), V(x)\big)$ belong to $\sigma(\L_p)$ for almost every $x\in \Omega$, (see formulas \eqref{thm:SpectrumOfOperatorL} and \eqref{eq:SpecEq} below) and from inequalities \eqref{DisVar} ensuring that $\pmb f_{\pmb u} (\pmb U(x), V(x))$ stays close to $\pmb f_{\pmb u} (\pmb k_1(\overline{  V}), \overline{ V})$ on $\Omega_1$ and close to $\pmb f_{\pmb u} (\pmb k_2(\overline{  V}), \overline{ V})$ on $\Omega_2$. 
\end{rem}

\begin{theorem}[Nonlinear stability]
	\label{thm:ApplicationSystemStabilityDiscontinuous}
	Let the assumptions of Theorem \ref{thm:ApplicationSystemStabilityDiscontinuous2} hold true. Assume, moreover,
	\begin{nalign}
		\label{eq:Dcon}
		g_v\big( \pmb k_1 (\overline{V}), \overline{V}\big) < 0 \quad \text{ and } \quad 	 g_v\big( \pmb k_2 (\overline{V}), \overline{V}\big) < 0.
	\end{nalign}
	Then the stationary solution $\UV$ constructed in Theorem \ref{DisExBan} is exponentially asymptotically stable in the Lyapunov sense in~$\Li^{n+1}$ provided $\varepsilon>0$ in inequalities \eqref{DisVar} is small enough.
\end{theorem}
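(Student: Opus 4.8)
The plan is to deduce nonlinear stability from the linear stability established in Theorem~\ref{thm:ApplicationSystemStabilityDiscontinuous2} by the classical linearization argument, adapted to the mixed ODE--PDE structure. First I would recall from Section~\ref{sec:LinearEquation} that under the assumptions of Theorem~\ref{thm:ApplicationSystemStabilityDiscontinuous2} the linear operator
\[
\L \begin{pmatrix} \pmb\varphi \\ \psi \end{pmatrix} = \begin{pmatrix} 0 \\ \gamma\Delta_\nu\psi \end{pmatrix} + \begin{pmatrix} \pmb A & \pmb B \\ \pmb C & d \end{pmatrix}\begin{pmatrix} \pmb\varphi \\ \psi \end{pmatrix}
\]
generates a $C_0$-semigroup $(e^{t\L})_{t\ge 0}$ on $\Li^{n+1}$ (or on $\Lp{}^{n+1}$) whose spectral bound is negative, $s(\L) = -\omega < 0$, and — crucially, because the ODE part is only a bounded multiplication operator — the semigroup satisfies an honest exponential decay estimate $\|e^{t\L}\|_{\Li^{n+1}\to\Li^{n+1}} \le M e^{-\omega t}$. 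This decay in the sup-norm is exactly where the extra hypothesis \eqref{eq:Dcon}, $g_v(\pmb k_i(\overline V),\overline V) < 0$, is used: it guarantees that the $\psi$-component generator $\gamma\Delta_\nu + d$ (with $d$ close to a negative constant) is sectorial with negative bound on $\Li$, and together with $s(\pmb A_0) < 0$ and the construction it yields a uniform decay of the full semigroup on $\Li^{n+1}$, not merely on $\Lp{}^{n+1}$.

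Next I would write the nonlinear problem \eqref{eq:ReactionDiffusionLinearisedProblemWithRest} in Duhamel (mild) form,
\[
\begin{pmatrix} \pmb\varphi(t) \\ \psi(t) \end{pmatrix} = e^{t\L}\begin{pmatrix} \pmb\varphi(0) \\ \psi(0) \end{pmatrix} + \int_0^t e^{(t-s)\L}\begin{pmatrix} \pmb R_1(\pmb\varphi(s),\psi(s)) \\ R_2(\pmb\varphi(s),\psi(s)) \end{pmatrix}\ds,
\]
and estimate the remainders. Since $\pmb f, g \in C^2$, the Taylor remainders satisfy $\|\pmb R_1(\pmb\varphi,\psi)\|_\Li + \|R_2(\pmb\varphi,\psi)\|_\Li \le C\big(\|\pmb\varphi\|_\Li + \|\psi\|_\Li\big)^2$ whenever $\|\pmb\varphi\|_\Li + \|\psi\|_\Li$ stays bounded; here the bound $C$ depends on $\sup$-norm bounds for the second derivatives of $\pmb f,g$ near the range of $\UV$, which is controlled by $\varepsilon$ via \eqref{DisVar}. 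Setting $y(t) = \|\pmb\varphi(t)\|_\Li + \|\psi(t)\|_\Li$, the Duhamel formula and the semigroup estimate give
\[
y(t) \le M e^{-\omega t} y(0) + C M \int_0^t e^{-\omega(t-s)} y(s)^2 \ds.
\]
A standard continuation/bootstrap argument (Gronwall-type, or the usual fixed-point in the weighted space $\{ \sup_t e^{\omega t/2} y(t) < \infty\}$) then shows that if $y(0)$ is small enough — concretely $y(0) < \rho$ for some $\rho = \rho(M,\omega,C) > 0$ — then $y(t) \le 2M e^{-\omega t/2} y(0)$ for all $t \ge 0$, which is precisely exponential asymptotic stability in the Lyapunov sense in $\Li^{n+1}$.

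The main obstacle, and the point where the argument genuinely departs from the textbook semilinear-parabolic template, is the local well-posedness and the sup-norm decay of the linear semigroup in the presence of the purely algebraic (non-smoothing) ODE block. One cannot invoke parabolic smoothing in the $\pmb\varphi$-variables, so the fixed-point/continuation argument must be carried out in a space that does not require any gain of regularity in $\pmb\varphi$ — the norm $\Li^{n+1}$ (or $\Li^n \times C(\oo)$, matching the datum class in \eqref{eq:LinIni}) is the natural choice, and one must check that the Duhamel integral stays in this class: the $\psi$-component is handled by the analytic Neumann-Laplacian semigroup on $\Li$, while the $\pmb\varphi$-component is obtained by integrating a bounded-operator-valued expression, so it remains merely $L^\infty$ in $x$ and continuous in $t$, which suffices. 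I would handle this by quoting the local existence theory and the semigroup decay already developed in Sections~\ref{sec:LinearEquation}--\ref{sec:Nonlinear Stability} (where, as the introduction notes, ``this classical approach has to be suitably adapted to reaction-diffusion-ODE systems''), and then the remaining estimates are the routine quadratic-remainder bookkeeping sketched above, with all smallness quantified in terms of $\varepsilon$ through \eqref{DisVar}.
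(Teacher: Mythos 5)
Your overall architecture --- Duhamel formula, quadratic Taylor remainders controlled by Assumption \ref{ass:SquareEs}, local solvability in $\Li^n\times C(\oo)$, and a Gronwall/continuation bootstrap --- matches the paper's (Lemma \ref{thm:NonlinearStabilityClassicalEstimate} is exactly your final ODE-type inequality). The genuine gap is in the linear ingredient you take for granted: the claim that the linearized evolution obeys $\|e^{t\L}\|_{\Li^{n+1}\to\Li^{n+1}}\le Me^{-\omega t}$, justified by saying that $\gamma\Delta_\nu+d$ is sectorial with negative bound on $\Li$ and that $s(\pmb A_0)<0$. This does not work as stated. First, $\L$ does not generate a strongly continuous semigroup on $\Li^{n+1}$, and ``spectral bound $<0$ implies growth bound $<0$'' is only available on $\Lp{}$, where $\L_p$ is an analytic generator; sup-norm decay cannot be extracted from the $\Lp{}$ decay by interpolation or smoothing, because the $\pmb\varphi$-block has no regularization whatsoever. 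Second, your justification silently decouples the two diagonal blocks, whereas the off-diagonal coupling $\pmb B\psi$, $\pmb C\pmb\varphi$ is precisely what makes the sup-norm decay nontrivial: negativity of $s(\pmb A(\cdot))$ and of $s(\gamma\Delta_\nu+d)$ separately does not yield decay of the coupled system.

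The paper closes this gap by a chain of transfer estimates (Lemmas \ref{thm:NonlinearStabilityDuhamel}--\ref{thm:NonlinearStabilityPsiOmegaTylda}): (i) decay of $\|\pmb\varphi(t)\|_{\Lp{}}+\|\psi(t)\|_{\Lp{}}$ from the analytic semigroup $e^{t\L_p}$ with $s(\L_p)<0$; (ii) a pointwise-in-$x$ representation $\pmb\varphi(x,t)=e^{t\pmb A(x)}\pmb\varphi_0(x)+\int_0^t e^{(t-s)\pmb A(x)}\big(\pmb B\psi+\pmb R_1\big)\ds$ exploiting $s(\pmb A(\cdot))<0$; and (iii) the $\Lp{}$-to-$\Li$ smoothing bound $\|e^{t(\gamma\Delta_\nu+d)}\zeta\|_\Li\le C\,m_p(t)e^{-k_3t}\|\zeta\|_{\Lp{}}$ with $m_p(t)=\max\left(t^{-\frac{N}{2p}},1\right)$ and $p>N/2$, which is where $s(d(\cdot))<0$ (i.e.\ hypothesis \eqref{eq:Dcon} together with \eqref{DisVar} for small $\varepsilon$) actually enters: it lets one feed the $L^p$ bound on $\pmb C\pmb\varphi$ into an $L^\infty$ bound on $\psi$, which in turn controls $\pmb\varphi$ pointwise via (ii). You correctly sensed that \eqref{eq:Dcon} is needed for sup-norm decay, but the mechanism is this $L^p$-to-$L^\infty$ upgrade through the damped heat semigroup, not sectoriality of the decoupled diagonal blocks. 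Once that chain is established (equivalently, once the decay you asserted is actually proved), the remainder of your argument goes through as written.
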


\begin{rem}
	Theorem \ref{thm:ApplicationSystemStabilityDiscontinuous} is a direct consequence of Theorem \ref{thm:NonlinearStability} containing a proof of the so-called \textit{linearisation principle}, \rev{where} a linear exponential stability of zero solution of system \eqref{eq:ReactionDiffusionLinearisedProblemWithRest} implies a nonlinear exponential stability. To adapt this well known procedure to reaction-diffusion-ODE systems, we have to deal with certain obstacles. For example, the linearised operator may have a continuous spectrum without a spectral gap, see equation  \eqref{thm:SpectrumOfOperatorL} and we work in $\Li$ where the linearised operator does not generate strongly continuous semigroup of linear operators.  Here, the nonlinear stability in $\Li$ is obtained by using the linear exponential stability in $\Lp{}$ with $p\in (1, \infty)$ from Theorem~\ref{thm:ApplicationSystemStabilityDiscontinuous2}.
\end{rem} 

\begin{rem}
	\label{thm:3Branches}
	Other discontinuous stationary solutions can be constructed under following more general version of Assumption \ref{ass:DiscontinuousStationaryTwoBranches}, where we postulate an existence of a set $\V\subseteq \R$ and different branches $\pmb k_1, \cdots, \pmb k_J \in C(\V, \R^n)$ of solutions to equation $\pmb f( \pmb U, V) = 0$.  Then for an arbitrary decomposition 
	\begin{nalign}
		\Omega = {\bigcup_{i\in \lbrace 1,\cdots, J \rbrace} \Omega_i}, \quad \Omega_i \cap \Omega_j =\emptyset \ \ \text{for} \ \ i\neq j \quad \text{and} \quad |\Omega_i| <\delta \ \ \text{for} \ \ i\in\lbrace 2, \cdots, J\rbrace,  
	\end{nalign} 
	we can construct a discontinuous stationary solution of the form
	\begin{nalign}
		\label{Uformn}
		\pmb U (x) = \begin{cases}
			\pmb k_1\big(V(x)\big), \quad x\in \Omega_1, \\
			\qquad \vdots \\
			\pmb k_J\big(V(x)\big), \quad x\in \Omega_J,
		\end{cases}
	\end{nalign} 
	using the same reasoning as in Theorem \ref{DisExBan}. For sufficiently large $\gamma>0$, this solution is linearly stable if $s\big( \pmb f_{\pmb u}  (\pmb k_i(\overline{V}), \, \overline{ V} ) \big) < 0$ for each $i\in\lbrace 1, \cdots, J\rbrace$ and nonlinearly stable if, moreover,  $g_v( \pmb k_i (\overline{V}), \overline{V}) < 0$ for each $i\in\lbrace 1, \cdots, J\rbrace $.
\end{rem}

\begin{rem}
	For $n=1$, (\textit{i.e.}~when one ODE is coupled with one reaction-diffusion equation), the stationary solution $(U,V)$ constructed in Theorem~\ref{DisExBan} is linearly exponentially stable in $\Lp{}^{n+1}$ for each $p\in (1, \, \infty)$ and every $\gamma>0$ (not necessarily large as required in Assumption \ref{ass:LinearStabilityGamma}) under the following conditions
	\begin{nalign}
		\label{Stab2}
		s\Big(  f_{ u}\big(U(\cdot),V(\cdot)\big)\Big)<0, \quad s\begin{pmatrix} 
			f_{ u}\big(U(\cdot),V(\cdot)\big) & f_{ v}\big(U(\cdot),V(\cdot)\big) \\ g_{ u}\big(U(\cdot),V(\cdot)\big) & g_{ v}\big(U(\cdot),V(\cdot)\big)
		\end{pmatrix}<0
	\end{nalign}
	and
	\begin{nalign}
		s\big(g_v(U(\cdot), V(\cdot))\big) <0,
	\end{nalign}
	which result from Assumption \ref{ass:LinearStability} and inequalities \eqref{eq:Dcon} for sufficiently small $\varepsilon>0$ and $|\Omega_2|>0$. This fact is an immediate consequence of Proposition \ref{thm:LinearStabilitystantNoncoefficientsN1}. 
\end{rem}

\begin{rem}
	The $L^\infty$-stability in Theorem \ref{thm:ApplicationSystemStabilityDiscontinuous} requires from initial conditions \rev{of} problem \eqref{eq1}-\eqref{nonlinearity} to have the same discontinuities as considered stationary solutions. A different notion of stability (so-called $(\varepsilon,A)$-stability) is used in the papers \cite{MR3973251, MR3583499,MR4213664,MR730252} on particular reaction-diffusion models.
	In that approach,  initial conditions can be continuous functions and neighborhoods of discontinuities of stationary solutions are excluded from the stability analysis.
	We do not apply that approach in this work, because it requires from the considered model to have an invariant region.
\end{rem}

\section{Construction  of discontinuous stationary solutions} 

\label{sec:DiscontinuousSolution}

The proof of Theorem \ref{DisExBan} is based on the following preliminary results. 

\begin{lemma}
	\label{DisLemma1} 
	Fix $p\in [2, \infty)$ and let $\gamma>0$ be arbitrary. For each $b\in \R$ such that $b\neq \gamma\mu_k$ for all eigenvalues $\mu_k$ of $-\Delta_\nu$, and for every $f\in L^p(\Omega)$ the problem
	\begin{nalign}
		\label{DisLinearEquation}
		\gamma \Delta_\nu v + bv & = f \quad \text{for}\quad x\in \Omega,
	\end{nalign}
	has a unique weak solution $v\in \W1p$ satisfying
	\begin{nalign}
		\label{SobNormEst}
		\|v\|_\W1p \leqslant C(b,\gamma) \|f\|_\Lp{}.
	\end{nalign}
	If $\partial \Omega$ is of the class $C^2$, then $v\in \W2p$ and there exists a constant $C= C(b,p,\gamma)>0$ such that 
	\begin{nalign}
		\label{SobNormEstImproved} 		
		\|v\|_\W2p \leqslant C \|f\|_\Lp{}.
	\end{nalign}
\end{lemma}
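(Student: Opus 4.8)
The plan is a two-step argument: first prove the statement for $p=2$ through the spectral decomposition of the Neumann Laplacian, and then raise $p$ to any value in $[2,\infty)$ by a bootstrap based on $L^q$ elliptic regularity and Sobolev embeddings.

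For $p=2$ I would fix an $L^2(\Omega)$-orthonormal basis $(e_k)_{k\ge 0}$ of eigenfunctions of $-\Delta_\nu$, so that $-\Delta_\nu e_k=\mu_k e_k$, and expand $f=\sum_k\hat f_k e_k$ with $\hat f_k=\int_\Omega f e_k\dx$. A function $v=\sum_k\hat v_k e_k\in W^{1,2}(\Omega)$ solves \eqref{DisLinearEquation} weakly if and only if $(b-\gamma\mu_k)\hat v_k=\hat f_k$ for every $k$; since $b\ne\gamma\mu_k$ for all $k$ and $\mu_k\to\infty$, the number $\rho:=\inf_k|b-\gamma\mu_k|$ is strictly positive, so the only candidate is $\hat v_k=(b-\gamma\mu_k)^{-1}\hat f_k$, and this already proves uniqueness. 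For existence one checks that this $v$ obeys $\|v\|_{L^2}\le\rho^{-1}\|f\|_{L^2}$ and $\|\nabla v\|_{L^2}^2=\sum_k\mu_k|b-\gamma\mu_k|^{-2}|\hat f_k|^2\le C\|f\|_{L^2}^2$, the coefficient $\mu_k|b-\gamma\mu_k|^{-2}$ being bounded in $k$ (it tends to $0$); hence $v\in W^{1,2}(\Omega)$ with the asserted bound. Because $\partial\Omega$ is of class $C^2$ one has $D(-\Delta_\nu)=W^{2,2}_\nu(\Omega)$, and since $\mu_k^2|b-\gamma\mu_k|^{-2}$ is likewise bounded in $k$ one obtains $\|\Delta v\|_{L^2}\le C\|f\|_{L^2}$; the classical $W^{2,2}$-estimate for the Neumann problem then gives \eqref{SobNormEstImproved}, hence also \eqref{SobNormEst}, in the case $p=2$.

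For general $p\in[2,\infty)$ I would first note that, $\Omega$ being bounded, $L^p(\Omega)\hookrightarrow L^2(\Omega)$, so for $f\in L^p(\Omega)$ the function $v$ constructed above is already available; it is the unique $W^{1,2}$ weak solution and therefore also the unique $W^{1,p}$ weak solution, and it satisfies $\gamma\Delta v=f-bv$ almost everywhere in $\Omega$ with $\partial_\nu v=0$. Its integrability is then raised by iteration. Assume $v\in W^{2,q}(\Omega)$ for some $q\in[2,p]$ with $\|v\|_{W^{2,q}}\le C\|f\|_{L^p}$ (true for $q=2$ by the previous step, using $\|f\|_{L^2}\le|\Omega|^{1/2-1/p}\|f\|_{L^p}$). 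By Sobolev embedding $v$, hence $f-bv$, lies in $L^{q_\ast}(\Omega)$ with $\|f-bv\|_{L^{q_\ast}}\le C\|f\|_{L^p}$, where $q_\ast=\min(p,q^\sharp)$, $q^\sharp=Nq/(N-2q)$ if $2q<N$, and $q^\sharp$ may be taken arbitrarily large if $2q\ge N$. The Agmon--Douglis--Nirenberg $L^{q_\ast}$-estimate for the Neumann problem on the $C^2$ domain $\Omega$ then upgrades this to $v\in W^{2,q_\ast}(\Omega)$ with $\|v\|_{W^{2,q_\ast}}\le C(q_\ast,\gamma)\big(\|\gamma\Delta v\|_{L^{q_\ast}}+\|v\|_{L^{q_\ast}}\big)\le C\|f\|_{L^p}$. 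As long as $2q<N$ the exponent is multiplied by at least $N/(N-4)>1$ at every step, and as soon as $2q\ge N$ one further step reaches $q_\ast=p$; so after finitely many iterations (their number controlled by $N$ and $p$ only) one arrives at $v\in W^{2,p}(\Omega)$ with the bound \eqref{SobNormEstImproved}, and \eqref{SobNormEst} follows from the embedding $W^{2,p}(\Omega)\hookrightarrow W^{1,p}(\Omega)$.

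The main obstacle lies not in the scheme but in its two non-elementary ingredients, which I would quote from the literature rather than reprove: the identification $D(-\Delta_\nu)=W^{2,2}_\nu(\Omega)$ with the accompanying $W^{2,2}$ a priori bound, and --- carrying the real weight --- the $L^q$ Calder\'on--Zygmund (Agmon--Douglis--Nirenberg) estimate for the Neumann Laplacian on a $C^2$ domain; equivalently, one may invoke that $-\Delta_\nu$ is sectorial on every $L^q(\Omega)$, $1<q<\infty$, with domain $W^{2,q}_\nu(\Omega)$ and with $q$-independent spectrum $\{\mu_k\}$, and read the resolvent bound off directly. The remaining point requiring attention is bookkeeping only: the final constant must depend on $b$, $p$, $\gamma$ (and $\Omega$) alone, which holds because the bootstrap terminates after a number of steps controlled by $N$ and $p$, each contributing an admissible constant, and because $\|f\|_{L^q}\le|\Omega|^{1/q-1/p}\|f\|_{L^p}$ bounds every intermediate norm by $\|f\|_{L^p}$.
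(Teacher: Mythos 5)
Your proposal is correct and follows essentially the same route as the paper: the spectral (eigenfunction) representation of the solution handles $p=2$ and yields uniqueness plus the $W^{2,2}$ bound, and the case $p>2$ is then reached by a bootstrap combining $L^q$ elliptic regularity for the Neumann Laplacian with Sobolev embeddings. Your write-up is merely more explicit than the paper's in two places --- it carries out the $p=2$ spectral estimates directly for general $b\neq\gamma\mu_k$ rather than citing the textbook result for $b<0$ first, and it tracks the bootstrap exponents $q\mapsto\min(p,Nq/(N-2q))$ carefully --- but these are presentational differences, not a different method.
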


\begin{proof}
	In the case of $p=2$ and $b <0$, this is a well known result on the existence and $W^{2,2}$-regularity of solution to problem \eqref{DisLinearEquation}, see \textit{e.g.}\ \cite[Thm. 9.26]{MR2759829}. In fact, we can extend this result to all $b\neq \gamma \mu_k$ because the solution to problem \eqref{DisLinearEquation} is given by the explicit formula
	\begin{nalign}
		v = \sum_{k=0}^{\infty} \frac{1}{\rev{-\gamma\mu_k} + b}\langle f, \Phi_k\rangle_{\Lp{2}} \Phi_k,
	\end{nalign}
	where $\Phi_k$ is an eigenfunction corresponding to the eigenvalue $\mu_k$.
		
	To prove this Lemma for $p>2$, we rewrite the equation in the form 
	\begin{nalign}
		\gamma \Delta_\nu v - v & = f - (b+1)v \quad  \text{for} \quad x\in \Omega.
	\end{nalign}
	By using the $L^p$-elliptic regularity (see \textit{e.g.}~\cite[Thm. 3.1.3]{MR3012216}), the Sobolev embedding $\W22 \subseteq \Lp{} $ for each $p\in (1,\infty)$ if \rev{$N\leqslant 4$} and for $p\in \big(2,\, 2N/(N-4)\big)$ if $N>4$, as well as inequality \eqref{SobNormEstImproved} with $p=2$, we obtain
	\begin{nalign}
		\label{PIneq}
		\|v\|_\W2p &\leqslant C \big(\|f\|_\Lp{} + \|v\|_\Lp{}\big) \\
		&\leqslant C \big(\|f\|_\Lp{} + \|v\|_\W22\big)   \leqslant C\|f\|_\Lp{}.
	\end{nalign}
	
	In order to obtain the desired estimate for the whole range of $p\in [2, \infty)$ it suffices to apply a bootstrap argument involving inequality \eqref{PIneq} together with the Sobolev embedding $\W2p\subseteq\Lp{q}$ for $q<Np/(N-2p)$.
\end{proof}

We obtain discontinuous stationary solutions of problem \eqref{DisProbDef} from the following lemma which is a consequence of the Banach fixed point argument.

\begin{lemma}
	\label{DisTheorem1}
	Let $p>N/2$ with $p\geqslant 2$, $\gamma>0$, and $h_1, h_2 \in C_b^2(\R)$ {\rm (}\textit{i.e.}\ all derivatives up to order two are bounded{\rm )}. Assume that for some $\overline{v} \in \R$ we have  
	\begin{nalign}
		h_1(\overline{v}) = 0 \quad \text{and} \quad h_1'(\overline{v}) \neq \gamma\mu_k
	\end{nalign} 
	for each eigenvalue $\mu_k$ of $-\Delta_\nu$. \rev{For arbitrary open subset $\Omega_1\subseteq \Omega$ with $\Omega_2 = \Omega\setminus\overline{\Omega_1}$} 
	one can find $\varepsilon_0 >0$ such that for each $\varepsilon\in(0, \; \varepsilon_0)$ there exists $\delta>0$ such that if $|\Omega_2|\leqslant \delta$ then problem 
	\begin{nalign}
		\label{DisDefSingle}
		\gamma\Delta_\nu v + h_1(v) \mathbbm{1}_{\Omega_1} + h_2(v) \mathbbm{1}_{\Omega_2} = 0 \qquad for \quad x\in \Omega,
	\end{nalign}
	has a weak solution $v\in \W2p$ satisfying $\|v-\overline{v}\|_\W2p\leqslant \varepsilon$. 
\end{lemma}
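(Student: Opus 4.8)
The plan is to recast \eqref{DisDefSingle} as a fixed point equation for the perturbation $w := v - \overline v$ and to solve it by the Banach contraction principle in a small ball of $\W2p$, exploiting the smallness of both $\varepsilon$ and $|\Omega_2|$. Put $b := h_1'(\overline v)$. Since $b \neq \gamma\mu_k$ for every eigenvalue $\mu_k$ of $-\Delta_\nu$, Lemma~\ref{DisLemma1} furnishes a bounded solution operator $\mathcal S := (\gamma\Delta_\nu + b)^{-1}\colon \Lp{}\to \W2p$ with $\|\mathcal S f\|_\W2p \leqslant C_b \|f\|_\Lp{}$, where $C_b = C_b(b,p,\gamma)$. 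Because $h_1(\overline v)=0$, write $h_1(\overline v + w) = bw + N(w)$ with the Taylor remainder $N(w) := h_1(\overline v + w) - h_1(\overline v) - h_1'(\overline v)w$, so that $|N(w)(x)| \leqslant \tfrac12 \|h_1''\|_{\LiR}\, |w(x)|^2$. Using also $\mathbbm{1}_{\Omega_1} = 1 - \mathbbm{1}_{\Omega_2}$ a.e.\ (we may assume $|\partial\Omega_1\cap\Omega| = 0$, the remaining null set being irrelevant), a short rearrangement shows that $v = \overline v + w$ solves \eqref{DisDefSingle} if and only if $w = \mathcal F(w)$, where
\[
	\mathcal F(w) := \mathcal S\Big( b\, w\, \mathbbm{1}_{\Omega_2} - N(w)\, \mathbbm{1}_{\Omega_1} - h_2(\overline v + w)\, \mathbbm{1}_{\Omega_2}\Big).
\]

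Since $p > N/2$, the embedding $\W2p \hookrightarrow C(\oo)$ holds with a constant $C_S$ — this is where the hypothesis $p>N/2$ enters. On the closed ball $\overline{B_\varepsilon} := \{ w \in \W2p : \|w\|_\W2p \leqslant \varepsilon\}$ I would estimate, for $w, w_1, w_2 \in \overline{B_\varepsilon}$, using $h_1, h_2 \in C_b^2(\R)$ and $|\Omega_2|\leqslant\delta$: the two terms carrying $\mathbbm{1}_{\Omega_2}$ are bounded in $\Lp{}$ by $\delta^{1/p}$ times $C(\|w\|_\Li+1) \leqslant C(\varepsilon+1)$, the summand ``$+1$'' coming from $h_2(\overline v)$, which need not vanish; the remainder obeys $\|N(w)\|_\Lp{} \leqslant \tfrac12\|h_1''\|_{\LiR}\|w\|_\Li\|w\|_\Lp{} \leqslant C\varepsilon^2$ and, writing the difference via the integral mean value theorem together with $|h_1'(\overline v+\xi)-h_1'(\overline v)| \leqslant \|h_1''\|_{\LiR}|\xi|$, also $\|N(w_1) - N(w_2)\|_\Lp{} \leqslant C\varepsilon\|w_1-w_2\|_\Lp{}$; finally $\|\,(h_2(\overline v + w_1) - h_2(\overline v + w_2))\mathbbm{1}_{\Omega_2}\|_\Lp{} \leqslant \|h_2'\|_{\LiR}\,\delta^{1/p}\|w_1-w_2\|_\Li$. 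Combining these bounds with $\|\mathcal S\|\leqslant C_b$ yields
\[
	\|\mathcal F(w)\|_\W2p \leqslant C_b\big( C\varepsilon^2 + C(\varepsilon+1)\delta^{1/p}\big), \qquad \|\mathcal F(w_1) - \mathcal F(w_2)\|_\W2p \leqslant C_b\big(C\varepsilon + C\delta^{1/p}\big)\|w_1 - w_2\|_\W2p,
\]
with constants $C$ depending only on $\gamma$, $b$, $p$, $|\Omega|$ and the $C_b^2$-norms of $h_1$, $h_2$.

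To conclude, I would first fix $\varepsilon_0>0$ with $C_b C\varepsilon_0 \leqslant \tfrac14$; then, given $\varepsilon \in (0,\varepsilon_0)$, choose $\delta>0$ with $C_b C(\varepsilon+1)\delta^{1/p} \leqslant \tfrac{\varepsilon}{2}$ and $C_b C\delta^{1/p} \leqslant \tfrac14$. With these choices $\mathcal F$ maps $\overline{B_\varepsilon}$ into itself and is a $\tfrac12$-contraction there, so it admits a unique fixed point $w\in\overline{B_\varepsilon}$; then $v := \overline v + w\in\W2p$ is a weak solution of \eqref{DisDefSingle} (by the mapping property of $\mathcal S$ recorded in Lemma~\ref{DisLemma1}) satisfying $\|v-\overline v\|_\W2p\leqslant\varepsilon$, which is the assertion. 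Note that $\varepsilon_0$ is fixed before $\varepsilon$, and $\delta$ is then allowed to depend on $\varepsilon$, exactly as required.

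The scheme is a standard contraction argument, so the only point that requires genuine care is the hierarchy of the two small parameters together with the fact that $0$ is \emph{not} a fixed point of $\mathcal F$: the quadratic remainder $N$ forces $\varepsilon_0$ to be small independently of everything else, whereas the genuinely inhomogeneous contribution $h_2(\overline v)\mathbbm{1}_{\Omega_2}$ — present precisely because $h_2(\overline v)$ need not vanish — does not tend to $0$ as $w\to0$ and is tamed only by shrinking $|\Omega_2|$, which is why $\delta$ must be chosen after $\varepsilon$. (A minor side remark: $C_b$ may be large when $h_1'(\overline v)$ is close to some $\gamma\mu_k$, but it is a fixed finite number, so the argument is unaffected once $\varepsilon_0$ and $\delta$ are chosen relative to it.)
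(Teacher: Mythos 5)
Your proposal is correct and follows essentially the same route as the paper: the same linear solution operator $(\gamma\Delta_\nu+h_1'(\overline v))^{-1}$ from Lemma~\ref{DisLemma1}, the same splitting of the fixed-point map into a quadratic Taylor remainder on $\Omega_1$ and an inhomogeneous term on $\Omega_2$ controlled by $|\Omega_2|^{1/p}$, the same use of the embedding $\W2p\hookrightarrow\Li$ for $p>N/2$, and the same hierarchy of first fixing $\varepsilon_0$ and then choosing $\delta$ depending on $\varepsilon$. The only (cosmetic) difference is that the paper normalises $\overline v=0$ and writes the two pieces as operators $\mathcal T_1,\mathcal T_2$ rather than using $\mathbbm{1}_{\Omega_1}=1-\mathbbm{1}_{\Omega_2}$ explicitly.
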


\begin{proof}
	We denote by $v=\big(\gamma \Delta_\nu  + h_1'(\overline{v})\big)^{-1}f$ the solution of problem \eqref{DisLinearEquation} with $b = h_1'(\overline{v})$ which by Lemma~\ref{DisLemma1} satisfies the estimate
	\begin{nalign}
		\label{OperatorLNormW22Estimation}
		\left\| \big(\gamma \Delta_\nu  + h_1'(\overline{v})\big)^{-1} f \right\|_\W2p \leqslant C\|f\|_\Lp{}
	\end{nalign}
	for each $p\geqslant2$ and all $f\in \Lp{}$. 
	
	Replacing $v$ by $v+\overline{v}$ and $h_i(v)$ for $i\in \{ 1,2\}$ by $h_i(v+\overline{v})$ we may assume without loss of generality that $\overline{v} = 0$. We write equation \eqref{DisDefSingle} in the equivalent form
	\begin{nalign}
		\label{T0}
		v &= \big(\gamma \Delta_\nu  + h_1'(0)\big)^{-1} \Big(\big(  h'_1(0) v - h_1(v)  \big) \mathbbm{1}_{\Omega_1} \Big) \\ &+ \big(\gamma \Delta_\nu  + h_1'(0)\big)^{-1} \Big(\big(h_1'(0)v - h_2(v) \big) \mathbbm{1}_{\Omega_2}\Big) \\
		&= \mathcal{T}_1(v) + \mathcal{T}_2(v).
	\end{nalign}
	Our goal is to show that the right-hand side of this equation defines a contraction on the ball
	\begin{nalign}
		B_\varepsilon(0) = \lbrace v \in \W2p: \|v\|_\W2p \leqslant \varepsilon \rbrace
	\end{nalign}
	provided $|\Omega_2|$ and $\varepsilon >0$ are sufficiently small. 
	
	First, we deal with the operator $\mathcal{T}_1$. Since \rev{$h_1 \in C_b^2(\R)$} and $h_1(0) = 0$ by the Taylor expansion we get
	\begin{nalign}
		\left|h_1'(0)v(x) - h_1\big(v(x)\big)\right|=\left|h_1\big(v(x)\big) - h_1(0) - h_1'(0)v(x)\right|\leqslant \frac{1}{2} \|h_1''\|_\LiR \big|v(x)\big|^2.
	\end{nalign}
	Hence, by estimate \eqref{OperatorLNormW22Estimation} and the Sobolev embedding $\W2p \subseteq \Li$ for $p>N/2$, there exists a constant $C >0$ such that for each $v\in B_\varepsilon(0)$ we have
	\begin{nalign}
		\label{OperatorT1BTBEstimation}
		\| \mathcal{T}_1(v) \|_\W2p \leqslant  C \|v\|_\Li^2 \leqslant C\varepsilon^2 .
	\end{nalign}
	Analogously, by the Taylor expansion,
	\begin{nalign}
		\Big|h_1'(0)v(x) &- h_1\big(v(x)\big)-\Big(h_1'(0)w(x) - h_1\big(w(x)\big)\Big) \Big| \\ 
		& \leqslant \left| h_1\big(v(x)\big) - h_1\big(w(x)\big) +h_1'\big(w(x)\big)\big(v(x)-w(x)\big)\right|  
		\\ &+ \left|\big(h_1'\big(w(x)\big) - h_1'(0) \big)\big(v(x)-w(x)\big)\right| \\
		& \leqslant \frac{1}{2}\|h_1'' \|_\LiR \big|v(x)-w(x)\big|^2  + \|h_1''\|_\LiR \big|w(x)\big|\big|v(x)-w(x)\big|. 
	\end{nalign} 
	Hence, as in inequalities \eqref{OperatorT1BTBEstimation}, for $v,w\in B_\varepsilon(0)$, we obtain
	\begin{nalign}
		\label{OperatorT1Contraction}
		\|\mathcal{T}_1(v) - \mathcal{T}_1(w)\|_\W2p \leqslant C\varepsilon\|h_1''\|_\LiR  \|v-w\|_\W2p.
	\end{nalign}
	Here, we choose $\varepsilon_0>0$ such that for all $\varepsilon\in (0,\varepsilon_0)$ we have 
	\begin{nalign}
		C\varepsilon^2 \leqslant \varepsilon/2 \quad \text{and} \quad C\varepsilon\|h_1''\|_\LiR < 1.
	\end{nalign}
	
	In the case of the operator $\mathcal{T}_2$, we begin by the following inequality 
	\begin{nalign}
		\label{OperatorT2PointwiseEstimate}
		\left|h_1'(0)v(x) - h_2\big(v(x)\big)\right| \leqslant \|h_1'\|_\LiR \big|v(x)\big| + \|h_2\|_\LiR.
	\end{nalign}
	Thus, by estimate \eqref{OperatorT2PointwiseEstimate} and the embedding $\W2p \subseteq \Li$ \rev{for $p>N/2$,} for each $v\in B_\varepsilon(0)$ we get
	\begin{nalign}
		\label{eq:DiscontinuesSolutionsOperatorT2BTB}
		\|\mathcal{T}_2(v)\|_\W2p &\leqslant C \big( \|h_1'\|_\LiR \| v \mathbbm{1}_{\Omega_2}\|_\Lp{} + \|h_2\|_\LiR \|\mathbbm{1}_{\Omega_2}\|_\Lp{} \big) \\
		& \leqslant C|\Omega_2|^{\frac{1}{p}}\big(   \|v\|_\W2p + 1 \big).
	\end{nalign}
	Analogously, we have 
	\begin{nalign}
		\label{OperatorT2DifferencePointiwseEstimate}
		\Big| h_1'(0)v(x) &- h_2\big(v(x)\big) - \big(h_1'(0)w(x) - h_2\big(w(x)\big) \big) \big|\\  & \leqslant (\|h_1'\|_\LiR+\|h_2'\|_\LiR) \big(v(x)-w(x)\big).
	\end{nalign}
	Thus, by inequality \eqref{OperatorT2DifferencePointiwseEstimate},  as in estimate \eqref{eq:DiscontinuesSolutionsOperatorT2BTB}, for $v,w\in B_\varepsilon(0)$, we obtain
	\begin{nalign}
		\label{OperatorT2Contraction}
		\|\mathcal{T}_2(v) - \mathcal{T}_2(w)\|_\W2p & \leqslant  C(\|h_1'\|_\LiR, \|h_2'||_\LiR) \|(v-w)\mathbbm{1}_{\Omega_2}\|_\Lp{} \\
		& \leqslant C |\Omega_2|^{\frac{1}{p}} \|v-w\|_\W2p.
	\end{nalign}
	Here, we choose $\delta >0$ such that for $\Omega_2 \subseteq \R^N$ satisfying $|\Omega_2|<\delta$ we have 
	\begin{nalign}
		C\big( \varepsilon + 1 \big)|\Omega_2|^{\frac{1}{p}} \leqslant \varepsilon/2 \quad \text{and} \quad C |\Omega_2|^{\frac{1}{p}} < 1.
	\end{nalign}
	Estimates \eqref{OperatorT1BTBEstimation}, \eqref{OperatorT1Contraction}, \eqref{OperatorT2PointwiseEstimate} and \eqref{OperatorT2Contraction} together with the Banach fixed point argument complete the proof of the existence of a solution to equation \eqref{T0} in $B_\varepsilon(0)$.
\end{proof}

\begin{proof}[Proof of Theorem \ref{DisExBan}] 
	Let us choose $\varepsilon >0$ so small that $\left[\overline{V} - \varepsilon, \overline{V} + \varepsilon\right] \subseteq \V$. First, for $i\in \{ 1, 2\}$ we restrict $\pmb k_i$ to $\left[\overline{V} - \varepsilon, \overline{V} + \varepsilon\right]$ and then we extend them in arbitrary way to $\tilde{\pmb k}_i \in C^2_b(\R)$ such that $\tilde{\pmb k}_i\big|= \pmb k_i$ on $\left[\overline{V} - \varepsilon, \overline{V} + \varepsilon\right]$. Now we define \rev{$h_i(V) = g\big( \tilde{\pmb k}_i(V),V \big)$} for $i\in \{ 1,2\}$ which satisfy assumptions of Lemma ~\ref{DisTheorem1}. Obviously $h_i \in C^2(\R, \R)$ and $h_1(\overline{V}) = 0$. 
	Applying the identity for the determinants from equation \eqref{eq:DetIden} and using \rev{Assumption \ref{ass:StatSol}} we obtain
	\begin{nalign}
		h_1'(\overline{ V}) &=  g_{\pmb u}\big(\widetilde{\pmb k_1}(\overline{ V}), \overline{ V}\big)\widetilde{\pmb k_1}'(\overline{ V})  + g_{v}\big(\widetilde{\pmb k_1}(\overline{ V}), \overline{ V}\big) \\
		& = \rev{- g_{\pmb u}\big(\widetilde{\pmb k_1}(\overline{ V}), \overline{ V}\big) \pmb f_{\pmb u}\big(\widetilde{\pmb k_1}(\overline{ V}), \overline{ V}\big)^{-1}\pmb f_{v}\big(\widetilde{\pmb k_1}(\overline{ V}), \overline{ V}\big)   + g_{v}\big(\widetilde{\pmb k_1}(\overline{ V}), \overline{ V}\big)}\\
		&= \rev{\frac{1}{\det \left( \pmb f_{\pmb u}(\overline{\pmb U},\overline{ V})\right)}\det\begin{pmatrix}
			\pmb f_{\pmb u} (\overline{\pmb U},\overline{ V}) &   \pmb f_{v} (\overline{\pmb U},\overline{ V}) \\
			g_{\pmb u} (\overline{\pmb U},\overline{ V})  &   g_{v} (\overline{\pmb U},\overline{ V}) 
		\end{pmatrix} \neq \gamma\mu_k.}
	\end{nalign}
	Hence for each $p\in[2,\infty)$, by Lemma \ref{DisTheorem1}, we obtain a solution $V\in \W2p$ to problem \eqref{DisDefSingle} which satisfy $\| V-\overline{V}\|_\W2p \leqslant \varepsilon$ for arbitrary small $\varepsilon>0$ provided $|\Omega_2|>0$ is sufficiently small. However by the Sobolev embedding with $p>N/2$, we have
	\begin{nalign}
		\|V-\overline{V}\|_\Li \leqslant C\|V - \overline{ V}\|_\W2p.
	\end{nalign} 
	Thus, if $\varepsilon >0$ is small enough we obtain $\tilde{\pmb k}_i\big(V(x)\big) = \pmb k_i\big(V(x)\big)$ for all $x\in \Omega$. This completes the construction of solution to problem \eqref{DisProbDef}.
\end{proof}

\section{Linear stability} 

\label{sec:LinearEquation}

\subsection{Linearised operator}\label{sec:LinearizedSpectra} 
In order to apply a linearisation procedure, first, we study a stability of the zero solution to the following general linear reaction-diffusion-ODE system
\begin{nalign}
	\label{eq:LinearProblemDefinition}
	\pmb \varphi_t  &=   \bA  \pmb \varphi+\bB \psi &     \text{for}\quad &x\in\overline{\Omega}, \quad t>0, &&
	\\  \psi_t  &=    \gamma\Delta_\nu \psi+\bC   \pmb \varphi+d \psi&  \text{for}\quad &x\in \Omega, \quad t>0,
\end{nalign}
with general matrices 
\begin{nalign}
	\label{eq:LinearSystemCoefficientMatrixDefinition}
	\bA &= \pmb A(x) =
	\begin{pmatrix}
	a_{11}(x)&\dots&a_{1n}(x)\\
	\vdots&\ddots&\vdots\\
	a_{n1}(x)&\dots&a_{nn}(x)
	\end{pmatrix},
	&\bB&= \pmb B(x) =
	\begin{pmatrix}
	b_{1}(x)\\
	\vdots\\
	b_{n}(x)\\
	\end{pmatrix},
	\\
	\bC &= \pmb C(x) = 
	\begin{pmatrix}
	c_{1}(x)&\dots&c_{n}(x)
	\end{pmatrix}, 
	&d &= d(x),
\end{nalign}
with arbitrary coefficients (not necessarily as those in equation \eqref{eq:ReactionDiffusionABCDMatrices})
\begin{nalign}
	\label{abcd}
	a_{ij}, \; b_i, \; c_i, \; d \in L^\infty(\Omega), \quad \text{for} \; i,j \in \lbrace 1, \cdots, n \rbrace.
\end{nalign}

First, following our parallel work \cite{CMCKS01}, we recall  properties of the operator  defined by the formula
\begin{nalign}
	\label{eq:LinearSystemOperatorLDefinition}
	\L_p \begin{pmatrix}
		\pmb \varphi \\ \psi
	\end{pmatrix} \equiv \begin{pmatrix}
		\pmb A \pmb \varphi+ \pmb B \psi \\ \gamma\Delta_\nu \psi + \pmb C \pmb \varphi + d \psi
	\end{pmatrix} \quad \text{with} \quad D(\L_p) = L^p(\Omega)^n \times W^{2,p}_\nu(\Omega)
\end{nalign} 
for each $p\in (1,\, \infty)$. Here, as usual, we denote by $\sigma(\L_p)$ its spectrum and by $s(\L_p) = \sup \{\Re \lambda: \lambda \in \sigma(\L_p)\}$ its spectral bound. 
\begin{itemize}
	\item  $\big( \L_p, D(\L_p) \big)$ generates an analytic semigroup on $L^p(\Omega)^{n+1}$, see~\cite[Prop.~4.1]{CMCKS01}. 
	\item The spectrum of $\big(\L_p, D(\L_p)\big)$ can be characterized~as
		\begin{nalign}
			\label{thm:SpectrumOfOperatorL}
			\sigma(\L_p) = \sigma\big(\pmb A(\cdot)\big) \cup \lbrace \text{eigenvalues of } \; \L_p \rbrace.
		\end{nalign}
		In particular, the spectrum of $\L_p$ is independent of $p$, see \cite[Thm. 4.5]{CMCKS01}.
\end{itemize}

For a completeness of the exposition, we recall also properties of the spectrum~$\sigma\big( \pmb A(\cdot)\big)$ of the multiplication operator $\pmb A(\cdot)$ acting on $\Lp{}^n$ for $p\in [1, \infty)$ induced by an arbitrary  matrix $\pmb A(x) =\big( a_{i,j}(x) \big)_{i,j=1}^n$ with  $x$-dependent and essentially bounded coefficients. The following results are either well-known or can be found in the works \cite{HabEng,CMCKS01}, \cite[Appendix B]{kowall2021longtime}.

\begin{itemize}
	\item For a square matrix $\pmb A_0$ with constant coefficients, we have \begin{nalign}
		s(\pmb A_0) = \max \{ \Re \lambda : \lambda \text{ is an eigenvalue of } \pmb A_0\}.
	\end{nalign} 
	\item The spectral radius $r(\pmb A_0) = \sup \lbrace |\lambda|: \; \lambda \in \sigma(\pmb A_0) \rbrace$ satisfies
	\begin{nalign}
		\label{eq:SpecRad}
		r(\pmb A_0) \leqslant |A_0| = \left( \sum_{i,j=1}^n a_{i,j}^2 \right)^\frac{1}{2}.
	\end{nalign}
	\item  For more general matrices $\pmb A = \pmb A(x)$, the spectrum has the form (see e.g.~\cite[Lemma~4.3]{CMCKS01})
	\begin{nalign}
		\label{eq:LinearSystemSpectrumAlternativeForm}
		\sigma\big(\pmb A(\cdot) \big) = 
		\left\{ \lambda \in \C: 
		\begin{array}{l}
			\forall{\varepsilon>0} \quad \exists{\Omega_\varepsilon \subseteq \Omega} \quad \text{open set} \quad \exists{ \pmb \xi_\varepsilon \in \R^n} \\ 
			\text{such that} \quad \| \pmb A(\cdot) \pmb \xi_\varepsilon - \lambda \pmb \xi_\varepsilon \|_{L^\infty(\Omega_\varepsilon)} \leqslant \varepsilon |\pmb \xi_\varepsilon|
		\end{array} 
		\right\}.
	\end{nalign} 
 	\item The following equality holds true
	\begin{nalign}
 		\label{SpecBouChar}
 		s\big( \pmb A (\cdot) \big) = \esssup_{x\in \Omega} \big( s(\pmb A(x))\big).
 	\end{nalign}
 	\item \label{4}
 	By \cite[Lemma 4.4]{CMCKS01}, if there exists  a closed set $\Omega'\subseteq \Omega$ such that $|\Omega'|=0$ and \rev{$a_{i,j}\big|_{\Omega\setminus \Omega'}$} is continuous, then
 		\begin{nalign}
 			\label{eq:SpecEq}
 			\sigma\big(\pmb A(\cdot)\big) = \overline{\bigcup_{x\in\Omega\setminus \Omega'} \sigma\big(\pmb A(x)\big)}.
 		\end{nalign} 
	\item  It follows immediately from formulas \eqref{eq:LinearSystemSpectrumAlternativeForm} and \eqref{eq:SpecEq} that eigenvalues of matrix $\pmb A(x)$ belongs to the spectrum of $\L_p$ for almost every $x\in \overline{\Omega}$.
\end{itemize}

Finally, we recall properties of matrices which we use in this work.
\begin{itemize}
	\item \label{MatInv} For an arbitrary matrix $\pmb A_0$ with constant coefficients and  satisfying $s(\pmb A_0) <0$, the resolvent $\left|(\pmb A_0 - \lambda \pmb I)^{-1}\right|$ is uniformly bounded for all $\lambda \in \C$ with $\Re \lambda \geqslant 0$. Indeed, the determinant is a product of all eigenvalues, hence by \eqref{eq:SpecRad} 
	\begin{nalign}
		|\det (\pmb A_0  - \lambda \pmb I)| \geqslant 	\big(\max\left(|s(\pmb A_0)|, |\lambda| - |\pmb A_0|  \right)\big)^n.
	\end{nalign}
	Therefore, denoting by adj the adjugate matrix we obtain 
	\begin{nalign}
		\label{eq:InvEs}
		\left|(\pmb A_0 - \lambda \pmb I)^{-1}\right| &=  \frac{\left| 	{\text{adj}(\pmb A_0 - \lambda\pmb I)} \right|}{|\det (\pmb A_0 - \lambda \pmb I)|} \leqslant \frac{C(\revone{|\lambda|^{n-1}}+1)}{\max\big(|s(\pmb A_0)|, |\lambda| - |\pmb A_0|  \big)^n} \leqslant C.
	\end{nalign}
	Analogously, for every matrix $\pmb A(x)$ with essentially bounded coefficients, equation~ \eqref{SpecBouChar} provides the inequality $s\big( \pmb A(x)\big) \leqslant s\big(\pmb A(\cdot) \big)$ for almost all $x\in \Omega$. Thus, if $s\big( \pmb A(\cdot)\big) < 0$, then 
	\begin{nalign}
		\label{eq:InvEs2}
		\left|(\pmb A(x) - \lambda \pmb I)^{-1}\right| & \leqslant \frac{C(\lambda^n+1)}{\max\big(|s(\pmb A(x))|, |\lambda| - |\pmb A(x)|  \big)^n} \\
		&\leqslant \frac{C(\lambda^n+1)}{\max\big(\left|s\big(\pmb A(\cdot)\big)\right|, |\lambda| - \|\pmb A\|_\Li  \big)^n} \leqslant C,
	\end{nalign}
	for almost all $x\in \Omega$ and all $\lambda \in C$ with $\Re \lambda \geqslant 0$.  
		  
	\item It follows from equation \eqref{SpecBouChar}, that if $s\big(\pmb A(\cdot)\big) <0$ then for every $k\in \big(0, -s(\pmb A(\cdot)\big)$ there exist constants $C>0$ such that
	\label{SemEs}
	\begin{nalign}
		\label{RTG}
		\left| e^{\pmb A(x) t} \right| \leqslant Ce^{-kt} \quad \text{for almost all} \quad x\in \Omega.
	\end{nalign}
	\item \label{DetIden} For matrices $\pmb A, \pmb B, \pmb C, d$ as in  \eqref{eq:LinearSystemCoefficientMatrixDefinition}-\eqref{abcd} and arbitrary $\lambda \in \C$ we have
	\begin{nalign}
		\label{eq:DetIden}
		d - \lambda -  \bC (\bA - \lambda \pmb I) ^{-1} \bB 	&=  \frac{1}{\det \left(\bA - \lambda \pmb I\right)}\det\begin{pmatrix}
		\bA - \lambda \pmb I  & \bB  \\
		\bC  & d- \lambda
		\end{pmatrix}.
	\end{nalign}
	The proof of this identity is given in \cite[eq. (3.5)]{CMCKS01}.
\end{itemize}

\subsection{Conditions for stability} 

We begin by resolvent estimates in the case of constant coefficients. 
\begin{lemma}
	\label{thm:LinearSystemEstimatesForUVByFG}
	Assume that matrices $\pmb A = \pmb A_0$, $\pmb B = \pmb B_0$, $\pmb C = \pmb C_0$, $d = d_0$ in \eqref{eq:LinearSystemCoefficientMatrixDefinition} have constant coefficients and satisfy 
	\begin{nalign}
		\label{eq:LinearSystemStabilityConditionNDimensional}
		s(\pmb A_0)<0 \quad \text{as well as} \quad s\begin{pmatrix} 
		\pmb A_0 & \pmb B_0 \\ \pmb C_0 & d_0
		\end{pmatrix}<0.
	\end{nalign} 
	Let $p\in (1,\infty)$ and $\lambda \in \C$ with $\Re\lambda\geqslant0$. There exists $\gamma_0 = \gamma_0(\pmb A_0, \pmb B_0, \pmb C_0, d_0, |\Omega| ,p)>0$ such that for all $\gamma \geqslant \gamma_0$ the problem
	\begin{nalign}
		\label{eq:LinearSystemEstimatesForUV}
		(\pmb A_0 - \lambda \pmb I) \pmb \varphi + \pmb B_0 \psi &= \pmb f, \quad \text{for } x\in \overline{\Omega}, \\
		\gamma \Delta_\nu \psi + \pmb C_0 \pmb \varphi + (d_0-\lambda)\psi &= g, \quad \text{for } x\in \Omega,
	\end{nalign}
	has a unique solution $(\pmb \varphi, \psi)\in L^p(\Omega)^n\times \W1p$ for each $(\pmb f, g)\in L^p(\Omega)^{n+1}$. Moreover, there exists a number $K=K(\pmb A_0, \pmb B_0, \pmb C_0, d_0, |\Omega|,p, \gamma)>0$ independent of~$\lambda$ and $\pmb f, g$ such that
	\begin{nalign}
	\label{eq:ResLin2}
	\|\pmb \varphi\|_\Lp{} + \|\psi\|_\W1p \leqslant K \big(\|\pmb f\|_\Lp{} + \|g\|_\Lp{}\big).
	\end{nalign}
\end{lemma}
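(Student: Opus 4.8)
The plan is to read \eqref{eq:LinearSystemEstimatesForUV} as the resolvent equation $(\L_p-\lambda\pmb I)(\pmb\varphi,\psi)=(\pmb f,g)$ for the constant–coefficient operator $\L_p$ of \eqref{eq:LinearSystemOperatorLDefinition} built from $\pmb A_0,\pmb B_0,\pmb C_0,d_0$. The assertion then becomes: $\{\Re\lambda\geqslant0\}\subseteq\rho(\L_p)$ for $\gamma$ large, with a resolvent bound uniform on this half-plane, plus a gain of two spatial derivatives in the second component. So I would split the proof into (a) locating $\sigma(\L_p)$, and (b) a $\lambda$–uniform a priori estimate.

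\emph{Locating the spectrum.} By the characterisation \eqref{thm:SpectrumOfOperatorL}, $\sigma(\L_p)=\sigma(\pmb A_0)\cup\{\text{eigenvalues of }\L_p\}$, and a one-line computation using the determinant identity \eqref{eq:DetIden} shows that, for $\lambda\notin\sigma(\pmb A_0)$, $\lambda$ is an eigenvalue of $\L_p$ iff $b(\lambda)=\gamma\mu_k$ for some $k$, where
\begin{nalign}
	b(\lambda):=d_0-\lambda-\pmb C_0(\pmb A_0-\lambda\pmb I)^{-1}\pmb B_0=\frac{1}{\det(\pmb A_0-\lambda\pmb I)}\det\begin{pmatrix}\pmb A_0-\lambda\pmb I & \pmb B_0\\ \pmb C_0 & d_0-\lambda\end{pmatrix}.
\end{nalign}
Now $\sigma(\pmb A_0)\subseteq\{\Re<0\}$ by \eqref{eq:LinearSystemStabilityConditionNDimensional}; the case $k=0$, i.e.\ $b(\lambda)=0$, occurs only at eigenvalues of $\begin{pmatrix}\pmb A_0 & \pmb B_0\\ \pmb C_0 & d_0\end{pmatrix}$, which again have $\Re<0$ by \eqref{eq:LinearSystemStabilityConditionNDimensional}; and for $k\geqslant1$, $\Re\lambda\geqslant0$, the uniform bound \eqref{eq:InvEs} (available since $s(\pmb A_0)<0$) gives $\Re b(\lambda)\leqslant M_1:=|d_0|+|\pmb C_0||\pmb B_0|\sup_{\Re\lambda\geqslant0}|(\pmb A_0-\lambda\pmb I)^{-1}|$, a constant depending only on the matrices. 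Choosing $\gamma_0>M_1/\mu_1$ forces $\Re b(\lambda)\leqslant M_1<\gamma\mu_1\leqslant\gamma\mu_k$, hence $b(\lambda)\neq\gamma\mu_k$, for all $k\geqslant1$ and all $\gamma\geqslant\gamma_0$. Thus $\{\Re\lambda\geqslant0\}\subseteq\rho(\L_p)$, which already yields existence and uniqueness of $(\pmb\varphi,\psi)$; running the same argument on the slightly larger half-plane $\{\Re\lambda\geqslant-\varepsilon_0\}$ (possible because $\sigma(\pmb A_0)$ and the zeros of $b$ are bounded away from $\{\Re=0\}$) gives in fact $\sigma(\L_p)\subseteq\{\Re\lambda\leqslant-\delta\}$ for some $\delta=\delta(\gamma)>0$.

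\emph{Uniform estimate.} Since $\L_p$ generates an analytic semigroup on $L^p(\Omega)^{n+1}$ (recalled above from \cite{CMCKS01}) and $\sigma(\L_p)\subseteq\{\Re\lambda\leqslant-\delta\}$, the standard resolvent estimate for analytic semigroups, together with the fact that the compact piece $\{\Re\lambda\geqslant0,\ |\lambda|\leqslant R\}$ lies in $\rho(\L_p)$, yields
\begin{nalign}
	\sup_{\Re\lambda\geqslant0}(1+|\lambda|)\,\big\|(\L_p-\lambda\pmb I)^{-1}\big\|_{L^p(\Omega)^{n+1}\to L^p(\Omega)^{n+1}}<\infty,
\end{nalign}
so $\|\pmb\varphi\|_\Lp{}+\|\psi\|_\Lp{}\leqslant\frac{C}{1+|\lambda|}\big(\|\pmb f\|_\Lp{}+\|g\|_\Lp{}\big)$. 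To upgrade $\psi$ to $\W1p$ I rewrite the second equation of \eqref{eq:LinearSystemEstimatesForUV} as $\gamma\Delta_\nu\psi-\psi=g-\pmb C_0\pmb\varphi-(d_0+1-\lambda)\psi$ and apply the $L^p$–elliptic regularity for $\gamma\Delta_\nu-1$ used in the proof of Lemma~\ref{DisLemma1}: $\|\psi\|_\W2p\leqslant C\big(\|g\|_\Lp{}+\|\pmb\varphi\|_\Lp{}+(1+|\lambda|)\|\psi\|_\Lp{}\big)$. The factor $1+|\lambda|$ is exactly cancelled by the decay just obtained, so $\|\psi\|_\W2p\leqslant C(\|\pmb f\|_\Lp{}+\|g\|_\Lp{})$ with $C$ independent of $\lambda$; together with $\|\pmb\varphi\|_\Lp{}\leqslant C(\|\pmb f\|_\Lp{}+\|g\|_\Lp{})$ this proves \eqref{eq:ResLin2} (in fact with the stronger norm $\W2p$).

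The main obstacle is precisely this $\lambda$–uniformity. If one reduces instead to the scalar equation $\gamma\Delta_\nu\psi+b(\lambda)\psi=\tilde g$, the coefficient $b(\lambda)$ is unbounded as $|\lambda|\to\infty$, so a naive elliptic bootstrap produces constants that blow up with $|\lambda|$. This is overcome by the two ingredients above: (i) taking $\gamma$ large — this is where the threshold $\gamma_0$, depending on $M_1$ and on $\mu_1$, is needed — so that the whole spectrum of $\L_p$ sits in a half-plane $\{\Re\lambda\leqslant-\delta\}$, and (ii) exploiting analyticity of the semigroup to obtain the $1/(1+|\lambda|)$ decay of the resolvent, which furnishes exactly the slack needed to absorb the $(1+|\lambda|)\|\psi\|_\Lp{}$ term in the elliptic estimate.
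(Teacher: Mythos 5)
Your proposal is correct, but it follows a genuinely different route from the paper. The paper proves the \emph{a priori} bound \eqref{eq:ResLin2} by hand: it integrates both equations over $\Omega$ and uses $s\begin{spm}\pmb A_0 & \pmb B_0\\ \pmb C_0 & d_0\end{spm}<0$ together with \eqref{eq:InvEs} to control the averages of $\pmb\varphi$ and $\psi$; it then subtracts the averages, uses $s(\pmb A_0)<0$ to bound the mean-free part $\tilde{\pmb\varphi}$ pointwise by $\tilde\psi$, and closes the estimate with $L^p$-elliptic regularity plus the Poincar\'e inequality, the largeness of $\gamma$ being used to absorb the term $\frac{C}{\gamma}\int_\Omega|\tilde\psi|^p\dx$. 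You instead locate $\sigma(\L_p)$ via the determinant identity \eqref{eq:DetIden} (with $\gamma\mu_1>M_1$ playing the role of the largeness condition), and then invoke sectoriality of $\L_p$ to get the uniform resolvent decay $(1+|\lambda|)^{-1}$, which is exactly what is needed to cancel the factor $(1+|\lambda|)$ in the elliptic bootstrap. Your argument is sound at every step I checked (the reduction of the eigenvalue problem to $b(\lambda)=\gamma\mu_k$, the bound $\Re b(\lambda)\leqslant M_1$ on $\{\Re\lambda\geqslant 0\}$, the case $k=0$ via the full matrix, and the compactness/sectoriality combination for the uniform resolvent bound), and it even yields a slightly stronger conclusion ($W^{2,p}$ control of $\psi$ and decay in $|\lambda|$), as well as a cleaner existence/uniqueness statement than the paper's ``standard construction'' remark. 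What it costs is reliance on the external machinery from \cite{CMCKS01} (the spectral characterization \eqref{thm:SpectrumOfOperatorL} and analytic semigroup generation), whereas the paper's computation is self-contained and produces the explicit estimate in the precise form reused in the proof of Theorem~\ref{thm:ResEsNonCon}. Two cosmetic remarks: your threshold $\gamma_0>M_1/\mu_1$ depends on $\mu_1(\Omega)$ rather than on $|\Omega|$ alone, but the paper's Poincar\'e constant has the same feature, so this is consistent with the stated dependence; and for the uniform bound on $\{\Re\lambda\geqslant 0\}$ the excursion to the half-plane $\{\Re\lambda\geqslant-\varepsilon_0\}$ is not strictly needed, since closedness of the half-plane in $\rho(\L_p)$ plus sectorial decay at infinity already suffices.
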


\begin{proof}
	\rev{A unique solution to system \eqref{eq:LinearSystemEstimatesForUV} 
	can be constructed in a standard way, once we prove the {\it a priori} estimate
 \eqref{eq:ResLin2}.} In order to prove inequality \eqref{eq:ResLin2}, we integrate both equations in \eqref{eq:LinearSystemEstimatesForUV} over $\Omega$ to obtain
	\begin{nalign}
		\label{eq:LinearSystemIntegralEstimatesForUV}
		(\pmb A_0 - \lambda \pmb I) \int_\Omega\pmb \varphi\dx + \pmb B_0 \int_\Omega \psi\dx &= \int_\Omega\pmb f\dx, \\
		\pmb C_0 \int_\Omega\pmb \varphi\dx + (d_0-\lambda)\int_\Omega \psi\dx &= \int_\Omega g\dx.
	\end{nalign}
	Since $s\begin{spm}
	\pmb A_0 &\pmb B_0 \\ \pmb C_0 & d_0 \end{spm} < 0$, applying inequality \eqref{eq:InvEs}, we have
	\begin{nalign}
		\label{eq:LinearSystemUVMassEstiamte}
		\left| \int_\Omega \pmb \varphi \dx \right| + \left| \int_\Omega \psi \dx \right| &\leqslant C 	\left(\left|  \int_\Omega \pmb f	 \dx \right| + \left| \int_\Omega g\dx \right| \right) \\ & \leqslant C |\Omega|^\frac{p-1}{p}  \big(\|\pmb f\|_\Lp{} + \|g\|_\Lp{}\big)
	\end{nalign}
	with a constant $C>0$ independent of $\lambda$. Subtracting equations \eqref{eq:LinearSystemIntegralEstimatesForUV} multiplied by $\frac{1}{|\Omega|}$ from equations~\eqref{eq:LinearSystemEstimatesForUV} and denoting the \rev{deviations from the} averages 
	\begin{nalign}
		\tilde{\pmb \varphi} &= \pmb \varphi - \frac{1}{|\Omega|} \int_\Omega \pmb \varphi \dx, & 
		\tilde{\psi} &= \psi - \frac{1}{|\Omega|} \int_\Omega \psi \dx, \\	
		\tilde{\pmb f} &= \pmb f - \frac{1}{|\Omega|} \int_\Omega \pmb f\dx, &
		\tilde{g} &= g - \frac{1}{|\Omega|} \int_\Omega g \dx, \\		
	\end{nalign}
	we obtain
	\begin{nalign}
		\label{eq:LinearSystemEstimatesForUVAverages}
		(\pmb A_0 - \lambda \pmb I) \tilde{\pmb \varphi} + \pmb B_0 \tilde{\psi} &= \tilde{\pmb f}, \quad \text{for } x\in	 \overline{\Omega}, \\
		\gamma \Delta_\nu \tilde{\psi} + \pmb C_0 \tilde{\pmb \varphi} + (d_0-\lambda)\tilde{\psi} &= \tilde{g}, \quad \text{for } x\in \Omega.
	\end{nalign}
	Thus, by first equation in \eqref{eq:LinearSystemEstimatesForUVAverages} and by inequality \eqref{eq:InvEs}, we have
	\begin{nalign}
		\label{eq:LinearSystemPointwiseUEstimate}
		| \tilde{\pmb \varphi}(x)|&\rev{  =\left| (\pmb A_0 -\lambda \pmb I)^{-1} \left(\tilde{\pmb f}(x) - \pmb B_0 \tilde\psi(x) \right)\right|} \leqslant C \big( |\tilde{\pmb f}(x)| + |\tilde{\psi}(x)| \big)
	\end{nalign}
	for all $x\in \overline{\Omega}$.
	Next, applying the elliptic regularity for $\Delta_\nu$, (see eg.~\cite[Thm.~3.1.3]{MR3012216}) to the second equation in \eqref{eq:LinearSystemEstimatesForUVAverages} we obtain the estimate
	\begin{nalign}
		\label{eq:GradP}
		\int_\Omega |\nabla \tilde{\psi}|^p \dx \leqslant \frac{C}{\gamma} \left(\int_\Omega |\tilde{\psi}|^p \dx + \int_\Omega|\tilde{\pmb	 f}|^p\dx + \int_\Omega|\tilde{g}|^p\dx\right).
	\end{nalign}
	Next, applying the Poincar\'e inequality with a constant $C = C(|\Omega|,p)$ 
	\begin{nalign}
		\int_\Omega|\nabla \tilde{\psi}|^p \dx \geqslant C \int_\Omega|\tilde{\psi}|^p \dx
	\end{nalign}
	for each $\gamma$ large enough, we obtain
	\begin{nalign}
		\int_\Omega |\nabla\tilde{\psi}|^p \dx \leqslant C \left( \int_\Omega|\tilde{\pmb f}|^p\dx + \int_\Omega|\tilde{g}|^p \dx	 \right).
	\end{nalign}
	Consequently using \eqref{eq:LinearSystemUVMassEstiamte} and \eqref{eq:GradP} we estimate
	\begin{nalign}\rev{
		\|\psi\|_\W1p \leqslant \|\tilde{\psi}\|_\W1p + |\Omega|^\frac{1}{p}\left|\frac{1}{|\Omega|}\int_\Omega \psi \dx \right| \leqslant	 K\Big(\|\pmb f\|_\Lp{} + \|g\|_\Lp{} \big).}
	\end{nalign}
	Finally, we get the required estimate for $\pmb \varphi$ from inequalities \eqref{eq:LinearSystemPointwiseUEstimate} and \eqref{eq:LinearSystemUVMassEstiamte}.
\end{proof}

\begin{theorem}
	\label{thm:ResEsNonCon}	
	Assume that constant coefficient matrices $\pmb A_0$, $\pmb B_0$, $\pmb C_0$, $d_0$ satisfy
	\begin{nalign}
		\label{eq:ThmNeq}
		s(\pmb A_0)<0, \quad s\begin{pmatrix} 
			\pmb A_0 & \pmb B_0 \\ \pmb C_0 & d_0
		\end{pmatrix}<0
	\end{nalign} 	
	and fix $\gamma>0$ so large as required in Lemma \ref{thm:LinearSystemEstimatesForUVByFG}. Let $\big(\mathcal{L}_p, D(\mathcal{L}_p)\big)$ be the operator defined in \eqref{eq:LinearSystemOperatorLDefinition} with matrices $\pmb A(\cdot)$, $\pmb B(\cdot)$, $\pmb C(\cdot)$, $d(\cdot)$  as in \eqref{eq:LinearSystemCoefficientMatrixDefinition}-\eqref{abcd}.
 	Assume, moreover, that 
 	\begin{nalign}
 		\label{eq:Acon}
 		s(\pmb A(\cdot))<0. 
 	\end{nalign}
 	There \rev{exists} $\rho>0$ such that if
	\begin{nalign}
		\label{MatRho}
		\|\pmb A - \pmb A_0\|_\Lp{N} +
		\|\pmb B - \pmb B_0\|_\Lp{N} +
		\|\pmb C - \pmb C_0\|_\Lp{N} +
		\|d - d_0\|_\Lp{N} \leqslant\rho
	\end{nalign}	 
	then $s\left(\L_p\right)<0$.  
\end{theorem}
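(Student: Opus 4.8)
\noindent\emph{Idea of the proof.}\ The plan is to establish that, for $\rho>0$ small enough, the whole closed half-plane $\{\lambda\in\C:\Re\lambda\geqslant0\}$ belongs to the resolvent set of $\L_p$, with $\|(\L_p-\lambda)^{-1}\|$ bounded there uniformly in $\lambda$; continuing the resolvent across the imaginary axis by a Neumann series then yields $s(\L_p)<0$. (Alternatively: by \eqref{thm:SpectrumOfOperatorL} and \eqref{eq:Acon} the multiplication part $\sigma(\pmb A(\cdot))$ already lies in $\{\Re\lambda<0\}$, so it suffices to exclude the eigenvalues of $\L_p$ from $\{\Re\lambda\geqslant0\}$, which the construction below achieves as a by-product.) The key preliminary observation is that \eqref{eq:Acon} with \eqref{eq:InvEs2}, and \eqref{eq:ThmNeq} with \eqref{eq:InvEs}, guarantee that $\pmb A_0-\lambda\pmb I$ and $\pmb A(x)-\lambda\pmb I$ (the latter for a.e.\ $x\in\Omega$) are invertible, with $|(\pmb A_0-\lambda\pmb I)^{-1}|+|(\pmb A(x)-\lambda\pmb I)^{-1}|\leqslant C$ uniformly for $\Re\lambda\geqslant0$.

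Fixing $\lambda$ with $\Re\lambda\geqslant0$ and data $(\pmb f,g)\in\Lp{}^{n+1}$, I would use this invertibility to solve the first line of $(\L_p-\lambda)(\pmb\varphi,\psi)=(\pmb f,g)$ for $\pmb\varphi=(\pmb A-\lambda\pmb I)^{-1}(\pmb f-\pmb B\psi)$ and substitute it into the second line, reducing the system to the scalar elliptic equation $\gamma\Delta_\nu\psi+q(\cdot,\lambda)\psi=\hat g$ with $q(\cdot,\lambda)=d-\lambda-\pmb C(\pmb A-\lambda\pmb I)^{-1}\pmb B$ and $\hat g=g-\pmb C(\pmb A-\lambda\pmb I)^{-1}\pmb f\in\Lp{}$, $\|\hat g\|_\Lp{}\leqslant\|g\|_\Lp{}+C\|\pmb f\|_\Lp{}$. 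Next I would split the coefficient as $q(\cdot,\lambda)=q_0(\lambda)+r(\cdot,\lambda)$, with the $x$-independent part $q_0(\lambda)=d_0-\lambda-\pmb C_0(\pmb A_0-\lambda\pmb I)^{-1}\pmb B_0$ (the determinant ratio of \eqref{eq:DetIden} formed from $\pmb A_0,\pmb B_0,\pmb C_0,d_0$). Telescoping the difference $\pmb C(\pmb A-\lambda\pmb I)^{-1}\pmb B-\pmb C_0(\pmb A_0-\lambda\pmb I)^{-1}\pmb B_0$ via the identity $(\pmb A-\lambda\pmb I)^{-1}-(\pmb A_0-\lambda\pmb I)^{-1}=(\pmb A-\lambda\pmb I)^{-1}(\pmb A_0-\pmb A)(\pmb A_0-\lambda\pmb I)^{-1}$ and invoking the uniform resolvent bounds together with \eqref{MatRho} would give $\|r(\cdot,\lambda)\|_\Lp{N}\leqslant C\rho$ uniformly in $\lambda$.

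I would then invoke Lemma~\ref{thm:LinearSystemEstimatesForUVByFG} with $\pmb f=0$; eliminating there $\pmb\varphi=-(\pmb A_0-\lambda\pmb I)^{-1}\pmb B_0\psi$ turns \eqref{eq:LinearSystemEstimatesForUV} into $\gamma\Delta_\nu\psi+q_0(\lambda)\psi=G$, so the lemma yields, for every $\Re\lambda\geqslant0$, a unique solution $\psi=:S_\lambda G\in\W1p$ with $\|S_\lambda G\|_\W1p\leqslant K\|G\|_\Lp{}$, the constant $K$ independent of $\lambda$; in particular $q_0(\lambda)\neq\gamma\mu_k$ for all $k$, and $S_\lambda G\in\W2p$ by elliptic regularity. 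The reduced equation then reads $\psi+S_\lambda(r(\cdot,\lambda)\psi)=S_\lambda\hat g$ in $\W1p$. By Hölder's inequality together with the Sobolev embedding $\W1p\hookrightarrow L^{Np/(N-p)}(\Omega)$ for $p<N$ — and, for $p\geqslant N$, by interpolating $\|r(\cdot,\lambda)\|_\Lp{N}\leqslant C\rho$ with the uniform bound $\|r(\cdot,\lambda)\|_\Li\leqslant C$ coming from the bounded coefficients — one gets $\|S_\lambda(r(\cdot,\lambda)\psi)\|_\W1p\leqslant K\|r(\cdot,\lambda)\psi\|_\Lp{}\leqslant\theta(\rho)\|\psi\|_\W1p$ with $\theta(\rho)\to0$ as $\rho\to0$, uniformly in $\lambda$. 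Hence, choosing $\rho$ with $\theta(\rho)<1$, the map $\psi\mapsto\psi+S_\lambda(r(\cdot,\lambda)\psi)$ is boundedly invertible on $\W1p$ uniformly in $\lambda$, so $\|\psi\|_\W1p+\|\pmb\varphi\|_\Lp{}\leqslant C(\|\pmb f\|_\Lp{}+\|g\|_\Lp{})$; one verifies $(\pmb\varphi,\psi)\in D(\L_p)$ (elliptic regularity raises $\psi$ to $\W2p$) and uniqueness. Since $\L_p$ is closed, this shows $\{\Re\lambda\geqslant0\}$ lies in its resolvent set with $\sup_{\Re\lambda\geqslant0}\|(\L_p-\lambda)^{-1}\|=:M<\infty$, whence the resolvent set contains $\{\Re\lambda>-1/M\}$ and $s(\L_p)\leqslant-1/M<0$.

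The step I expect to be the main obstacle — and the one that explains both why \eqref{MatRho} is measured in the $L^N$-norm and why the argument must pass through the scalar equation — is the following: a naive Neumann-series perturbation of $\L_p$ off the constant-coefficient operator directly in $\Lp{}^{n+1}$ does not close, because the perturbation terms carry the ODE component $\pmb\varphi$, which has no regularity beyond $\Lp{}$, so products such as $(\pmb A-\pmb A_0)\pmb\varphi$ with $\pmb A-\pmb A_0\in L^N(\Omega)$ lie only in $L^r(\Omega)$ with $r<p$. Eliminating $\pmb\varphi$ first — legitimate precisely because \eqref{eq:Acon} makes $\pmb A(x)-\lambda\pmb I$ boundedly invertible a.e.\ on $\{\Re\lambda\geqslant0\}$ — moves the analysis onto $\psi\in\W2p$, where the gained Sobolev regularity $\W1p\hookrightarrow L^{Np/(N-p)}$ exactly balances the $L^N$-smallness of the coefficient perturbations via Hölder. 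The secondary difficulty, keeping every estimate uniform in $\lambda$ over the whole half-plane so as to upgrade $s(\L_p)\leqslant0$ to $s(\L_p)<0$, is precisely what Lemma~\ref{thm:LinearSystemEstimatesForUVByFG} is built to supply; it transfers first to the scalar resolvent $(\gamma\Delta_\nu+q_0(\lambda))^{-1}$ and then, through the contraction, to $(\L_p-\lambda)^{-1}$.
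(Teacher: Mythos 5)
Your proposal is correct and rests on the same mechanism as the paper's proof: reduce to the constant-coefficient problem of Lemma~\ref{thm:LinearSystemEstimatesForUVByFG}, eliminate $\pmb\varphi$ through the uniformly bounded inverse $(\pmb A(x)-\lambda\pmb I)^{-1}$ guaranteed by \eqref{eq:Acon} and \eqref{eq:InvEs2}, and absorb the variable-coefficient perturbation by H\"older with the $L^N$-smallness \eqref{MatRho} against the Sobolev gain $\W1p\hookrightarrow L^{Np/(N-p)}$ (with the same separate treatment of $p\geqslant N$). The only substantive difference is that the paper works at the level of the system and merely shows the eigenvalue equation \eqref{eq:LinearEigenvalueEquation2N} has only the trivial solution for $\Re\lambda\geqslant0$, invoking \eqref{thm:SpectrumOfOperatorL} to dispose of the rest of the spectrum, whereas you reduce to a scalar equation and run an explicit Neumann-series inversion yielding a resolvent bound uniform over the closed half-plane; your version is slightly more careful on the final step, since a uniform bound (implicit in the paper's $\lambda$-independent constants) is what upgrades ``$\sigma(\L_p)\cap\{\Re\lambda\geqslant0\}=\emptyset$'' to the strict inequality $s(\L_p)<0$.
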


\begin{proof}
	By equation \eqref{thm:SpectrumOfOperatorL}, it suffices to show that for all $\lambda \in \C$ with $\Re \lambda \geqslant 0$, the zero solution is the only solution to the problem
	\begin{nalign}
		\label{eq:LinearEigenvalueEquation2N}
		\pmb A  \pmb \varphi + \pmb B \psi &= \lambda \pmb \varphi, && \text{for } x\in \overline{\Omega}, && \\
		\gamma \Delta_\nu \psi + \pmb C \pmb \varphi + d \psi  &= \lambda \psi, && \text{for } x\in \Omega.  
	\end{nalign}
	Notice that for each $\lambda \in \C$ with $\Re \lambda \geqslant 0$, by inequality \eqref{eq:InvEs2} the function
	\begin{nalign}
	\pmb \varphi(x) = - (\pmb A(x) -\lambda \pmb I)^{-1} \pmb B(x) \psi(x)
	\end{nalign}
	satisfies
	\begin{align}
		\label{eq:LinearEigenvalueEquationUByVEstimate}
		| {\pmb \varphi}(x)|  \leqslant \big|(\pmb A -\lambda \pmb I)^{-1} \pmb B \psi(x) \big|   \leqslant  C  |\psi(x)|,
	\end{align}
	for all $x\in \Omega$. 
	Now, we rewrite problem \eqref{eq:LinearEigenvalueEquation2N} in the following form
	\begin{nalign}
		\label{eq:LinearEigenvalueEquationSeparate2N}
		(\pmb A_0 -\lambda \pmb I) \pmb \varphi + \pmb B_0 \psi &= - (\pmb A - \pmb A_0) \pmb \varphi  - (\pmb B -\pmb B_0) \psi, && \text{for } x\in \overline{\Omega}, 	&&\\
		\gamma \Delta_\nu \psi + \pmb C_0 \pmb \varphi + (d_0 - \lambda) \psi &= - (\pmb C - \pmb C_0) \pmb \varphi - (d - d_0) \psi, && \text{for } x\in \Omega. &&
	\end{nalign}
	By Lemma \ref{thm:LinearSystemEstimatesForUVByFG}, there exists a constant $C >0$ such that the solution of equation \eqref{eq:LinearEigenvalueEquationSeparate2N} satisfies 
	\begin{nalign}
		\|\pmb \varphi\|_\Lp{} + \|\psi\|_\W1p & \leqslant C \left( \|(\pmb A - \pmb A_0)\pmb \varphi\|_\Lp{} + \|(\pmb B -\pmb B_0)  \psi\|_\Lp{}\right) \\ & + C \left( \| (\pmb C - \pmb C_0) \pmb \varphi\|_\Lp{}	+  \| (d - d_0) \psi\|_\Lp{}\right).
	\end{nalign}
	For \rev{$N> p$}, by the H\"older and Sobolev inequalities, we estimate
	\begin{nalign}
		\| (\pmb B -\pmb B_0) \psi\|_\Lp{} &+ \| (d - d_0) \psi\|_\Lp{} \\ &\leqslant  \|\psi\|_\Lp{\frac{Np}{N-p}} \left( \| (\pmb B -\pmb B_0)\|_\Lp{N} + \|(d - d_0)\|_\Lp{N} \right) 	\\ &\leqslant  C \|\psi\|_\W1p \left(\| (\pmb B -\pmb B_0)\|_\Lp{N} + \|(d - d_0)\|_\Lp{N}  \right)
	\end{nalign}
	and similarly, by the inequality \eqref{eq:LinearEigenvalueEquationUByVEstimate} combined with the Sobolev inequality, we obtain
	\begin{nalign}
		\|(\pmb A - \pmb A_0) \pmb \varphi\|_\Lp{} &+ \| (\pmb C - \pmb C_0) \pmb \varphi\|_\Lp{}\\ &\leqslant  \|\pmb \varphi\|_\Lp{\frac{Np}{N-p}} \big( \|(\pmb A - \pmb A_0) \|_\Lp{N} 	+ \| (\pmb C - \pmb C_0)\|_\Lp{N} \big) \\ &\leqslant  C \|v\|_\W1p \left(\|(\pmb A - \pmb A_0) \|_\Lp{N} + \|\pmb (\pmb C - \pmb C_0)\|_\Lp{N} \right).
	\end{nalign}  
	Choosing $\|(\pmb A - \pmb A_0)\|_\Lp{N}+\|(\pmb B - \pmb B_0)\|_\Lp{N}+\|\pmb (\pmb C - \pmb C_0)\|_\Lp{N} +\|d - d_0\|_\Lp{N}$ sufficiently small, we conclude that $\pmb \varphi =0$ and $\psi = 0$.
	 
	\rev{By a similar reasoning, an analogous inequality can be obtained for $p\geq N$ using the embedding $W^{1,p}(\Omega)\subset L^q(\Omega)$ for each $q\geq p$.
	The proof of the inequality  $s(\L_p)<0$  for each $p\in(1,\infty)$ 
	may be also completed by using the fact that 
	 the spectrum of the operator $\L_p$ is independent of $p$, see inequality \eqref{thm:SpectrumOfOperatorL} and the comment following it.}
\end{proof}

In the particular case of one ODE coupled with one reaction-diffusion equation, we can formulate sufficient conditions for stability for every diffusion coefficient $\gamma>0$. 

\begin{prop}[One PDE coupled with one ODE]
	\label{thm:LinearStabilitystantNoncoefficientsN1}
	Let $\big( \L_p, \, D(\L_p) \big)$ be the operator given by formula \eqref{eq:LinearSystemOperatorLDefinition}. Assume that $n=1$, thus $\pmb A(x)= a(x)$, $\pmb B(x)= b(x)$, $\pmb C(x)= c(x)$, $d(x)$ are essentially bounded functions.  If
	\begin{nalign}
		s\big( a(\cdot)\big)<0 , \quad s\begin{spm} 
			a(\cdot) & b(\cdot) \\ c(\cdot) & d(\cdot)
		\end{spm}<0, \quad s\big(d(\cdot)\big)<0
	\end{nalign}  
	then $s(\L_p) <0$. 
\end{prop}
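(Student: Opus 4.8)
The proof rests on the spectral decomposition \eqref{thm:SpectrumOfOperatorL}, which for $n=1$ reads $\sigma(\L_p)=\sigma\big(a(\cdot)\big)\cup\{\text{eigenvalues of }\L_p\}$, together with the fact (recalled after \eqref{thm:SpectrumOfOperatorL}) that this spectrum does not depend on $p$. Since the assumption $s\big(a(\cdot)\big)<0$ takes care of the first set, and since I may therefore fix $p=2$, it suffices---arguing as in the proof of Theorem~\ref{thm:ResEsNonCon}---to show that no $\lambda\in\C$ with $\Re\lambda\geqslant 0$ is an eigenvalue of $\L_2$, i.e.\ that the only pair $(\varphi,\psi)\in L^2(\Omega)\times W^{2,2}_\nu(\Omega)$ solving
\begin{nalign}
	\label{pr:eig}
	a\varphi+b\psi=\lambda\varphi,\qquad \gamma\Delta_\nu\psi+c\varphi+d\psi=\lambda\psi
\end{nalign}
is the trivial one.

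Fix such a $\lambda=\sigma+i\tau$ with $\sigma=\Re\lambda\geqslant 0$. Since $\Re\big(a(x)-\lambda\big)=a(x)-\sigma\leqslant a(x)\leqslant s\big(a(\cdot)\big)<0$ for a.e.\ $x$, the scalar $a(x)-\lambda$ is nonzero and $\big|a(x)-\lambda\big|\geqslant\big|s(a(\cdot))\big|$ uniformly in $\lambda$; hence the first equation of \eqref{pr:eig} gives $\varphi=-(a-\lambda)^{-1}b\,\psi\in L^2(\Omega)$. Substituting into the second equation yields the scalar elliptic problem
\begin{nalign}
	\label{pr:scalar}
	\gamma\Delta_\nu\psi+m(x,\lambda)\,\psi=0,\qquad m(x,\lambda):=d(x)-\lambda-\frac{b(x)c(x)}{a(x)-\lambda}\in L^\infty(\Omega),
\end{nalign}
to be understood in the weak sense on $W^{1,2}(\Omega)$. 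Testing \eqref{pr:scalar} with $\overline{\psi}$ and taking real parts gives the energy identity
\begin{nalign}
	\label{pr:energy}
	\gamma\int_\Omega|\nabla\psi|^2\dx=\int_\Omega\Re m(x,\lambda)\,|\psi(x)|^2\dx .
\end{nalign}

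The heart of the matter is the pointwise bound $\Re m(x,\lambda)<0$ for a.e.\ $x\in\Omega$ and every $\lambda$ with $\Re\lambda\geqslant 0$, which I would prove by distinguishing the sign of $b(x)c(x)$. A direct computation gives
\begin{nalign}
	\Re m(x,\lambda)=d(x)-\sigma+\frac{b(x)c(x)\,\big(\sigma-a(x)\big)}{\big(\sigma-a(x)\big)^2+\tau^2}.
\end{nalign}
If $b(x)c(x)\leqslant 0$, the last fraction is $\leqslant 0$ (because $\sigma-a(x)>0$), so $\Re m(x,\lambda)\leqslant d(x)-\sigma\leqslant d(x)\leqslant s\big(d(\cdot)\big)<0$. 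If $b(x)c(x)>0$, then, using $\big(\sigma-a(x)\big)^2+\tau^2\geqslant\big(\sigma-a(x)\big)^2$ and $\sigma-a(x)\geqslant -a(x)>0$, one bounds that fraction by $b(x)c(x)/\big(\sigma-a(x)\big)\leqslant -b(x)c(x)/a(x)$, whence
\begin{nalign}
	\Re m(x,\lambda)\leqslant d(x)-\sigma-\frac{b(x)c(x)}{a(x)}\leqslant d(x)-\frac{b(x)c(x)}{a(x)}=\frac{1}{a(x)}\det\begin{pmatrix}a(x)&b(x)\\ c(x)&d(x)\end{pmatrix},
\end{nalign}
the last equality being the $n=1$ instance of \eqref{eq:DetIden} at $\lambda=0$. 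Now $s\begin{spm}a(\cdot)&b(\cdot)\\ c(\cdot)&d(\cdot)\end{spm}<0$ forces $\det\begin{spm}a(x)&b(x)\\ c(x)&d(x)\end{spm}>0$ for a.e.\ $x$ (a real $2\times2$ matrix all of whose eigenvalues have negative real part has positive determinant), while $a(x)<0$; hence the right-hand side is negative a.e. In both cases $\Re m(\cdot,\lambda)<0$ a.e., so \eqref{pr:energy} forces $\psi=0$ a.e., and then $\varphi=-(a-\lambda)^{-1}b\psi=0$, contradicting that $(\varphi,\psi)$ is an eigenfunction. Therefore $s(\L_p)<0$.

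The only delicate point I anticipate is this sign analysis of $\Re m(x,\lambda)$, in particular the case $b(x)c(x)>0$, where the term $-b(x)c(x)/(a(x)-\lambda)$ enters with the ``wrong'' sign and must be absorbed using \emph{both} the determinant condition and $a(x)<0$; the elementary estimates $\big(\sigma-a(x)\big)^2+\tau^2\geqslant\big(\sigma-a(x)\big)^2$ and $\sigma-a(x)\geqslant -a(x)$ are exactly what make the bound uniform over all $\lambda$ with $\Re\lambda\geqslant 0$. Note that, unlike in Theorem~\ref{thm:ResEsNonCon}, no largeness of $\gamma$ is needed here: the identity \eqref{pr:energy} holds for every $\gamma>0$, and the nonnegativity of its left-hand side already closes the argument.
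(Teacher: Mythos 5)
Your argument is correct in substance and follows the same overall strategy as the paper's proof: use \eqref{thm:SpectrumOfOperatorL} and $p$-independence to reduce to the eigenvalue problem, eliminate $\varphi=-b\psi/(a-\lambda)$, and show that the real part of the effective potential in the resulting scalar problem $\gamma\Delta_\nu\psi+m(\cdot,\lambda)\psi=0$ is negative a.e. (your $\Re m(x,\lambda)$ is exactly the paper's $h(x,\lambda)$ from \eqref{eq:LinearSystemHLambdaFunctionDefinition}). The execution of that key sign analysis is genuinely different, though. The paper first treats real $\lambda=\alpha$, where $h$ equals a determinant divided by $a-\alpha$ and is bounded by some $-h_0<0$ via the determinant condition, and then passes to complex $\lambda$ by observing that $h(x,\alpha+i\beta)$ is monotone in $\beta^2$ with limit $d(x)-\alpha$ as $\beta\to\infty$ --- which is where $s(d(\cdot))<0$ enters. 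You instead split on the sign of $b(x)c(x)$ and estimate directly for all complex $\lambda$ at once: $s(d(\cdot))<0$ closes the case $bc\leqslant0$, while for $bc>0$ the elementary bounds $(\sigma-a)^2+\tau^2\geqslant(\sigma-a)^2$ and $\sigma-a\geqslant-a$ reduce everything to the sign of $a^{-1}\det\begin{spm}a&b\\ c&d\end{spm}$, i.e.\ to \eqref{eq:DetIden} at $\lambda=0$. Both routes use all three hypotheses and neither needs large $\gamma$; yours replaces the monotonicity-in-$\beta^2$ observation (which the paper remarks fails for $n>1$) by explicit inequalities, and replaces the appeal to a ``well-known'' uniqueness fact by the energy identity, so it is somewhat more self-contained.

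One point needs patching. Excluding the spectrum from the closed half-plane $\{\Re\lambda\geqslant0\}$ only yields $s(\L_p)\leqslant0$: since the spectral bound is a supremum, eigenvalues could a priori accumulate at the imaginary axis from the left. This is precisely why the paper runs the whole argument for $\Re\lambda\geqslant-\kappa$ with a small $\kappa>0$. Your estimates are in fact uniform --- in the first case $\Re m\leqslant s(d(\cdot))<0$, and in the second $\Re m$ is bounded above by $-s\begin{spm}a(\cdot)&b(\cdot)\\ c(\cdot)&d(\cdot)\end{spm}^2/\|a\|_\Li$, both independent of $x$ and of $\lambda$ in the half-plane --- so by continuity in $\sigma=\Re\lambda$ they persist on a strip $\Re\lambda\geqslant-\kappa$ for $\kappa>0$ small enough, and the strict inequality $s(\L_p)<0$ follows. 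You should make this final step explicit.
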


\begin{proof}
	Since $s(a(\cdot))<0$, by equation \eqref{thm:SpectrumOfOperatorL},  it suffices to study only eigenvalues of the operator $\L_p$. We prove that $(\varphi,\psi) = (0,0)$ is the only solution of the system
	\begin{nalign}
		\label{eq:LinearEigenvalueEquation1N}
		a \varphi + b \psi &= \lambda  \varphi, 					   && x\in \overline{\Omega}, \\
		\gamma \Delta_\nu \psi +  c\varphi + d\psi &= \lambda \psi, && x\in {\Omega}
	\end{nalign}
	for all $\lambda \in \C$ with $\Re \lambda \geqslant -\kappa$ for some
	\begin{nalign} 
		\kappa \in \bigg(0,\ \min \left(-s\big(a(\cdot)\big),-s\begin{spm} 
			a(\cdot) & b(\cdot) \\ c(\cdot)& d(\cdot)
		\end{spm}, -s\big(d(\cdot)\big) \right)\bigg).
	\end{nalign} 
	Since $a(x)\leqslant s(a(\cdot)) <0$ almost everywhere, first equation in \eqref{eq:LinearEigenvalueEquation1N} implies  
	\begin{align}
		\label{eq:LinearEigenvalueUInverse1N}
		\varphi = -\frac{b}{a-\lambda} \psi,\quad \text{for} \quad x\in \Omega. 
	\end{align} 
	Substituting this expression into second equation in \eqref{eq:LinearEigenvalueEquation1N} we obtain
	\begin{align}
		\label{eq:LinearEigenvalueVEquation}
		\gamma\Delta_\nu \psi + \Big(-\frac{cb}{a - \lambda } + d - \lambda\Big) \psi = 0, \quad \text{for} \quad x\in \Omega.
	\end{align}	
	We introduce the function
	\begin{nalign}
		\label{eq:LinearSystemHLambdaFunctionDefinition}
		h(x,\lambda) &\equiv \Re \Big(-\frac{c(x)b(x)}{a(x) - \lambda } + d(x) - \lambda\Big) \\
		&= -\frac{c(x)b(x)a(x) - c(x)b(x)\Re \lambda}{(a(x)-\Re \lambda)^2 + \operatorname{Im}^2\lambda} + d(x) - \Re \lambda,
	\end{nalign}
	which, by equation \eqref{eq:DetIden}, is also given by the formula
		\begin{nalign}
		\label{eq:HDetDef}
		h(x,\lambda) &= \Re \left(\frac{1}{a(x)-\lambda}{\det \begin{pmatrix}
				a(x) -\lambda & b(x) \\ c(x) & d(x) - \lambda
		\end{pmatrix}}\right).
	\end{nalign}
	It is well-known that zero is the only solution of equation \eqref{eq:LinearEigenvalueVEquation} if  	
	\begin{nalign}
		\esssup_{x\in \Omega, \; \lambda \in \C, \ \Re \lambda \geqslant -\kappa} h(x,\lambda) <0.
	\end{nalign} 
	\rev{In order to prove that this inequality is satisfied, first we consider $\lambda = \alpha \geqslant -\kappa$.} It follows from assumption on $\alpha$ that
	\begin{nalign}
		s\begin{pmatrix} 
			a(x) - \alpha & b(x) \\ c(x)& d(x) - \alpha
		\end{pmatrix}<0
	\end{nalign}
	implies
	\begin{nalign} 
		\det \begin{pmatrix} 
			a(x) - \alpha & b(x) \\ c(x) & d(x) - \alpha
		\end{pmatrix} \geqslant s^2\begin{pmatrix} 
			a(x) - \alpha & b(x) \\ c(x)& d(x) - \alpha
		\end{pmatrix} >0 
	\end{nalign}
	for almost all $x\in \Omega$. Thus, the inequalities $a(x)\leqslant s(a(\cdot))<-\alpha<0$ \rev{yields} 
	\begin{nalign}
		\label{eq:LinearSytemHfunctionEstimation}
		h(x,\alpha) &= \frac{\det \begin{pmatrix}
		a(x) -\alpha & b(x) \\ c(x) & d(x) - \alpha
		\end{pmatrix}}{a(x)-\alpha}
		&\leqslant -\frac{\left(s\begin{pmatrix} 
		a(\cdot) & b(\cdot) \\ c(\cdot)& d(\cdot)
		\end{pmatrix} - \alpha\right)^2}{ |s(a)| + |\alpha|}. 
	\end{nalign} 
		Since $s\begin{spm} a(\cdot) & b(\cdot) \\ c(\cdot)& d(\cdot)
		\end{spm} < 0$ it is clear that there exist $h_0>0$ such that 
		\begin{nalign}
			\rev{-\frac{\left(s\begin{pmatrix} 
			a(\cdot) & b(\cdot) \\ c(\cdot)& d(\cdot)
			\end{pmatrix} - \alpha\right)^2}{ \|a\|_\infty + |\alpha|}} < 	-h_0 < 0 \quad \text{for all} \quad \alpha \geqslant -\kappa.
		\end{nalign}
	
		Finally, by formula \eqref{eq:LinearSystemHLambdaFunctionDefinition}, the function $h(x,\alpha + i\beta)$ with fixed $\alpha\geqslant-\kappa$ is even with respect to $\beta\in \R$  and monotone with respect to $\beta^2$. Thus, for the number $h_0 >0$ defined above and by formula \eqref{eq:LinearSystemHLambdaFunctionDefinition} we have
		\begin{nalign}
			h(x,\lambda) &\leqslant \max(s(d(\cdot)), -h_0) < 0,
		\end{nalign}
		for all $\lambda = \alpha + i \beta$ with $\alpha \geqslant -\kappa$ and $\beta\in \R$ and almost all $x\in \Omega$. 
\end{proof}

\begin{rem}
	Proposition \ref{thm:LinearStabilitystantNoncoefficientsN1} can not be directly extended to systems with more than one ODE because, \rev{in this case, 
	the counterpart of 
	the function $h(x,\alpha + i\beta)$ defined in \eqref{eq:LinearSystemHLambdaFunctionDefinition}}
	 is no longer monotone with respect to $\beta>0$. 
\end{rem}

\section{Nonlinear stability} 

\label{sec:Nonlinear Stability}
Using stability results for the linear system from the previous section, we are in a position to  study a stability of the zero solution to the nonlinear problem
\begin{nalign}
	\label{eq:NonlinearStabilityLinearisedProblemWithRest}
	\frac{\partial}{\partial t}
	\begin{pmatrix}
		\pmb \varphi \\ \psi
	\end{pmatrix} 
	= \begin{pmatrix}
		0 \\ \gamma\Delta_\nu \psi
	\end{pmatrix} + 
	\begin{pmatrix}
		\pmb A & \pmb B \\ \pmb C & d
	\end{pmatrix} 
	\begin{pmatrix}
		\pmb \varphi \\ \psi
	\end{pmatrix} 
	+ 
	\begin{pmatrix}
		\pmb R_1  (\pmb \varphi, \psi) \\ R_2(\pmb \varphi, \psi)
	\end{pmatrix}
	&& \text{for} \quad x\in {\Omega}, \quad t>0,
\end{nalign}
with arbitrary $x$-dependent  matrices  $\pmb A$,   $\pmb B$,   $\pmb C$,   and $d$ of the form 
\eqref{eq:LinearSystemCoefficientMatrixDefinition}--\eqref{abcd} and with the remainders
\begin{nalign}
	\pmb R_1 = \begin{pmatrix}
		R_{11}(\pmb \varphi, \psi) \\ \vdots \\ R_{1n}(\pmb \varphi, \psi)
	\end{pmatrix} \quad \text{and} \quad R_2 = R_2(\pmb \varphi, \psi),
\end{nalign}
satisfying the following typical assumption.

\begin{assumption}
	\label{ass:SquareEs}
	There exist constants $K>0$ and $C_K=C(K)>0$ such that for all $ {\pmb \varphi}\in \R^n$ and ${\psi} \in \R$ satisfying $|\pmb \varphi| + |\psi| \leqslant K$ we have 
	\begin{nalign}
		\retainlabel{RemEs}
		|\pmb R_1(\pmb \varphi, \psi)| \leqslant C_K \big( |\pmb \varphi|^2 + |\psi|^2 \big) \quad \text{and}\quad | R_2(\pmb \varphi, \psi)| \leqslant C_K \big( |\pmb \varphi|^2 + |\psi|^2 \big). 
	\end{nalign} 
\end{assumption}

We supplement system~\eqref{eq:NonlinearStabilityLinearisedProblemWithRest} with an arbitrary initial condition 
\begin{nalign} 
	\label{eq:IniNonLin}
	\pmb \varphi(\cdot,0) =\pmb \varphi_0 \in \Li,\quad \psi(\cdot,0) = \psi_0 \in C(\overline{\Omega}),
\end{nalign} 
and we assume that initial value problem \eqref{eq:NonlinearStabilityLinearisedProblemWithRest}-\eqref{eq:IniNonLin} has a unique global-in-time mild solution 
\begin{nalign}
	\label{eq:GitSol}
	\pmb \varphi \in C\big([0,\,\infty),\, \Li^n \big)\quad \text{and} \quad \psi\in C\big([0,\,\infty), \, C(\oo)\big),
\end{nalign}
namely, the functions $(\pmb \varphi,\psi)$ satisfying equations \eqref{eq:PointEsPhi} and \eqref{eq:PsiLiEs}, below. Such a unique local-in-time solution can be constructed following the reasoning from the Theorem~\ref{thm:ExPar} below and it can be extended to a global-in-time solution using \textit{a priori} estimates below. 

\begin{theorem}[Stability of zero solution]
	\label{thm:NonlinearStability}
	Assume that
	\begin{itemize}
		\item $s(\L_p)<0$, where $\L_p$ is defined by \eqref{eq:LinearSystemOperatorLDefinition},
		\item $s\big(d(\cdot)\big) <0$,		
		\item Assumption \ref{ass:SquareEs} holds true for some $K>0$.
	\end{itemize}
	Then the zero solution to system \eqref{eq:NonlinearStabilityLinearisedProblemWithRest}-\eqref{eq:IniNonLin} is exponentially asymptotically stable in $\Li^{n+1}$. More precisely, there exist $C\geq 1$  and $k>0$ such that if
	\begin{nalign}
		\|\pmb \varphi_0\|_\Li + \|\psi_0\|_\Li <\frac{K}{4C^2},
	\end{nalign} 	
	then
	\begin{nalign}
		\label{eq:NonDec}
		\|\pmb \varphi(t)\|_\Li + \|\psi(t)\|_\Li \leqslant C \big(\|\pmb \varphi_0\|_\Li + \|\psi_0\|_\Li\big) e^{-\frac{k}{2} t},
	\end{nalign}
	for all $t\geq 0$.
\end{theorem}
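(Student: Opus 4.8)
The statement is a linearisation principle, and I would prove it in two movements: first, upgrade the hypothesis $s(\L_p)<0$, which lives in $\Lp{}$, to an exponential decay estimate for the \emph{linear} evolution measured in $\Li$; second, feed this into a Duhamel-plus-continuity argument exploiting the pointwise quadratic bound of Assumption~\ref{ass:SquareEs}. For the $\Lp{}$-part: $\big(\L_p,D(\L_p)\big)$ generates an analytic semigroup on $\Lp{}^{n+1}$, and for analytic semigroups the growth bound equals the spectral bound, so $s(\L_p)<0$ yields $M\geqslant1$ and $\beta\in\big(0,-s(\L_p)\big)$ with $\big\|e^{t\L_p}w_0\big\|_\Lp{}\leqslant Me^{-\beta t}\|w_0\|_\Lp{}$ for all $w_0$. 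Since $\sigma(\L_p)$ is independent of $p$ (cf.\ the comments after \eqref{thm:SpectrumOfOperatorL}), I may and do fix $p>N/2$.

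\textbf{Linear decay in $\Li$.} Let $(\pmb\varphi,\psi)$ solve \eqref{eq:NonlinearStabilityLinearisedProblemWithRest} with $\pmb R_1=R_2=0$ and data of size $D_0:=\|\pmb\varphi_0\|_\Li+\|\psi_0\|_\Li$. Since $\sigma\big(\pmb A(\cdot)\big)\subseteq\sigma(\L_p)$ by \eqref{thm:SpectrumOfOperatorL}, one has $s\big(\pmb A(\cdot)\big)<0$, whence \eqref{RTG} gives $\big|e^{\pmb A(x)t}\big|\leqslant Ce^{-kt}$ uniformly for almost every $x$. Put $\kappa_0:=-s\big(d(\cdot)\big)=-\esssup_x d(x)>0$ (using \eqref{SpecBouChar}); from $d\leqslant-\kappa_0$ and the positivity and $L^q$-contractivity (resp.\ ultracontractivity) of the Neumann heat semigroup one gets $\big\|e^{t(\gamma\Delta_\nu+d)}f\big\|_\Lp{q}\leqslant e^{-\kappa_0 t}\|f\|_\Lp{q}$ for all $q\in[1,\infty]$ and $\big\|e^{t(\gamma\Delta_\nu+d)}f\big\|_\Li\leqslant Ce^{-\kappa_0 t}\max\big(1,t^{-N/(2p)}\big)\|f\|_\Lp{}$. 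Feeding the $\Lp{}$-decay $\|\pmb\varphi(s)\|_\Lp{}\leqslant M|\Omega|^{1/p}e^{-\beta s}D_0$ into the variation-of-constants formula \eqref{eq:PsiLiEs} for $\psi$ and using that $(t-s)^{-N/(2p)}$ is integrable near $s=t$ (as $p>N/2$) gives $\|\psi(t)\|_\Li\leqslant Ce^{-\omega_1 t}D_0$ for some $\omega_1>0$; substituting this into the pointwise representation \eqref{eq:PointEsPhi} of $\pmb\varphi$ and invoking $\big|e^{\pmb A(x)t}\big|\leqslant Ce^{-kt}$ gives $\|\pmb\varphi(t)\|_\Li\leqslant Ce^{-\omega_2 t}D_0$ for some $\omega_2>0$. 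Denoting by $T(t)$ the linear solution operator and setting $\omega:=\min(\omega_1,\omega_2)>0$, this reads $\|T(t)w_0\|_\Li\leqslant C_0 e^{-\omega t}\|w_0\|_\Li$ for some $C_0\geqslant1$.

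\textbf{Nonlinear bootstrap.} With $w=(\pmb\varphi,\psi)$ and $\delta_0:=\|w_0\|_\Li=\|\pmb\varphi_0\|_\Li+\|\psi_0\|_\Li$, the mild solution obeys the Duhamel identity $w(t)=T(t)w_0+\int_0^t T(t-s)\big(\pmb R_1(w(s)),R_2(w(s))\big)\ds$, understood via \eqref{eq:PointEsPhi} and \eqref{eq:PsiLiEs}. As long as $\|w(s)\|_\Li\leqslant K$, Assumption~\ref{ass:SquareEs} gives $\big\|\big(\pmb R_1(w(s)),R_2(w(s))\big)\big\|_\Li\leqslant C_K\|w(s)\|_\Li^2$, so applying the $\Li$-decay of the previous step to both terms and writing $N(t):=\sup_{0\leqslant s\leqslant t}e^{\omega' s}\|w(s)\|_\Li$ for a fixed $\omega'\in(0,\omega/2)$ yields $N(t)\leqslant C_0\delta_0+\tfrac{C_0C_K}{\omega-2\omega'}N(t)^2$. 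A standard continuity argument --- using that $t\mapsto\|w(t)\|_\Li$ is continuous and $N(0)=\delta_0$ --- then forces $N(t)\leqslant\tfrac32 C_0\delta_0$ for all $t\geqslant0$, provided $\delta_0$ is small enough (first so that $2C_0\delta_0\leqslant K$, then so that $4C_0^2 C_K\delta_0/(\omega-2\omega')<\tfrac12$). Hence $\|w(t)\|_\Li\leqslant 2C_0\delta_0\,e^{-\omega' t}$ for all $t$, which is \eqref{eq:NonDec} with $C=2C_0$ and $k=2\omega'$, the smallness threshold acquiring the stated shape $\|\pmb\varphi_0\|_\Li+\|\psi_0\|_\Li<K/(4C^2)$ once $C$ is enlarged suitably; run on the maximal existence interval, the same bound keeps the $\Li$-norm finite for all times, which secures the global existence mentioned before the statement.

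\textbf{Main obstacle.} The crux is the linear $\Li$-decay. A direct $\Li$-estimate is doomed: the coupling terms $\pmb B\psi$ and $\pmb C\pmb\varphi$ enter any Gronwall inequality with a constant comparable to $\|\pmb B\|_\Li+\|\pmb C\|_\Li$, which need not be smaller than the decay rates $k,\kappa_0$, so a strongly coupled yet linearly stable system would be lost. The stability of the \emph{coupled} problem is recorded only by $s(\L_p)<0$, an $\Lp{}$-statement, and transferring it to $\Li$ forces the use of the $p$-independence of $\sigma(\L_p)$ (to take $p>N/2$), of the ultracontractive smoothing of the reaction-diffusion semigroup, and --- this being why it is imposed as a separate hypothesis --- of $s\big(d(\cdot)\big)<0$: the bound $s(\L_p)<0$ does not by itself control the spectral bound of $\gamma\Delta_\nu+d(\cdot)$ (coupling may stabilise an unstable $\psi$-equation), so without $s\big(d(\cdot)\big)<0$ the free evolution $e^{t(\gamma\Delta_\nu+d)}\psi_0$ need not decay and the scheme above collapses. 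Once the linear $\Li$-decay is in hand, the nonlinear step is routine.
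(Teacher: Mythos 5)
Your argument is correct and rests on exactly the same three pillars as the paper's proof, deployed in the same order: the spectral bound $s(\L_p)<0$ converted to exponential decay of the analytic semigroup on $\Lp{}$, the ultracontractive estimate for $e^{t(\gamma\Delta_\nu+d)}$ together with $s\big(d(\cdot)\big)<0$ to pass from $\Lp{}$ control of $\pmb\varphi$ to $\Li$ control of $\psi$, and the pointwise bound $|e^{t\pmb A(x)}|\leqslant Ce^{-kt}$ (from $s\big(\pmb A(\cdot)\big)<0$, inherited via \eqref{thm:SpectrumOfOperatorL}) to close the loop for $\pmb\varphi$. The only difference from the paper is organizational: you first isolate the statement ``the full linear solution operator decays exponentially in $\Li$'' and then run one clean Duhamel perturbation with the quadratic remainders, whereas the paper never states the linear $\Li$ decay separately --- it carries $\pmb R_1,R_2$ through each of Lemmas \ref{thm:NonlinearStabilityDuhamel}--\ref{thm:NonlinearStabilityPsiOmegaTylda} and closes with the quadratic Gr\"onwall-type Lemma \ref{thm:NonlinearStabilityClassicalEstimate}, which plays the role of your continuity argument on $N(t)=\sup_{s\leqslant t}e^{\omega's}\|w(s)\|_\Li$. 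Your version buys a cleaner modular statement (linear $\Li$-stability as a standalone fact), at the price of having to justify the variation-of-constants identity $w(t)=T(t)w_0+\int_0^t T(t-s)F(s)\ds$ with the \emph{full} operator $T(t)$ on $\Li$, where $\L$ does not generate a $C_0$-semigroup; this is harmless here (the identity holds in $\Lp{}$ since the $\Li$ solution is a fortiori an $\Lp{}$ mild solution, and every term is bounded), but it is precisely the point the paper sidesteps by working only with the componentwise formulas \eqref{eq:PointEsPhi} and \eqref{eq:PsiLiEs}. Your closing discussion of why each hypothesis is needed matches the paper's own sketch following the theorem.
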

\rev{Let us briefly sketch the proof of Theorem \ref{thm:NonlinearStability}. First, we use the assumption~$s(\L_p)<0$ to show the $L^p$-$L^\infty$ estimates for the couple $(\pmb \varphi, \psi)$ in Lemma \ref{thm:NonlinearStabilityDuhamel}. 
Then, using the assumption $s\big(\pmb A(\cdot)\big)<0$ (which is a direct  consequence of the inequality $s(\L_p)<0$, see equation \eqref{thm:SpectrumOfOperatorL}) we show the pointwise estimate for the function $\pmb \varphi$ in Lemma \ref{thm:NonlinearStabilityFirstPointwiseEstimate}. 
In Lemma \ref{thm:NonlinearStabilityPsiByLinAndSquareEstimate} we use the assumption $s\big(d(\cdot)\big)<0$, to obtain the $L^\infty$-$L^p$ estimate for the function $\psi$, which together with $L^p$-$L^{\infty}$ estimates for $\pmb \varphi$ from Lemma \ref{thm:NonlinearStabilityOneToSquareEstimate} provides the desired $L^\infty$-$L^\infty$ estimates in Lemma \ref{thm:NonlinearStabilityPsiOmegaTylda}. 
}

\begin{lemma}
	\label{thm:NonlinearStabilityDuhamel}
	\rev{Assume that $s(\L_p)<0$ and let} $(\pmb \varphi, \psi)$ be a solution \eqref{eq:GitSol} to system \eqref{eq:NonlinearStabilityLinearisedProblemWithRest}-\eqref{eq:IniNonLin}  satisfying
	\begin{nalign}
		\sup_{t\in [0,\,T]} \|\pmb \varphi(t)\|_\Li +\revone{\sup_{t\in [0,\,T]}} \|\psi(t)\|_\Li \leqslant K \quad \text{for some} \ \ T>0.
	\end{nalign}
	For each $p\in (1,\infty)$ and each $k_1\in \big(0, \, -s(\L_p)\big)$ there exists $C=C(K, k_1, p) > 0$ such that
	\begin{nalign}
		\|\pmb \varphi(t) \|_\Lp{} + \|\psi (t)\|_\Lp{} &\leqslant C e^{-k_1 t} \big(\|\pmb \varphi_0 \|_\Li + \|\psi_0 \|_\Li \big) \\ &\rev{+ C \int_0^t e^{-k_1(t-s)} \big(\| \pmb \varphi(s)\|_\Li^2 + \|\psi(s)\|_\Li^2\big) \ds},
	\end{nalign}
	for all $t\in [0, \, T]$.
\end{lemma}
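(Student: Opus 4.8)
The plan is to represent the couple $(\pmb\varphi,\psi)$ by the variation-of-constants formula associated with the operator $\L_p$, treating the remainders $\big(\pmb R_1(\pmb\varphi,\psi),R_2(\pmb\varphi,\psi)\big)$ as an inhomogeneity, and then to exploit the exponential decay of the semigroup generated by $\big(\L_p,D(\L_p)\big)$. Since $\big(\L_p,D(\L_p)\big)$ generates an analytic semigroup on $\Lp{}^{n+1}$ and, by \eqref{thm:SpectrumOfOperatorL}, its spectrum is independent of $p$ with $s(\L_p)<0$, the growth bound of this semigroup coincides with $s(\L_p)$. Hence, for every $k_1\in\big(0,\,-s(\L_p)\big)$ there is a constant $M=M(k_1,p)\geqslant 1$ such that $\big\|e^{t\L_p}\big\|_{\mathcal{L}(\Lp{}^{n+1})}\leqslant M e^{-k_1 t}$ for all $t\geqslant 0$.

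The next step is to obtain the Duhamel identity in $\Lp{}$. Because $\|\pmb\varphi(t)\|_\Li+\|\psi(t)\|_\Li\leqslant K$ on $[0,T]$ and $\Omega$ is bounded, Assumption~\ref{ass:SquareEs} guarantees that $t\mapsto\big(\pmb R_1(\pmb\varphi(t),\psi(t)),R_2(\pmb\varphi(t),\psi(t))\big)$ belongs to $C\big([0,T],\Lp{}^{n+1}\big)$. The mild solution \eqref{eq:GitSol}, defined via the pointwise ODE for $\pmb\varphi$ and the Neumann heat semigroup for $\psi$ (equations \eqref{eq:PointEsPhi} and \eqref{eq:PsiLiEs}), is therefore a mild solution of the linear inhomogeneous Cauchy problem $\partial_t(\pmb\varphi,\psi)=\L_p(\pmb\varphi,\psi)+(\pmb R_1,R_2)$; by uniqueness of such mild solutions one gets, for $t\in[0,T]$,
\[
\begin{pmatrix}\pmb\varphi(t)\\\psi(t)\end{pmatrix}
= e^{t\L_p}\begin{pmatrix}\pmb\varphi_0\\\psi_0\end{pmatrix}
+ \int_0^t e^{(t-s)\L_p}\begin{pmatrix}\pmb R_1(\pmb\varphi(s),\psi(s))\\ R_2(\pmb\varphi(s),\psi(s))\end{pmatrix}\ds .
\]

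It remains to estimate the two terms. For the first one, using $L^\infty(\Omega)\subseteq\Lp{}$ for the bounded domain $\Omega$, we have $\big\|(\pmb\varphi_0,\psi_0)\big\|_{\Lp{}^{n+1}}\leqslant|\Omega|^{1/p}\big(\|\pmb\varphi_0\|_\Li+\|\psi_0\|_\Li\big)$, so the semigroup bound gives a term of the form $Ce^{-k_1 t}\big(\|\pmb\varphi_0\|_\Li+\|\psi_0\|_\Li\big)$. For the integral term, Assumption~\ref{ass:SquareEs} applied pointwise (which is legitimate since $|\pmb\varphi(s,x)|+|\psi(s,x)|\leqslant K$) yields $\big|\pmb R_1(\pmb\varphi(s,x),\psi(s,x))\big|+\big|R_2(\pmb\varphi(s,x),\psi(s,x))\big|\leqslant C_K\big(|\pmb\varphi(s,x)|^2+|\psi(s,x)|^2\big)$, hence
\[
\left\|\begin{pmatrix}\pmb R_1(\pmb\varphi(s),\psi(s))\\ R_2(\pmb\varphi(s),\psi(s))\end{pmatrix}\right\|_{\Lp{}^{n+1}}
\leqslant C_K|\Omega|^{1/p}\big(\|\pmb\varphi(s)\|_\Li^2+\|\psi(s)\|_\Li^2\big).
\]
Applying $\big\|e^{(t-s)\L_p}\big\|\leqslant M e^{-k_1(t-s)}$ under the integral sign and adding the two contributions gives exactly the asserted inequality with $C=C(K,k_1,p)$.

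The main obstacle is the identification carried out in the second step: checking that the mild solution built from the pointwise ODE and the Neumann heat semigroup coincides with the mild solution of the abstract problem governed by $\L_p$, together with the fact that the semigroup growth bound equals $s(\L_p)$ — the latter relying on analyticity of the semigroup and on the $p$-independence of $\sigma(\L_p)$ recorded in \eqref{thm:SpectrumOfOperatorL}. Once these are in place, the remaining estimates are routine Hölder bookkeeping on the bounded domain $\Omega$.
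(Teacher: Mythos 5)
Your proposal is correct and follows essentially the same route as the paper: both write the Duhamel formula for $\L_p$ with the Taylor remainders as inhomogeneity, use that the analytic semigroup $e^{t\L_p}$ decays like $e^{-k_1 t}$ in $\Lp{}^{n+1}$ because $s(\L_p)<0$ (the paper cites Engel--Nagel for exactly the spectral-bound-equals-growth-bound fact you invoke), and then apply $\|w\|_\Lp{}\leqslant |\Omega|^{1/p}\|w\|_\Li$ together with Assumption~\ref{ass:SquareEs}. Your extra care in identifying the componentwise mild solution with the abstract one for $\L_p$ is a point the paper passes over silently, but it does not change the argument.
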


\begin{proof} 
	We begin by the integral formulation of the Cauchy problem \eqref{eq:NonlinearStabilityLinearisedProblemWithRest}-\eqref{eq:IniNonLin} 
	\begin{nalign}
		\label{eq:NonlinearStabilityDuhamelPrinciple}
		\begin{pmatrix}	
			\pmb \varphi(t) \\ \psi(t) 
		\end{pmatrix}
		= e^{t \L_p} \begin{pmatrix}\pmb \varphi_0 \\ \psi_0 \end{pmatrix} + \int_0^t e^{(t-s)\L_p}  	
		\begin{pmatrix}
			\pmb R_1  \big(\pmb \varphi(s), \psi(s) \big) \\ R_2\big(\pmb \varphi(s), \psi(s)\big)
		\end{pmatrix} 
		\ds.
	\end{nalign} 
	Since $s(\L_p) <0$, it is well-known (see \textit{e.g.}~\cite[Ch.~IV,~Cor.~3.12]{MR1721989} that for each $k_1\in\big(0,\, -s(\L_p)\big)$  \rev{there exists a constant} $C = C(k_1)>0$ such that
	\begin{nalign}
		\left\| e^{t\L_p} \begin{pmatrix}	\pmb \varphi_0 \\ \psi_0 \end{pmatrix} \right\|_\Lp{} \leqslant C e^{-k_1t} 	\left\| \begin{pmatrix}	\pmb \varphi_0 \\ \psi_0 \end{pmatrix} \right\|_\Lp{}  \text{ for all } t\geqslant 0. 
	\end{nalign}
	Now, it suffices to compute the $L^p$-norm of equation \eqref{eq:NonlinearStabilityDuhamelPrinciple}, apply the well known inequality $\| w\|_\Lp{} \leqslant |\Omega|^\frac{1}{p}\| w\|_\Li$ and use Assumption \ref{ass:SquareEs} for the remainders. 
\end{proof}

\begin{lemma}
	\label{thm:NonlinearStabilityFirstPointwiseEstimate}
	\rev{Assume that $s(\L_p)<0$ and let} $(\pmb \varphi, \psi)$ be a solution \eqref{eq:GitSol} to system \eqref{eq:NonlinearStabilityLinearisedProblemWithRest}-\eqref{eq:IniNonLin} satisfying
	\begin{nalign}
		\sup_{t\in [0,\,T]} \|\pmb \varphi(t)\|_\Li +\revone{\sup_{t\in [0,\,T]}} \|\psi(t)\|_\Li \leqslant K \quad \text{for some} \ \ T>0.
	\end{nalign} 
	For each $k_2\in \big(0, \, -s(\pmb A(\cdot))\big)$ there exist a number $C = C(K,k_2) >0$ such that
	\begin{nalign}
		|\pmb \varphi(x,t)| &\leqslant C e^{-k_2 t} |\pmb \varphi_0(x)| + C \int_0^t e^{-k_2(t-s)} \big(| \psi(x,s)| + |\psi(x,s)|^2 + |\pmb \varphi (x,s)|^2 \big)\ds,
	\end{nalign}
	for all $t\in [0, \, T]$ and almost all $x\in \Omega$.
\end{lemma}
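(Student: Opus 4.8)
The plan is to exploit the absence of diffusion in the first equation of system~\eqref{eq:NonlinearStabilityLinearisedProblemWithRest}: for almost every fixed $x\in\Omega$ the map $t\mapsto\pmb\varphi(x,t)$ solves the linear-plus-remainder ordinary differential equation with matrix $\pmb A(x)$ (constant in $t$), forcing term $\pmb B(x)\psi(x,t)$, and nonlinearity $\pmb R_1\big(\pmb\varphi(x,t),\psi(x,t)\big)$. Since $(\pmb\varphi,\psi)$ is the mild solution~\eqref{eq:GitSol} of problem~\eqref{eq:NonlinearStabilityLinearisedProblemWithRest}, it satisfies the corresponding pointwise variation-of-constants identity (this is exactly equation~\eqref{eq:PointEsPhi} built into the notion of mild solution used here),
\begin{nalign}
	\pmb\varphi(x,t)=e^{t\pmb A(x)}\pmb\varphi_0(x)+\int_0^t e^{(t-s)\pmb A(x)}\Big(\pmb B(x)\psi(x,s)+\pmb R_1\big(\pmb\varphi(x,s),\psi(x,s)\big)\Big)\ds,
\end{nalign}
valid for almost all $x\in\Omega$ and all $t\in[0,T]$. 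First I would take absolute values and estimate the three terms separately.

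For the first term I would use the matrix-exponential decay: the hypothesis $s(\L_p)<0$ forces $s\big(\pmb A(\cdot)\big)<0$ by the spectral description~\eqref{thm:SpectrumOfOperatorL}, so by~\eqref{RTG}, for each $k_2\in\big(0,-s(\pmb A(\cdot))\big)$ there is a constant with $\big|e^{t\pmb A(x)}\big|\leqslant Ce^{-k_2t}$ for almost all $x\in\Omega$; this bounds the first term by $Ce^{-k_2t}|\pmb\varphi_0(x)|$ and inserts the factor $e^{-k_2(t-s)}$ into the integral. For the linear forcing term I would use $\pmb B\in L^\infty(\Omega)$ from~\eqref{eq:LinearSystemCoefficientMatrixDefinition}--\eqref{abcd}, giving $\big|\pmb B(x)\psi(x,s)\big|\leqslant\|\pmb B\|_\Li\,|\psi(x,s)|$. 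For the remainder I would invoke Assumption~\ref{ass:SquareEs}: the running bound $\sup_{t\in[0,T]}\|\pmb\varphi(t)\|_\Li+\sup_{t\in[0,T]}\|\psi(t)\|_\Li\leqslant K$ gives $|\pmb\varphi(x,s)|+|\psi(x,s)|\leqslant K$ for almost all $x$ and all $s\in[0,T]$, whence $\big|\pmb R_1\big(\pmb\varphi(x,s),\psi(x,s)\big)\big|\leqslant C_K\big(|\pmb\varphi(x,s)|^2+|\psi(x,s)|^2\big)$.

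Collecting the three bounds and absorbing $\|\pmb B\|_\Li$, $C_K$ and the constant from~\eqref{RTG} into a single constant $C=C(K,k_2)$ yields
\begin{nalign}
	|\pmb\varphi(x,t)|\leqslant Ce^{-k_2t}|\pmb\varphi_0(x)|+C\int_0^t e^{-k_2(t-s)}\big(|\psi(x,s)|+|\psi(x,s)|^2+|\pmb\varphi(x,s)|^2\big)\ds,
\end{nalign}
which is the assertion. I do not expect a genuine obstacle here; the only point needing a line of care is the passage to the pointwise ODE identity, i.e.\ verifying that the mild-solution machinery (which handles the $\psi$-equation through the Neumann heat semigroup) is compatible with solving the $\pmb\varphi$-equation as an $x$-parametrised family of ODEs and preserves measurability in $x$ — but this is precisely how the local solution is constructed in Theorem~\ref{thm:ExPar}.
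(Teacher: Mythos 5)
Your argument is exactly the paper's proof: the pointwise variation-of-constants formula you write down is precisely equation~\eqref{eq:PointEsPhi} (which the paper takes as part of the definition of the mild solution~\eqref{eq:GitSol}), and the three estimates — decay of $e^{t\pmb A(x)}$ via~\eqref{thm:SpectrumOfOperatorL} and~\eqref{RTG}, boundedness of $\pmb B$, and Assumption~\ref{ass:SquareEs} for $\pmb R_1$ — are the same ones the paper combines. Correct, no gaps.
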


\begin{proof}
	We write the equation for $\pmb \varphi$ in system \eqref{eq:NonlinearStabilityLinearisedProblemWithRest} in the following way
	\begin{nalign}
		\label{eq:PointEsPhi}
		\pmb \varphi(x,t) = e^{t\pmb A(x)} \pmb \varphi_0(x) + \int_0^t e^{(t-s)\pmb A(x)} \big( \pmb B(x) \psi(x,s) + \pmb R_1(\pmb \varphi(x,s), \psi(x,s)) \big)\ds.
	\end{nalign}
	By equation \eqref{thm:SpectrumOfOperatorL}, the assumption $s(\L_p)<0$ implies $s\big(\pmb A(\cdot) \big)<0$. Therefore, by inequality \eqref{RTG} we have the exponential pointwise decay estimate
	\begin{nalign}
		\label{eq:PointDec}
		\left| e^{t\pmb A(x)} \pmb \varphi_0(x) \right| \leqslant C_1 e^{-k_2 t}|\pmb \varphi_0(x)| \quad \text{for all } t>0 \text{ and almost all } x\in \Omega.
	\end{nalign}
	To complete the proof, it suffices to apply inequality \eqref{eq:PointDec} to equation \eqref{eq:PointEsPhi} and use $\pmb B \in L^\infty(\Omega)^n$ together with Assumption \ref{ass:SquareEs} for the remainder.
\end{proof}

\begin{lemma}
	\label{thm:NonlinearStabilityPsiByLinAndSquareEstimate}
	\rev{Assume that $s\big(d(\cdot)\big)<0$} and let $(\pmb \varphi, \psi)$ be a solution \eqref{eq:GitSol} to system \eqref{eq:NonlinearStabilityLinearisedProblemWithRest}-\eqref{eq:IniNonLin} satisfying
	\begin{nalign}
		\sup_{t\in [0,\,T]} \|\pmb \varphi(t)\|_\Li +\revone{\sup_{t\in [0,\,T]}} \|\psi(t)\|_\Li \leqslant K \quad \text{for some} \ \ T>0.
	\end{nalign}
 	For each $k_3 \in \big(0,\, -s(d(\cdot))\big)$ and each $p \in( N/2, \, \infty)$ \rev{there exists} $C = C(K,k_3, p) > 0$ such that
	\begin{nalign}
		\| \psi(t) \|_\Li &\leqslant Ce^{-k_3 t} \| \psi_0 \|_\Li  \\ & \rev{+ C\int_0^t e^{-(t-s)k_3} \Big(m_p(t-s) \| \pmb \varphi(s)\|_\Lp{} + \|\pmb \varphi(s)\|^2_\Li + \|\psi(s)\|^2_\Li \Big) \ds }
	\end{nalign}
	for all $t\in [0, \, T]$, where $m_p(t) = \max\left(t^{-\frac{N}{2p}},1\right)$.
\end{lemma}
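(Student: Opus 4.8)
The plan is to treat the second equation in \eqref{eq:NonlinearStabilityLinearisedProblemWithRest} as a scalar reaction--diffusion equation in which the dissipative part $\gamma\Delta_\nu + d(\cdot)$ is kept on the left and the coupling term $\pmb C(\cdot)\pmb\varphi$ together with the remainder $R_2$ is treated as a source. Denoting by $S(t)=e^{t(\gamma\Delta_\nu+d(\cdot))}$ the analytic semigroup generated by $\gamma\Delta_\nu$ perturbed by the bounded multiplication operator $d(\cdot)$, the mild formulation of this equation reads
\begin{nalign}
	\psi(t) = S(t)\psi_0 + \int_0^t S(t-s)\Big(\pmb C(\cdot)\pmb\varphi(s) + R_2\big(\pmb\varphi(s),\psi(s)\big)\Big)\ds .
\end{nalign}
Everything then reduces to two decay/smoothing properties of $S(t)$ together with Assumption~\ref{ass:SquareEs}.

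First I would record an $L^\infty$--$L^\infty$ decay estimate. Since $d$ is real valued, formula \eqref{SpecBouChar} gives $d(x)\le s\big(d(\cdot)\big)<0$ for almost every $x\in\Omega$; writing $\kappa_0=-s\big(d(\cdot)\big)>0$, the constant function $t\mapsto \|\phi\|_\Li e^{-\kappa_0 t}$ is a supersolution of $w_t=\gamma\Delta_\nu w+d(x)w$ compatible with the Neumann condition, so the parabolic maximum principle yields $\|S(t)\phi\|_\Li\le e^{-\kappa_0 t}\|\phi\|_\Li$, and in particular $\|S(t)\phi\|_\Li\le Ce^{-k_3 t}\|\phi\|_\Li$ for every $k_3\in(0,\kappa_0)$. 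Second, I would need an $L^p$--$L^\infty$ smoothing estimate with the same exponential rate, $\|S(t)\phi\|_\Li\le C\, m_p(t)\, e^{-k_3 t}\|\phi\|_\Lp{}$ for $p\in(N/2,\infty)$. This I would deduce from the classical ultracontractive bound for the Neumann heat semigroup on the bounded $C^2$-domain $\Omega$, namely $\|e^{t\gamma\Delta_\nu}\phi\|_\Li\le C\max\big(t^{-N/(2p)},1\big)\|\phi\|_\Lp{}$, combined with the fact that $d(\cdot)$ is a bounded perturbation; equivalently, one notes that the numerical range of $\gamma\Delta_\nu+d(\cdot)$ on $L^2(\Omega)$ lies in $\big(-\infty,\, s(d(\cdot))\big]$ (because $\gamma\Delta_\nu$ is self-adjoint and negative semidefinite), so its spectral bound is negative, and then transfers the ultracontractive estimate across the bounded perturbation by a Duhamel/Neumann-series iteration. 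The hypothesis $p>N/2$ enters precisely here, to keep the short-time singularity $t^{-N/(2p)}$ integrable near $s=t$.

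With these two estimates in hand, I would take the $L^\infty$-norm of the Duhamel identity above. The semigroup acting on $\psi_0$ and on $R_2$ is estimated by the $L^\infty$--$L^\infty$ bound, using Assumption~\ref{ass:SquareEs} (legitimate since $\|\pmb\varphi(s)\|_\Li+\|\psi(s)\|_\Li\le K$ on $[0,T]$) to get $\big\|R_2\big(\pmb\varphi(s),\psi(s)\big)\big\|_\Li\le C_K\big(\|\pmb\varphi(s)\|_\Li^2+\|\psi(s)\|_\Li^2\big)$; the semigroup acting on the coupling term is estimated by the $L^p$--$L^\infty$ smoothing bound --- this is the crucial point, since the preceding lemmas control $\pmb\varphi$ only in $L^p$ --- after bounding $\|\pmb C(\cdot)\pmb\varphi(s)\|_\Lp{}\le\|\pmb C\|_\Li\|\pmb\varphi(s)\|_\Lp{}$. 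Collecting the three contributions gives exactly the claimed inequality with a constant $C=C(K,k_3,p)$.

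The step I expect to be the main obstacle is the $L^p$--$L^\infty$ smoothing estimate for $S(t)$: one must check that perturbing the Neumann heat semigroup by the sign-definite, merely essentially bounded multiplier $d(\cdot)$ preserves simultaneously the short-time rate $t^{-N/(2p)}$ and a long-time exponential decay at any rate $k_3<-s\big(d(\cdot)\big)$. The $L^\infty$--$L^\infty$ bound, by contrast, is an elementary comparison argument, and inserting everything into the Duhamel formula is routine.
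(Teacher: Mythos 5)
Your proposal is correct and follows essentially the same route as the paper: the mild (Duhamel) formulation of the $\psi$-equation, an $L^\infty$--$L^\infty$ exponential decay bound for $e^{t(\gamma\Delta_\nu+d)}$ from the sign condition $d(x)\leqslant s(d(\cdot))<0$ via the maximum principle, an $L^p$--$L^\infty$ smoothing bound with rate $m_p(t)e^{-k_3t}$ applied to the coupling term $\pmb C\pmb\varphi$, and Assumption~\ref{ass:SquareEs} for the remainder. The only difference is that the step you flag as the main obstacle is handled more directly in the paper: rather than transferring the ultracontractive estimate across the bounded perturbation by a Neumann-series iteration, one uses the pointwise comparison $\big|e^{t(\gamma\Delta_\nu+d)}\zeta\big|\leqslant e^{-\kappa_0 t}\,e^{t\gamma\Delta_\nu}|\zeta|$ (maximum principle with $\kappa_0=-s(d(\cdot))$), which yields $\|e^{t(\gamma\Delta_\nu+d)}\zeta\|_\Li\leqslant C\,m_p(t)e^{-k_3t}\|\zeta\|_\Lp{}$ in one line.
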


\begin{proof}
	We write the equation for $\psi$ from system \eqref{eq:NonlinearStabilityLinearisedProblemWithRest} in the form
	\begin{nalign}
		\label{eq:PsiLiEs}
		\psi(t) = e^{t(\gamma\Delta_\nu + d)} \psi_0 + \int_0^t e^{(t-s)(\Delta_\nu +d)}\big( \pmb C \pmb\varphi +  R_2(\pmb \varphi, \psi)\big) \ds.
	\end{nalign} 
	Next, we recall the estimate for the Neumann heat semigroup (see \textit{e.g.}~\cite[p.~25]{MR755878}) which holds true for all $\zeta \in \Lp{}$:
	\begin{nalign}
		\left\| e^{t\gamma\Delta_\nu}  \zeta \right\|_\Li \leqslant C m_p(t) \|\zeta\|_\Lp{} \quad \text{for all} \quad t>0 \quad \text{and} \quad p\in [1,\infty].
	\end{nalign} 
	Combining this fact with the maximum principle we conclude that
	\begin{nalign}
		\left\| e^{t(\gamma\Delta_\nu+d)}  \zeta \right\|_\Li \leqslant C m_p(t)e^{-k_3t} \|\zeta\|_\Lp{} \quad \text{for all} \quad t>0 \quad \text{and} \quad p\in [1,\infty]
	\end{nalign}
	 for each $k_3 \in \big(0, \, -s(d(\cdot))\big)$.  
	 \rev{Notice that the function $m_p(t)$ reduces to a constant for $p=\infty$.}
	 Therefore, there exists a constant $C>0$ such that
	\begin{nalign}
		\rev{\|e^{t(\gamma\Delta_\nu +d)}\left( \pmb \varphi + \pmb \varphi^2+ \psi^2 \right)\|_\Li} &\rev{\leqslant\|e^{t(\gamma\Delta_\nu +d)} \pmb \varphi \|_\Li + \|e^{t(\gamma\Delta_\nu +d)}\left(\pmb \varphi^2+ \psi^2 \right)\|_\Li} \\
		&\rev{\leqslant Ce^{-k_3 t}  \Big( m_p(t)\| \pmb \varphi \|_\Lp{} +  \|\pmb \varphi\|_\Li^2+\|\psi\|_\Li^2\Big).}
	\end{nalign}
	We complete this reasoning by calculating the $L^\infty$-norm of equation \eqref{eq:PsiLiEs} and using the assumption $\pmb C \in L^\infty(\Omega)^n$ as well as the Assumption \ref{ass:SquareEs} for the remainder.
\end{proof}

\begin{lemma}
	\label{thm:NonlinearStabilityOneToSquareEstimate}
	\rev{Assume that $s(\L_p)<0$ and $s\big(d(\cdot)\big)<0$.} Let $(\pmb \varphi, \psi)$ be a solution \eqref{eq:GitSol} to system \eqref{eq:NonlinearStabilityLinearisedProblemWithRest}-\eqref{eq:IniNonLin} satisfying
	\begin{nalign}
		\sup_{t\in [0,\,T]} \|\pmb \varphi(t)\|_\Li +\revone{\sup_{t\in [0,\,T]}} \|\psi(t)\|_\Li \leqslant K \quad \text{for some} \ \ T>0.
	\end{nalign}
	Choose the exponents $k_1 \in \big(0, \, -s(\L_p)\big)$ and $k_3 \in \big(0, \, -s(d)\big)$ from Lemmas \ref{thm:NonlinearStabilityDuhamel} and~\ref{thm:NonlinearStabilityPsiByLinAndSquareEstimate} respectively.
	For each $k_4 \in \big(0, \, \min(k_1,k_3)\big)$ and every $p > N/2$ there \revone{exists} $C = C(K,k_4, p) > 0$ such that
	\begin{nalign}
		\int_0^t e^{-k_3(t-s)} m_p(t-s)\|\pmb \varphi(s) \|_\Lp{} \ds &\leqslant C \cdot e^{-k_4 t} 
		\big( \| \pmb \varphi_0 \|_\Li + \| \psi_0 \|_\Li\big) \\ & + C \int_0^t e^{-k_4(t-s)} \big( \| \pmb \varphi(s) \|_\Li^2 + \| \psi(s) \|_\Li^2 \big) \ds
	\end{nalign}
	for all $t\in [0, \, T]$, where $m_p(t) = \max\left(t^{-\frac{N}{2p}},1\right)$.
\end{lemma}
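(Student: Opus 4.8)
The plan is to feed the $L^p$-estimate for $\pmb\varphi$ provided by Lemma~\ref{thm:NonlinearStabilityDuhamel} into the integral on the left-hand side and then reduce the whole statement to one elementary convolution inequality. Writing $I_0 = \|\pmb\varphi_0\|_\Li + \|\psi_0\|_\Li$ and $G(\tau) = \|\pmb\varphi(\tau)\|_\Li^2 + \|\psi(\tau)\|_\Li^2$, Lemma~\ref{thm:NonlinearStabilityDuhamel} gives $\|\pmb\varphi(s)\|_\Lp{} \leqslant C e^{-k_1 s} I_0 + C\int_0^s e^{-k_1(s-\tau)} G(\tau)\,\dtau$. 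Substituting this bound into $\int_0^t e^{-k_3(t-s)} m_p(t-s)\|\pmb\varphi(s)\|_\Lp{}\,\ds$ and using Tonelli's theorem (the integrand is nonnegative) to interchange the $s$- and $\tau$-integrations in the resulting double integral, the left-hand side of the claimed inequality is dominated by
\[
C I_0 \int_0^t e^{-k_3(t-s)} m_p(t-s)\, e^{-k_1 s}\,\ds \;+\; C\int_0^t G(\tau)\left(\int_\tau^t e^{-k_3(t-s)} m_p(t-s)\, e^{-k_1(s-\tau)}\,\ds\right)\dtau .
\]
Everything therefore comes down to proving that, for every $k_4\in\big(0,\min(k_1,k_3)\big)$, there is $C>0$ with
\[
J(u):=\int_0^u e^{-k_3(u-\sigma)} m_p(u-\sigma)\, e^{-k_1\sigma}\,{\rm d}\sigma \;\leqslant\; C e^{-k_4 u}\qquad\text{for all }u\geqslant 0 .
\]
Granting this, the first term above is $\leqslant C I_0\, e^{-k_4 t}$ (put $u=t$), while in the inner integral of the second term the substitution $\sigma=s-\tau$ turns it into $J(t-\tau)$, so the second term is $\leqslant C\int_0^t e^{-k_4(t-\tau)} G(\tau)\,\dtau$; adding the two contributions gives exactly the asserted estimate.

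To prove the bound on $J$ I would factor the exponentials as $e^{-k_3(u-\sigma)} e^{-k_1\sigma} = e^{-k_4 u}\, e^{-(k_3-k_4)(u-\sigma)}\, e^{-(k_1-k_4)\sigma}$, discard the last factor (which is $\leqslant 1$ since $k_4<k_1$), substitute $r=u-\sigma$, and use the crude bound $m_p(r)\leqslant r^{-N/(2p)}+1$ to obtain $J(u)\leqslant e^{-k_4 u}\int_0^\infty e^{-(k_3-k_4)r}\big(r^{-N/(2p)}+1\big)\,{\rm d}r$. The remaining integral is finite: the factor $e^{-(k_3-k_4)r}$ with $k_3-k_4>0$ controls the tail, and the singularity at $r=0$ is integrable precisely because $p>N/2$ forces $N/(2p)<1$ — this is just a Gamma-function computation.

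The only genuinely delicate point, and the step I expect to be the main obstacle, is the bookkeeping of the decay rates in this convolution: one has to verify that absorbing the near-singularity of $m_p$ costs only an arbitrarily small amount of exponential decay. This is exactly why the statement claims a rate $k_4$ strictly below $\min(k_1,k_3)$ rather than the endpoint — the slack $k_3-k_4>0$ (equivalently $k_1-k_4>0$) is what makes the kernel integral converge. Everything else, namely the substitution of Lemma~\ref{thm:NonlinearStabilityDuhamel}, the application of Tonelli's theorem, and the change of variables $\sigma=s-\tau$, is routine.
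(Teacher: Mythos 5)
Your proposal is correct and follows essentially the same route as the paper: substitute the $L^p$-bound from Lemma~\ref{thm:NonlinearStabilityDuhamel}, interchange the order of integration, and reduce everything to an exponential bound on the scalar convolution kernel. The only difference is in that last elementary step: the paper splits the time integral at $t-1$, picks up a factor $(t+1)$, and then absorbs it by shrinking the exponent to $k_4$, whereas you extract $e^{-k_4 u}$ up front and reduce to a single convergent Gamma-type integral (using $N/(2p)<1$ and $k_4<\min(k_1,k_3)$), which is a slightly cleaner execution of the same idea.
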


\begin{proof}
	We apply the estimate of $\| \pmb \varphi(t) \|_\Lp{}$ from Lemma \ref{thm:NonlinearStabilityDuhamel} to obtain
	\begin{nalign}
	\int_0^t e^{-k_3(t-s)} m_p(t-s)\|\pmb \varphi(s) \|_\Lp{p} ds \leqslant I_1 + I_2,
	\end{nalign}
	where
	\begin{nalign}  
	I_1 &= C\big(\|\pmb \varphi_0 \|_\Li + \|\psi_0 \|_\Li \big)\int_0^t e^{-k_3(t-s)} m_p(t-s) e^{-k_1 s} \ds, \\ 
	I_2 &=  C \int_0^t e^{-k_3(t-s)} m_p(t-s)\int_0^s e^{-k_1(s-\tau)} \big(\| \pmb \varphi(\tau)\|_\Li^2 + \|\psi(\tau)\|_\Li^2\big)\dtau \ds.
	\end{nalign}
	\rev{Since $m_p(t-s)$ is integrable for $s\in [0,\, t]$ and $p>\frac{N}{2}$,} we estimate the integral in $I_1$ by 
	\begin{nalign}
		\label{I1Es}
		 & e^{-k_3t}\int_0^{t-1} m_p(t-s) e^{(k_3-k_1) s} \ds + C e^{-k_3t}\int_{t-1}^t m_p(t-s) e^{(k_3-k_1) s} \ds  \\  
		&\leqslant  \rev{C e^{-k_3t}\int_0^{t-1}  \max\left(1,e^{(k_3-k_1)s}\right) \ds + Ce^{-k_3t}\int_{t-1}^t m_p(t-s) e^{(k_3-k_1) s} \ds} \\ 
		&\leqslant \rev{C \left(e^{-k_1 t} +  e^{-k_3 t} \right) \int_0^{t-1}  \ds +  C\left(e^{-k_1 t} +  e^{-k_3 t} \right)} \\
		& \leqslant  C(t+1)\left(e^{-k_1 t} +  e^{-k_3 t} \right).
	\end{nalign}
	Therefore, for arbitrary $k_4 \in \big( 0, \max(k_1,k_3)\big)$ there exists $C>0$ such that 
	\begin{nalign}
		I_1 \leqslant  Ce^{-k_4 t} 
		\big( \| \pmb \varphi_0 \|_\Li + \| \psi_0 \|_\Li\big).
	\end{nalign}
	Denoting  $h(s) = \| \pmb \varphi(s)\|_\Li^2 + \|\psi(s)\|_\Li^2$ the integral $I_2$ takes the form
	\begin{nalign}
		I_2 =&C\int_0^t e^{-k_3(t-s)} m_p(t-s)\int_0^s e^{-k_1(s-\tau)} \revone{h(\tau)}\dtau \ds \\ =&\rev{C\int_0^t \revone{h(\tau)}e^{k_1 \tau} e^{-k_3 t}\int_\tau^t e^{(k_3-k_1)s}m_p(t-s)\ds \dtau}.
	\end{nalign} 
	Next, choosing arbitrary $k_4 \in \big( 0, \max(k_1,k_3)\big)$ and following the calculations from~\eqref{I1Es} we obtain the estimates
	\begin{nalign}
		e^{k_1 \tau} e^{-k_3 t}\int_\tau^t e^{(k_3-k_1)s}m_p(t-s)\ds &\leqslant e^{k_1 \tau} e^{-k_3 t} C(1+t-\tau) \left( e^{(k_3 -k_1)t} +  e^{(k_3 -k_1)\tau} \right) \\ 
		&\leqslant	C(1+t-\tau)\left( e^{-k_1(t-\tau)} + e^{-k_3(t-\tau)} \right) \\ & \leqslant Ce^{-k_4(t-\tau)},
	\end{nalign}	
	which imply $I_2 \leqslant 	C\int_0^t 	h(\tau) e^{-k_4(t-\tau)}\dtau$.
\end{proof}
	
\begin{lemma}
\label{thm:NonlinearStabilityPsiOmegaTylda}
	\rev{Assume that $s(\L_p)<0$ and $s\big(d(\cdot)\big)<0$.} Let $(\pmb \varphi, \psi)$ be a solution \eqref{eq:GitSol} to system \eqref{eq:NonlinearStabilityLinearisedProblemWithRest}-\eqref{eq:IniNonLin} satisfying
	\begin{nalign}
		\sup_{t\in [0,\,T]} \|\pmb \varphi(t)\|_\Li + \revone{\sup_{t\in [0,\,T]}}\|\psi(t)\|_\Li \leqslant K \quad \text{for some} \ \ T>0.
	\end{nalign}
	For each $k \in \big(0, \, \min(k_1,k_2, k_3)\big)$ there \revone{exists} $C = C(K,k_4) > 0$ such that
	\begin{nalign}
		\label{eq:NonlinearStabilityPhiPsiLInfinityEstimate}
		\|\pmb \varphi(t) \|_\Li +  \| \psi(t) \|_\Li &\leqslant C e^{-k t}\big( \|\pmb \varphi_0 \|_\Li +  \| \psi_0 \|_\Li \big) \\ &+C \int_0^t e^{-k (t - s)}\left(\|\pmb \varphi(s) \|^2_\Li +  \| \psi(s) \|^2_{L^\infty({\Omega})}\right)\ds
	\end{nalign}
	for all $t\in [0, \, T]$.	
\end{lemma}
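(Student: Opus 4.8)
The plan is to close the loop among Lemmas~\ref{thm:NonlinearStabilityDuhamel}--\ref{thm:NonlinearStabilityOneToSquareEstimate} by bootstrapping over the two components $\pmb\varphi$ and $\psi$, working throughout on the interval $[0,T]$ under the standing a~priori bound $\sup_{[0,T]}(\|\pmb\varphi\|_\Li+\|\psi\|_\Li)\leqslant K$. First I would take the essential supremum over $x\in\Omega$ in the pointwise inequality of Lemma~\ref{thm:NonlinearStabilityFirstPointwiseEstimate} (using that $\esssup_x$ commutes past the time integral), which yields
\begin{nalign}
	\|\pmb\varphi(t)\|_\Li &\leqslant C e^{-k_2 t}\|\pmb\varphi_0\|_\Li \\ &+ C\int_0^t e^{-k_2(t-s)}\big(\|\psi(s)\|_\Li + \|\psi(s)\|_\Li^2 + \|\pmb\varphi(s)\|_\Li^2\big)\ds .
\end{nalign}
Every term on the right is already of the required ``exponential decay plus quadratic feedback'' shape, except the linear term $\int_0^t e^{-k_2(t-s)}\|\psi(s)\|_\Li\ds$; so the whole argument reduces to replacing this single term by one with quadratic feedback, which forces us first to control $\|\psi(s)\|_\Li$ itself.

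To that end I would split the right-hand side of Lemma~\ref{thm:NonlinearStabilityPsiByLinAndSquareEstimate} into the piece $\int_0^t e^{-k_3(t-s)}m_p(t-s)\|\pmb\varphi(s)\|_\Lp{}\ds$ --- to which Lemma~\ref{thm:NonlinearStabilityOneToSquareEstimate} applies verbatim --- and the remaining purely quadratic piece, and then use $e^{-k_3(t-s)}\leqslant e^{-k_4(t-s)}$ and $e^{-k_3 t}\leqslant e^{-k_4 t}$ with $k_4\in\big(0,\min(k_1,k_3)\big)$, to obtain
\begin{nalign}
	\|\psi(t)\|_\Li &\leqslant C e^{-k_4 t}\big(\|\pmb\varphi_0\|_\Li+\|\psi_0\|_\Li\big) \\ &+ C\int_0^t e^{-k_4(t-s)}\big(\|\pmb\varphi(s)\|_\Li^2+\|\psi(s)\|_\Li^2\big)\ds ,
\end{nalign}
which already has the desired form for the $\psi$-component.

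Next I would substitute this bound into the linear term of the $\pmb\varphi$-inequality. The part proportional to $e^{-k_4 s}$, integrated against $e^{-k_2(t-s)}$, produces a factor $C(1+t)e^{-\min(k_2,k_4)t}\leqslant Ce^{-kt}$ for any $k<\min(k_2,k_4)$; the remaining double integral is handled by Fubini, rewriting $\int_0^t e^{-k_2(t-s)}\int_0^s e^{-k_4(s-\tau)}h(\tau)\dtau\ds=\int_0^t h(\tau)\big(\int_\tau^t e^{-k_2(t-s)}e^{-k_4(s-\tau)}\ds\big)\dtau$ with $h(\tau)=\|\pmb\varphi(\tau)\|_\Li^2+\|\psi(\tau)\|_\Li^2$, and then bounding the inner integral by $C(1+t-\tau)e^{-\min(k_2,k_4)(t-\tau)}\leqslant Ce^{-k(t-\tau)}$ --- the same elementary convolution computation already carried out around \eqref{I1Es}. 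Combining this with the already-quadratic terms gives the estimate~\eqref{eq:NonlinearStabilityPhiPsiLInfinityEstimate} for $\|\pmb\varphi(t)\|_\Li$ with exponent any $k<\min(k_1,k_2,k_3)$ (given such a $k$, choose $k_4\in(k,\min(k_1,k_3))$, which is possible since $k<\min(k_1,k_3)$, and then $k<\min(k_2,k_4)$); adding the two component estimates, using $e^{-k_4 t}\leqslant e^{-kt}$ and $e^{-k_4(t-s)}\leqslant e^{-k(t-s)}$ in the $\psi$-bound, completes the proof.

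The argument involves no genuine analytic obstacle: all the hard estimates --- the $\Lp{}$-semigroup decay from $s(\L_p)<0$, the pointwise ODE decay governed by $s(\pmb A(\cdot))<0$ (itself a consequence of $s(\L_p)<0$ via \eqref{thm:SpectrumOfOperatorL}), and the Neumann-heat-semigroup smoothing combined with the maximum principle and $s(d(\cdot))<0$ --- are already in place in the preceding lemmas. The only point requiring care is the bookkeeping of decay rates: each convolution of two exponentials of possibly equal rate costs a polynomial factor $(1+t)$ or $(1+t-\tau)$, and these must be reabsorbed by fixing the final rate $k$ strictly below $\min(k_1,k_2,k_3)$, exactly in the spirit of Lemma~\ref{thm:NonlinearStabilityOneToSquareEstimate}.
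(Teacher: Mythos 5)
Your proposal is correct and follows essentially the same route as the paper's proof: apply Lemma~\ref{thm:NonlinearStabilityOneToSquareEstimate} to the linear $\|\pmb\varphi\|_\Lp{}$ term in Lemma~\ref{thm:NonlinearStabilityPsiByLinAndSquareEstimate} to close the $\psi$-estimate, take the $L^\infty$-norm of Lemma~\ref{thm:NonlinearStabilityFirstPointwiseEstimate}, and feed the $\psi$-bound into the remaining linear term via Fubini and the elementary exponential-convolution estimate, sacrificing a polynomial factor absorbed by taking $k$ strictly below $\min(k_1,k_2,k_3)$. The only difference is the (immaterial) order in which the two component estimates are derived.
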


\begin{proof} 
	Applying Lemma \ref{thm:NonlinearStabilityOneToSquareEstimate} with $p>N/2$ and with arbitrary $k_4 \in \big(0, \, \min(k_1,k_3)\big)$ to the linear term containing $\| \pmb \varphi\|_\Lp{p}$ on the right-hand side of the inequality in Lemma~\ref{thm:NonlinearStabilityPsiByLinAndSquareEstimate} \rev{yields}
	\begin{nalign}
		\label{PsiIneqLi}
		\| \psi(t) \|_\Li  &\leqslant C e^{-k_4t} \big(\|\psi_0\|_\Li + \|\pmb \varphi_0 \|_\Li\big) \\ &+ C \int_0^t e^{-k_4(t-s)} \big( \| \pmb \varphi (s)\|^2_\Li + \|  \psi (s)\|^2_\Li \big) \ds. 
	\end{nalign} 
	Next, we compute the $L^\infty$-norm of the inequality provided by Lemma \ref{thm:NonlinearStabilityFirstPointwiseEstimate} for a constant $k_5\in\big(0, \, \min(k_2,k_4)\big)$ and obtain
	\begin{nalign}
		\label{eq:NonlinearStabilityPhiLInfinityK5Estimate}
				\|\pmb \varphi(t)\|_\Li &\leqslant C e^{-k_5 t} \|\pmb \varphi_0\|_\Li \\&+ C \int_0^t e^{-k_5(t-s)} \big(\| \psi(s)\|_\Li + \|\psi(s)\|_\Li^2 + \|\pmb \varphi (s)\|_\Li^2 \big)\ds.
	\end{nalign}
	Now, it suffices to  estimate the linear term in inequality~\eqref{eq:NonlinearStabilityPhiLInfinityK5Estimate} using inequality \eqref{PsiIneqLi}. Therefore after changing \rev{the order of integration} we obtain
	\begin{nalign}
		\int_0^t e^{-k_5(t-s)} & \| \psi(s)\|_\Li \ds 	\leqslant Cte^{-k_5t} \revone{\big(\|\pmb \varphi_0 \|_\Li +\| \psi_0\|_\Li)} \\ &+\rev{C \int_0^t \revone{e^{-k_5(t-s)}} \int_0^s \revone{e^{-k_4(s-\tau)}}\big(\|\pmb \varphi(\tau) \|^2_{L^\infty(\Omega)} +  \| \psi(\tau) \|^2_\Li\big)\dtau \ds} \\
		&\leqslant Cte^{-k_5t} \| \psi_0\|_\Li + C \int_0^t e^{-k_5(t-\tau)}\big(\|\pmb \varphi(\tau) \|^2_\Li +  \| \psi(\tau) \|^2_\Li\big)\dtau.
	\end{nalign}
	Thus, for arbitrary number $k \in \big(0, \, \min(k_4, k_5)\big)$, there exists a constant $C>0$ such that
	\begin{nalign}
	\label{eq:NonlinearStabilityK5PsiEstimate}
		\int_0^t e^{-k_5(t-s)} \| \psi(s)\|_\Li \ds &\leqslant Ce^{-kt} \revone{\big(\|\pmb \varphi_0 \|_\Li +\| \psi_0\|_\Li)}  \\ &+C \int_0^t e^{-k(t-\tau)}\big(\|\pmb \varphi(\tau) \|^2_\Li +  \| \psi(\tau) \|^2_\Li\big)\dtau.
	\end{nalign} 
	By applying inequality \eqref{eq:NonlinearStabilityK5PsiEstimate} to inequalities \eqref{eq:NonlinearStabilityPhiLInfinityK5Estimate} and \eqref{PsiIneqLi} we complete the proof of estimate \eqref{eq:NonlinearStabilityPhiPsiLInfinityEstimate}.
\end{proof}

Next, let us recall the well-known fact from stability theory which we prove for completeness of the exposition.
\begin{lemma}
	\label{thm:NonlinearStabilityClassicalEstimate}
	Let $h = h(t)$ be a nonnegative continuous function on $\R_+$ satisfying
	\begin{nalign}
		\label{eq:HIneq}
		h(t) \leqslant  Ce^{-kt} h(0) +  C \int_0^t e^{-k(t-s)}h^2(s)\ds \quad \text{for each} \quad t>0
	\end{nalign}	
	and some constant $C\geqslant 1$. If $h(0)\leqslant k/(4C^2)$, then $h(t)\leqslant  Ch(0) e^{-\frac{k}{2}t}$ for all $t>0$.  
\end{lemma}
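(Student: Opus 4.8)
The plan is to run a standard continuity (bootstrap) argument, the same scheme by which linear exponential stability is upgraded to nonlinear stability. I would first dispose of the degenerate case $h(0)=0$, where the claimed bound reads $h\equiv0$: for small $t$, inequality \eqref{eq:HIneq} gives $h(t)\le C\int_0^t h^2(s)\ds$, so $m(T):=\sup_{[0,T]}h$ satisfies $m(T)\le CT\,m(T)^2$; since $m(T)\to h(0)=0$ as $T\to0^+$ by continuity, this forces $m(T)=0$ for small $T$, and a connectedness argument propagates $h\equiv0$ to all of $\R_+$. From now on assume $h(0)>0$.

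I would then introduce the bootstrap set
\[
\mathcal I:=\bigl\{T\ge0:\ h(t)\le2Ch(0)e^{-kt/2}\ \text{for all}\ t\in[0,T]\bigr\}.
\]
As $h$ is continuous and, using $C\ge1$ and $h(0)>0$, one has $h(0)<2Ch(0)$, the set $\mathcal I$ is a nonempty closed subinterval of $\R_+$, say $\mathcal I=[0,T^\ast]$ with $T^\ast>0$, possibly $T^\ast=\infty$. Arguing by contradiction, suppose $T^\ast<\infty$; then continuity and maximality force the equality $h(T^\ast)=2Ch(0)e^{-kT^\ast/2}$, while $h(s)^2\le4C^2h(0)^2e^{-ks}$ for all $s\in[0,T^\ast]$.

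The crux is to feed this quadratic bound into \eqref{eq:HIneq} at $t=T^\ast$. Using $\int_0^{T^\ast}e^{-k(T^\ast-s)}e^{-ks}\ds=T^\ast e^{-kT^\ast}$, this yields
\[
h(T^\ast)\le Ce^{-kT^\ast}h(0)+4C^3h(0)^2\,T^\ast e^{-kT^\ast}.
\]
For the first term I would use $e^{-kT^\ast}<e^{-kT^\ast/2}$, bounding it by $\tfrac12\bigl(2Ch(0)e^{-kT^\ast/2}\bigr)$. For the second I would invoke the smallness hypothesis $h(0)\le k/(4C^2)$ to get $4C^3h(0)^2\,T^\ast e^{-kT^\ast}\le Ch(0)e^{-kT^\ast/2}\cdot\bigl(kT^\ast e^{-kT^\ast/2}\bigr)$, and then use $\sup_{x\ge0}xe^{-x/2}=2/e<1$ to bound this strictly by $Ch(0)e^{-kT^\ast/2}=\tfrac12\bigl(2Ch(0)e^{-kT^\ast/2}\bigr)$. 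Adding, $h(T^\ast)<2Ch(0)e^{-kT^\ast/2}$, contradicting the equality above; hence $T^\ast=\infty$, i.e.\ $h(t)\le2Ch(0)e^{-kt/2}$ for all $t>0$, which is the asserted estimate once the constant $2C$ is renamed $C$.

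I expect the only real obstacle to be the resonant convolution $\int_0^t e^{-k(t-s)}e^{-ks}\ds=te^{-kt}$: the two exponentials on the right of \eqref{eq:HIneq} carry the same rate $k$, so the convolution picks up an extra linear factor $t$ that a priori ruins exponential decay. It is neutralised exactly by giving up half of the rate (writing $e^{-kt}=e^{-kt/2}e^{-kt/2}$, so that $te^{-kt/2}$ stays bounded) and by using the smallness of $h(0)$ to keep the resulting constant strictly below the threshold $2$ permitted by the bootstrap ansatz.
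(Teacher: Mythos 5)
Your continuity argument is sound and genuinely different from the paper's. The paper multiplies \eqref{eq:HIneq} by $e^{kt}$, uses the running bound $h(s)<k/(2C)$ to absorb the quadratic term \emph{linearly} via $Ch^2(s)\leqslant \tfrac{k}{2}h(s)$, and then applies Gr\"onwall's inequality to $y(t)=h(t)e^{kt}$; this yields $h(t)\leqslant Ch(0)e^{-kt/2}$ with exactly the constant $C$ of the hypothesis, after which the interval is extended by showing $h$ never reaches $k/(4C)$. You instead keep the quadratic term quadratic, compute the resonant convolution $\int_0^t e^{-k(t-s)}e^{-ks}\,{\rm d}s=te^{-kt}$ explicitly, and neutralise the factor $t$ by sacrificing half the decay rate — a correct and self-contained route, and your handling of the case $h(0)=0$ and of the strictness at $T^\ast$ is fine.

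The one genuine defect is the final constant. Your bootstrap ansatz delivers $h(t)\leqslant 2Ch(0)e^{-kt/2}$, and the closing remark that this ``is the asserted estimate once the constant $2C$ is renamed $C$'' is not legitimate: the $C$ in the conclusion is the \emph{same} $C$ that appears in hypothesis \eqref{eq:HIneq} and in the threshold $h(0)\leqslant k/(4C^2)$, so it cannot be renamed independently. Moreover the loss is intrinsic to your method: with the ansatz $h(t)\leqslant ACh(0)e^{-kt/2}$ your convolution estimate requires $1+A^2/(2e)\leqslant A$, which fails for $A=1$ (it holds only for $A$ roughly between $1.32$ and $4.12$), so this route cannot recover the stated constant, whereas the Gr\"onwall route does. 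In practice this is harmless — in the proof of Theorem \ref{thm:NonlinearStability} the lemma is only used to get $h(t)\leqslant 2C h(0)e^{-kt/2}\leqslant K/(2C)<K$, which still produces the required contradiction — but as a proof of the lemma \emph{as stated} your argument establishes a slightly weaker inequality, and you should either restate the conclusion with an absolute multiple of $C$ or switch to the Gr\"onwall absorption to get the sharp constant.
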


\begin{proof}
	Let $T>0$ be such that $h(t) < k/(2C)$ for every $t\in [0,\, T]$. After multiplying both sides of inequality \eqref{eq:HIneq} by $e^{kt}$ we obtain
	\begin{nalign}
		h(t)e^{kt} \leqslant  C h(0) +  \frac{k}{2} \int_0^t e^{ks}h(s)\ds \quad \text{for each} \quad t\in[0,\, T]
	\end{nalign}  
	and using the Gr\"onwall inequality we conclude that $h(t) \leqslant  C h(0) e^{-\frac{k}{2}t}$ for all $t\in[0,\, T]$. 
	
	In order to show that this inequality holds true for all $t>0$ we show that $h(t) < k/(4C)$. Indeed, \rev{if there \revone{exists} }$t_0\in[0, \, T]$ such that $h(t_0) = k/(4C)$ then
	\begin{nalign}
		  \frac{k}{4C} = h(t_0)  \leqslant C \frac{k}{4C^2} e^{-\frac{k}{2}t_0}< \frac{k}{4C},
	\end{nalign}
	which is a contradiction.
\end{proof}

\begin{proof}[Proof of Theorem \ref{thm:NonlinearStability}]
	Let $(\pmb \varphi, \psi)$ be a solution \eqref{eq:HIneq} to problem \eqref{eq:NonlinearStabilityLinearisedProblemWithRest} with an initial condition $(\pmb \varphi_0, \psi_0) \in \Li^n \times C(\oo)$.
	Fix a number $K>0$ from Assumption \ref{ass:SquareEs} and assume that the initial condition satisfies 
	\begin{nalign}
		\|\pmb \varphi_0\|_\Li + \|\psi_0\|_\Li <\frac{K}{4C^2}.
	\end{nalign} 
	By continuity in \eqref{eq:GitSol}, we may choose maximal $T>0$ such that
	\begin{nalign}
		\sup_{t\in [0,\, T]}\|\pmb \varphi(t)\|_\Li +\sup_{t\in [0,\, T]} \|\psi(t)\|_\Li \leqslant K.
	\end{nalign}
	If $T<\infty$  then $\|\pmb \varphi(T)\|_\Li + \|\psi(T)\|_\Li = K$. On the other hand by Lemma \ref{thm:NonlinearStabilityPsiOmegaTylda} and Lemma \ref{thm:NonlinearStabilityClassicalEstimate}, we obtain
	\begin{nalign}
		\label{eq:PhiPsiDecay}
		\|\pmb \varphi(t)\|_\Li + \|\psi(t)\|_\Li \leqslant C \big(\|\pmb \varphi_0\|_\Li + \|\psi_0\|_\Li\big) e^{-\frac{k}{2} t} \leqslant \frac{K}{4C}
	\end{nalign}
	for $t\in [0, \, T]$ which is a contradiction for $t=T$ because $C\geqslant1$. Hence the inequality~\eqref{eq:PhiPsiDecay} holds true for all $t>0$.  
\end{proof}

\section{Stability of discontinuous stationary solutions} 
\label{sec:Application}

\subsection{Existence of local-in-time solutions} 

In this work, we consider mild solutions of problem \eqref{eq1}-\eqref{nonlinearity}, that is, a solution to the corresponding system of integral equations
\begin{nalign}
	\label{eq:InitialIntegralSystem}
	\pmb u &= \pmb u_0 + \int_0^t \pmb f\big(\pmb u(s), v(s)\big) \ds, \\  
	v & = \rev{e^{\gamma\Delta_\nu t} v_0 + \int_0^t  e^{\gamma\Delta_\nu (t-s)} g\big(\pmb u(s), v(s)\big) \ds.}
\end{nalign} 

\begin{theorem}
	\label{thm:ExPar}
	For every $\pmb u_0 \in \Li^n$ and $v_0 \in C(\oo)$ there exists $T>0$ such that system 
	\eqref{eq:InitialIntegralSystem} has a unique solution $\pmb u \in C\big([0,\,T],\, \Li^n \big)$ and $v\in C\big([0,\,T], \, C(\oo)\big)$.
\end{theorem}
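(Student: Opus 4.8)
The plan is to realise the local-in-time solution as a fixed point of the integral operator naturally associated with system~\eqref{eq:InitialIntegralSystem}, i.e.\ to run a Picard--Banach contraction argument in the space
\begin{nalign}
	X_T = C\big([0,T],\,\Li^n\big)\times C\big([0,T],\,C(\oo)\big),
\end{nalign}
endowed with the norm $\|(\pmb u,v)\|_{X_T}=\sup_{0\le t\le T}\|\pmb u(t)\|_\Li+\sup_{0\le t\le T}\|v(t)\|_{C(\oo)}$, which makes $X_T$ a Banach space. On $X_T$ I define $\Phi=(\Phi_1,\Phi_2)$ by
\begin{nalign}
	\Phi_1(\pmb u,v)(t)&=\pmb u_0+\int_0^t \pmb f\big(\pmb u(s),v(s)\big)\ds, \\
	\Phi_2(\pmb u,v)(t)&=e^{\gamma\Delta_\nu t}v_0+\int_0^t e^{\gamma\Delta_\nu(t-s)}g\big(\pmb u(s),v(s)\big)\ds,
\end{nalign}
so that solutions of~\eqref{eq:InitialIntegralSystem} on $[0,T]$ are exactly the fixed points of $\Phi$. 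First I would fix a radius, say $R=1+2\big(\|\pmb u_0\|_\Li+\|v_0\|_{C(\oo)}\big)$, regard $(\pmb u_0,v_0)$ as a constant-in-time element of $X_T$, and work on the closed ball $B_R=\{(\pmb u,v)\in X_T:\ \|(\pmb u,v)-(\pmb u_0,v_0)\|_{X_T}\le R\}$. Writing $\eta(T)=\sup_{0\le t\le T}\|e^{\gamma\Delta_\nu t}v_0-v_0\|_{C(\oo)}$, the strong continuity of the Neumann heat semigroup on $C(\oo)$ gives $\eta(T)\to 0$ as $T\to 0^+$, which is what makes the ball useful for small $T$.

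Two ingredients enter. \emph{(i) The nonlinearities.} Since $\pmb f$ and $g$ are $C^2$ on $\R^{n+1}$, they are bounded and Lipschitz on the closed ball $\{|\pmb y|\le R'\}$ of $\R^{n+1}$, where $R'$ is any upper bound for $|(\pmb u(x,t),v(x,t))|$ valid over $(\pmb u,v)\in B_R$; denote the resulting bound by $M=M(R)$ and the Lipschitz constant by $L=L(R)$. Because $\pmb f,g$ are in particular continuous, $x\mapsto \pmb f\big(\pmb u(x,t),v(x,t)\big)$ and $x\mapsto g\big(\pmb u(x,t),v(x,t)\big)$ are measurable and bounded by $M$, hence lie in $\Li$, and the Nemytskii maps $(\pmb u,v)\mapsto \pmb f(\pmb u(\cdot),v(\cdot))$, $(\pmb u,v)\mapsto g(\pmb u(\cdot),v(\cdot))$ are Lipschitz with constant $L$ from $B_R$ into $C\big([0,T],\Li^n\big)$ and $C\big([0,T],\Li\big)$ respectively. \emph{(ii) The semigroup.} The Neumann heat semigroup $\{e^{\gamma\Delta_\nu t}\}_{t\ge0}$ satisfies $\|e^{\gamma\Delta_\nu t}\zeta\|_{\Li}\le\|\zeta\|_{\Li}$ by the maximum principle, it is strongly continuous on $C(\oo)$, and for $t>0$ it maps $\Li$ into $C(\oo)$ (parabolic smoothing; the same mechanism underlying the estimate used in Lemma~\ref{thm:NonlinearStabilityPsiByLinAndSquareEstimate}). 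From these facts, whenever $\zeta\in C\big([0,T],\Li\big)$ the Duhamel term $t\mapsto\int_0^t e^{\gamma\Delta_\nu(t-s)}\zeta(s)\ds$ is a well-defined $C(\oo)$-valued Bochner integral, depends continuously on $t$, and has $C(\oo)$-norm at most $\int_0^t\|\zeta(s)\|_\Li\ds$; moreover $t\mapsto e^{\gamma\Delta_\nu t}v_0\in C\big([0,\infty),C(\oo)\big)$. Hence $\Phi$ maps $B_R$ into $X_T$.

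With these in hand the contraction estimates are routine. For $(\pmb u,v)\in B_R$ and $t\in[0,T]$,
\begin{nalign}
	\|\Phi_1(\pmb u,v)(t)-\pmb u_0\|_\Li\le \int_0^t\|\pmb f(\pmb u(s),v(s))\|_\Li\ds\le MT
\end{nalign}
and
\begin{nalign}
	\|\Phi_2(\pmb u,v)(t)-v_0\|_{C(\oo)}&\le \|e^{\gamma\Delta_\nu t}v_0-v_0\|_{C(\oo)}+\int_0^t\|g(\pmb u(s),v(s))\|_\Li\ds \\
	&\le \eta(T)+MT,
\end{nalign}
while for $(\pmb u_1,v_1),(\pmb u_2,v_2)\in B_R$, using the Lipschitz bounds and the sup-norm contraction of $e^{\gamma\Delta_\nu t}$,
\begin{nalign}
	\|\Phi(\pmb u_1,v_1)-\Phi(\pmb u_2,v_2)\|_{X_T}\le 2LT\,\|(\pmb u_1,v_1)-(\pmb u_2,v_2)\|_{X_T}.
\end{nalign}
Choosing $T>0$ small enough that $MT\le R$, $\eta(T)+MT\le R$ and $2LT<\tfrac12$, the map $\Phi$ sends $B_R$ into itself and is a strict contraction there; the Banach fixed point theorem provides a unique fixed point $(\pmb u,v)\in B_R$, which is the desired local solution with $\pmb u\in C([0,T],\Li^n)$ and $v\in C([0,T],C(\oo))$. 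Uniqueness in all of $X_T$ (not merely in $B_R$) then follows by subtracting the integral equations for two solutions on a possibly shorter interval, estimating with the local Lipschitz constants, and applying Gr\"onwall's inequality.

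The step I expect to be the only genuinely delicate one is the regularity of the Duhamel term in the $v$-equation. Since the $\pmb u$-equation is a pure ODE it provides no spatial smoothing, so $\pmb u(\cdot,t)$ is merely an $\Li^n$-function and therefore $g\big(\pmb u(\cdot,t),v(\cdot,t)\big)$ is only in $\Li$, not in $C(\oo)$; nevertheless the output $v$ must lie in $C(\oo)$. This is exactly where the combination of the maximum-principle bound $\|e^{\gamma\Delta_\nu t}\zeta\|_{C(\oo)}\le\|\zeta\|_{\Li}$ and the instantaneous smoothing $e^{\gamma\Delta_\nu t}\colon\Li\to C(\oo)$ for $t>0$ is needed, in order to guarantee that $\int_0^t e^{\gamma\Delta_\nu(t-s)}g(\pmb u(s),v(s))\ds$ is continuous in $t$ with values in $C(\oo)$; everything else is the classical Picard iteration.
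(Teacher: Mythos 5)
Your proposal is correct and follows essentially the same route as the paper: a Banach fixed point argument in $C\big([0,T],\Li^n\big)\times C\big([0,T],C(\oo)\big)$ using the local Lipschitz property of the $C^2$ nonlinearities, the sup-norm contractivity of the Neumann semigroup, and the smoothing $e^{\gamma\Delta_\nu t}\colon\Li\to C(\oo)$ for $t>0$ to justify that the Duhamel term is a continuous $C(\oo)$-valued function even though $g\big(\pmb u(\cdot,s),v(\cdot,s)\big)$ is only in $\Li$. The paper makes this last point precise by truncating the integral at $t-\varepsilon$ and passing to the limit uniformly, which is exactly the mechanism you identify as the one delicate step.
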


\begin{proof}
	First, we recall that the operator $\big(\Delta_\nu, D(\Delta_\nu)\big)$ with the domain
	\begin{nalign}
		D(\Delta_\nu) = \left\lbrace  u\in \bigcap_{p\geqslant 1} W^{2,p}_{loc}(\Omega): \quad u, \, \Delta u \in C(\overline{\Omega}), \quad  \partial_{\nu} u = 0 \text{ on } \partial\Omega  \right\rbrace,
	\end{nalign}
	is a sectorial operator on the space $C(\overline{\Omega})$, see {\cite[Corollary 3.1.24]{MR3012216}}. Now, it is sufficient to apply the Banach fixed point theorem to system \eqref{eq:InitialIntegralSystem} with the locally Lipschitz nonlinearities $\pmb f$ and $g$. In this approach the \rev{Bochner} integrals on the right hand side of equation \eqref{eq:InitialIntegralSystem} are convergent in the corresponding spaces. Indeed, for every $\pmb u \in  C\big([0,\,T],\, \Li^n \big)$ and $v\in C\big([0,\,T], \, C(\oo)\big)$  we have $\pmb f\big( \pmb u, \, v) \in C\big( [0,T], \, \Li^n\big)$ and consequently the integral is convergent 
	\begin{nalign}
		\int_0^t \pmb f\big(\pmb u(s), v(s)\big) \ds\in C\big( [0,\,T], \, \Li^n\big).
	\end{nalign} 
	Analogously, we have $g\big( \pmb u, \, v) \in C\big( [0,T], \, \Li\big)$ and hence by the well-known property of the Neumann semigroup we obtain 
	\begin{nalign}
		\rev{w(s) \equiv e^{\gamma\Delta_\nu (t-s)} g\big(\pmb u(s), v(s)\big) \in C(\oo)}
	\end{nalign} 
	for each $t\in (0,\, T)$ and $s\in[0,\, t)$. Therefore  
	\begin{nalign}
		w \in C\big( [0,\, t); \, C(\oo) \big) \cap L^\infty\big([0, \, t], \, \Li\big)
	\end{nalign}
	and for every $\varepsilon >0$
	\begin{nalign}
		\int_0^{t-\varepsilon}  e^{\gamma\Delta_\nu (t-s)} g\big(\pmb u(s), v\big) \ds \in C\big( [0, T -\varepsilon]; \, C(\oo) \big).
	\end{nalign}
	Since $g(\pmb u, \, v) \in {L^\infty([0,T]; \Li)} $ we have
	\begin{nalign}
		\rev{\int_0^{t-\varepsilon}  e^{\gamma\Delta_\nu (t-s)} g\big(\pmb u(s), v\big) \ds \xrightarrow{\varepsilon \to 0} \int_0^{t}  e^{\gamma\Delta_\nu (t-s)} g\big(\pmb u(s), v(s)\big) \ds}
	\end{nalign}
	locally uniformly on $[0, \,T) \times \oo$.
\end{proof}

\subsection{Proofs of stability theorems} 

We are in \rev{a position} to apply  theorems from Sections \ref{sec:LinearEquation} and \ref{sec:Nonlinear Stability} to obtain a stability  of stationary solutions to the initial-boundary value problem for general reaction-diffusion-ODE system \eqref{eq1}-\eqref{nonlinearity}.

\begin{proof}[Proof of Theorem \ref{thm:ApplicationSystemStabilityDiscontinuous2}]
	Let $(\overline{ \pmb U}, \overline{V})\in \R^{n+1}$ be a constant solution to problem \eqref{DisProbDef} with the corresponding  linearisation matrices \eqref{ass:Matrix} and satisfying Assumptions \ref{ass:StatSol}, \ref{ass:DiscontinuousStationaryTwoBranches}, \ref{ass:LinearStability} and \ref{ass:LinearStabilityGamma}. Denote by $\rho>0$ a constant required in Theorem~\ref{thm:ResEsNonCon}. 
	
	By the assumptions $s\big(\pmb f_{\pmb u} \left({ \pmb k_1(\overline{V})},  \overline{ V}\right)\big) < 0$ and $s\big(\pmb f_{\pmb u} \left({ \pmb k_2(\overline{V})},  \overline{ V}\right)\big) < 0$, we may choose $\varepsilon_1>0$ such that 
	\begin{nalign} 
		\label{P2VarCon}
		\sup_{w\in{B_{\varepsilon_1}(\overline{ V})}} s\big(\pmb f_{\pmb u} ({ \pmb k_1(w)}, \, w)\big)  <  0 \quad \text{and} \quad
		\sup_{w\in{B_{\varepsilon_1}(\overline{ V})}} s\big(\pmb f_{\pmb u} ({ \pmb k_2(w)}, \, w)\big) < 0.
	\end{nalign}
	Similarly, by continuity we have
	\begin{nalign}
		\label{P2VarNeigh}
		|\Omega|^\frac{1}{N}\sup_{w\in{B_{\varepsilon_1}(\overline{ V})}} \left|
		\begin{pmatrix}
			\pmb f_{\pmb u} \left({ \pmb k_1(w)}, \, w\right)
			&\pmb f_{v} \left({ \pmb k_1(w)}, \, w\right) \\ 
			g_{\pmb u} \left({ \pmb k_1(w)}, \, w\right)
			&g_{v} \left({ \pmb k_1(w)}, \, w\right)  
		\end{pmatrix}
		-
		\begin{pmatrix}
			\pmb A_0 
			&\pmb B_0 \\ 
			\pmb C_0 
			&d_0  
		\end{pmatrix}\right|
		< \frac{\rho}{8}.
	\end{nalign} 
	Next, we choose $\delta_1>0$ so small to have
	\begin{nalign}
		\label{P2VarNeigh2}
		\delta_1^\frac{1}{N}\sup_{w\in{B_{\varepsilon_1}(\overline{ V})}} \left|
		\begin{pmatrix}
			\pmb f_{\pmb u} \left({ \pmb k_2(w)}, \, w\right)
			&\pmb f_{v} \left({ \pmb k_2(w)}, \, w\right) \\ 
			g_{\pmb u} \left({ \pmb k_2(w)}, \, w\right)
			&g_{v} \left({ \pmb k_2(w)}, \, w\right)  
		\end{pmatrix}
		-
		\begin{pmatrix}
			\pmb A_0 
			&\pmb B_0 \\ 
			\pmb C_0 
			&d_0  
		\end{pmatrix}\right|
		< \frac{\rho}{8}.
	\end{nalign} 

	Now, we consider a discontinuous solution $\UV$ to problem 		 \eqref{DisProbDef} constructed in Theorem \ref{DisExBan} with $\varepsilon < \min(\varepsilon_0, \varepsilon_1)$ and $|\Omega_2| < \min(\delta, \delta_1)$. Let us check that the assumptions of Theorem~\ref{thm:ResEsNonCon} are satisfied in the case of matrices from Remark \ref{Rem:MatLin}. Indeed, Assumptions \ref{ass:LinearStability}, \ref{ass:LinearStabilityGamma} and inequality \eqref{P2VarCon} imply inequalities \eqref{eq:ThmNeq} and \eqref{eq:Acon}. Moreover, inequalities \eqref{P2VarNeigh} and \eqref{P2VarNeigh2} provide
	\begin{nalign}
		\|\pmb A - \pmb A_0\|_N  &\leqslant  \|\pmb A - \pmb A_0\|_{L^{N}(\Omega_1)} + \|\pmb A - \pmb A_0\|_{L^{N}(\Omega_2)} \\ &\leqslant |\Omega_1|^\frac{1}{N} \|\pmb f_{\pmb u} (\pmb k_1(V), \, V) - \pmb A_0\|_{L^\infty(\Omega_1)} +|\Omega_2|^\frac{1}{N}\|\pmb f_{\pmb u} (\pmb k_2(V), \, V) - \pmb A_0\|_{L^\infty(\Omega_2)} \\  &\leqslant \rho/4.
	\end{nalign}
	We show analogously the following inequalities 
	\begin{nalign}
		\|\pmb B - \pmb B_0\|_\Lp{N} \leqslant\rho/4,\quad  \|\pmb C - \pmb C_0\|_\Lp{N} \leqslant\rho/4\quad \text{and} \quad\| d - d_0\|_\Lp{N} \leqslant\rho/4.
	\end{nalign}
	By Theorem \ref{thm:ResEsNonCon}, the stationary solution $\UV$ is exponentially asymptotically stable in $\Lp{}^{n+1}$ for each $p\in (1,\infty)$.
\end{proof}

\begin{proof}[Proof of Theorem \ref{thm:ApplicationSystemStabilityDiscontinuous}]
	It suffices to check assumptions of Theorem~\ref{thm:NonlinearStability}  in the case of problem \eqref{eq:ReactionDiffusionLinearisedProblemWithRest}-\eqref{eq:LinIni}. By Remark \ref{rem:Conti}, a stationary solution constructed in Theorem~\ref{DisExBan} satisfies $(\pmb U, V)\in \Li^n\times C(\oo)$. Thus, problem \eqref{eq:ReactionDiffusionLinearisedProblemWithRest}-\eqref{eq:LinIni} has a solution 
	\begin{nalign}
		\pmb \varphi = \pmb u - \pmb U\in C\big([0,\,T],\, \Li^n \big) \quad \text{and}\quad \psi = v - V\in C\big([0,\,T], \, C(\oo)\big),
	\end{nalign} 
	for some $T>0$ by applying Theorem \ref{thm:ExPar}. Due to the properties of the Taylor remainders, Assumption \ref{ass:SquareEs} holds true for each $K>0$. By Theorem \ref{thm:ApplicationSystemStabilityDiscontinuous2}, the corresponding linearisation operator $\big(\L_p,D(\L_p)\big)$ satisfies $s(\L_p)<0$ for each $p\in(1, \infty)$. It remains to prove that $s\big(g_v(\pmb U(\cdot), \, V(\cdot)\big) <0$ provided $\varepsilon>0$ is sufficiently small. This however is an immediate consequence of condition \eqref{eq:Dcon}, inequalities \eqref{DisVar} and the $C^2$-regularity of nonlinear terms. By Theorem~\ref{thm:NonlinearStability} applied to $\pmb \varphi = \pmb u - \pmb U$ and $\psi = v - V$, if 
	\begin{nalign}
		\|\pmb u_0 - \pmb U\|_\Li + \|v_0-V\|_\Li <K/(4C^2),
	\end{nalign}
	then
	\begin{nalign}
		\|\pmb u - \pmb U\|_\Li + \|v - V\|_\Li \leqslant C \big(\|\pmb u_0 - \pmb U\|_\Li + \|v_0 - V\|_\Li\big) e^{-\frac{k}{2} t},
	\end{nalign}
	for all $t\in [0,\, T]$. Since the constants here are independent of $T$ this solution and the inequality can be extended to all $t>0$.
\end{proof}

\section{Example} 

\label{sec:Examples}

\subsection{Model example}

We apply results from this work to the following reaction-diffusion-ODE model
\begin{nalign}
	\label{eq:ExamplesFitzHugh}
	u_t &= u(1-u)(u-\beta) - v, && \text{for} \; x \in \overline{\Omega}, \; t>0, \\
	v_t &= \gamma  \Delta_\nu v + \sigma u - \delta v - \rho, && \text{for} \; x \in \Omega, \; t>0,
\end{nalign}
supplemented with the initial conditions
\begin{nalign}
	\label{eq:ExamplesIni}
	u(0,x) = u_0(x), \quad 
	v(0,x) = v_0(x), 
\end{nalign}   
where $\beta$, $\sigma$, $\delta$, $\rho$ are positive constants and $\Omega \subseteq \R^{N}$ (with arbitrary $N\geqslant 1$) is an open bounded domain. Here we are motivated by the FitzHugh-Nagumo system
\begin{nalign}
		u_t = \varepsilon^{2} u_{x x}+u(1-u)(u-\beta)-v, \quad 
		v_t=\sigma u-\delta v-\rho
\end{nalign}
which was used to model pulse propagation in excitable media, see \textit{e.g.}~\cite{FITZHUGH1961445, MR2054989, 4066548, MR1779040}. The following generalisation of the FitzHugh-Nagumo system 
\begin{nalign}
		\label{eq:FitzOrg}
		u_t = \varepsilon^{2} \Delta_\nu u+u(1-u)(u-\beta)-v, \quad 
		v_t = \gamma \Delta_\nu v+\sigma u-\delta v-\rho
\end{nalign}
was studied in the context of pattern formation (\cite{ MR2207551, MR1153026, MR1954510}). This system has traveling wave
solutions and other types of solutions with  a complex behavior (see, e.g., \cite{MR2110206, MR2765427, MR3527621} and
references therein). In particular, the work \cite{MR1954510} contains some results on a behaviour of solutions to system \eqref{eq:FitzOrg} when $\varepsilon \to 0$.

A study of the one dimensional version of the reaction-diffusion-ODE system \eqref{eq:ExamplesFitzHugh} can be also motivated by 
series of papers  \cite{MR3583499, MR3679890,MR3220545, Kthe2020HysteresisdrivenPF, MR2205561, MR3329327, MR3059757} where  similar  receptor-based models were considered. 

The paper \cite{MR3973251} contains a construction of a family of discontinuous steady states of system \eqref{eq:ExamplesFitzHugh} in the one dimensional case and the proof of their stability. Below, we explain how using theory developed in this work we obtain analogous results in higher dimensions.
	
\subsection{Constant solutions}

An analysis of stationary solutions to system \eqref{eq:ExamplesFitzHugh} begins by the assumption that it has a constant stationary solution, namely a vector $(\overline{U}, \overline{V}) \in \R^2$ satisfying 
\begin{nalign}
	\overline{ U}(1-\overline{ U})(\overline{U}-\beta) - \overline{  V}  = 0\qquad  \text{and} \qquad  \sigma \overline{U} - \delta \overline{  V} - \rho   = 0.
\end{nalign}
On Fig. \ref{fig:Histeresis}, we choose the parameters $\beta, \sigma, \delta, \rho$ in such a way to have three constant solutions $\left(\overline{U}_1,\overline{V}_1\right)$, $\left(\overline{U}_2, \overline{V}_2\right)$ and $\left(\overline{U}_3, \overline{V}_3\right)$. 

\begin{figure}[h]
	\includegraphics[width=0.7\textwidth]{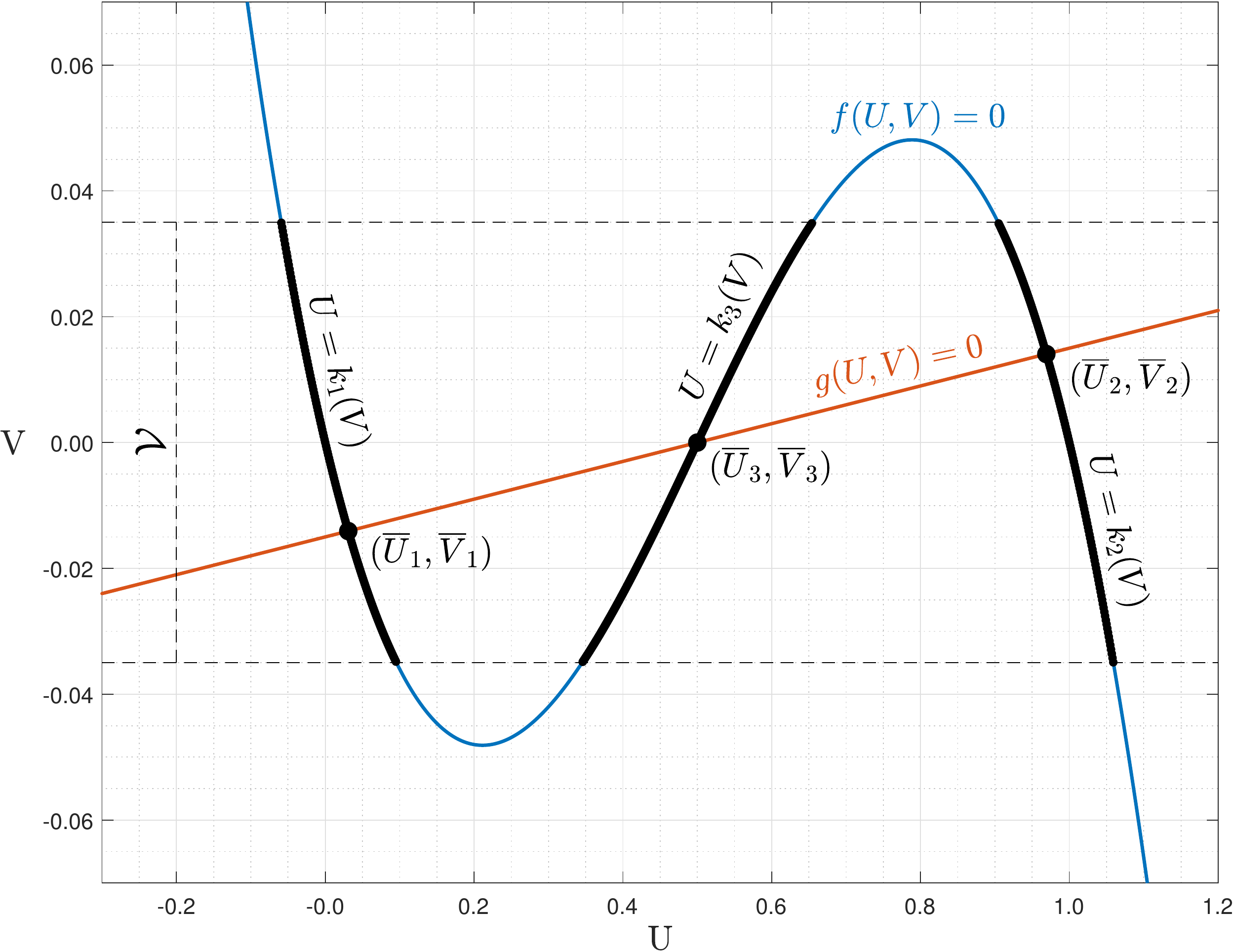}
	\caption{The nullclines for system \eqref{eq:ExamplesFitzHugh}, namely, the curves  \newline  $f(U,V) \equiv { U}(1-{ U})({U}-\beta) - {  V} = 0$ and $g(U,V)\equiv \sigma U-\delta V-\rho = 0$.}
	\label{fig:Histeresis}
\end{figure}

\subsection{Regular stationary solutions}
By our previous work \cite{CMCKS01}, a stationary solution $(U,V)$ is called regular if there exist a $C^2$-function $k$ such that $U(x)= k(V(x))$. Such regular stationary solutions to problem \eqref{eq:ExamplesFitzHugh}
can be immediately  obtained from  \cite[Prop.~2.5]{CMCKS01} by a bifurcation around the constant solution $(\overline{U},\overline{V})$ provided
\begin{nalign}
	\frac{1}{\det f_u(\overline{U}, \overline{ V})}\det \begin{pmatrix} f_u(\overline{U}, \overline{ V}) & f_v(\overline{U}, \overline{ V}) \\ g_u(\overline{U}, \overline{ V}) & g_v(\overline{U}, \overline{ V})  \end{pmatrix} = \gamma \mu_k,
\end{nalign}
where $f(u,v) = u(1-u)(u-\beta) - v $, $g(u,v) = \sigma u - \delta v - \rho$ and $\mu_k$ is an eigenvalue of $-\Delta_\nu$, and with the diffusion coefficient $\gamma>0$.  
It follows from our work \cite{CMCKS01} that all regular stationary solutions to system \eqref{eq:ExamplesFitzHugh} are unstable.

\subsection{Discontinuous stationary solutions}

Now, we fix a constant stationary solutions, say, $(\overline{U}, \overline{ V}) = (\overline{U}_1, \overline{V}_1)$ on Fig.~\ref{fig:Histeresis}. Following Assumption~\ref{ass:DiscontinuousStationaryTwoBranches}, we consider two different branches of solutions to equation ${ U}(1-{ U})({U}-\beta) - {  V} = 0$ in a neighbourhood of $\overline{V}_1$. Namely, we choose a set $\V\subseteq \R$ and functions $k_1, k_2\in C^2(\V,\R)$ such that $\overline{ V}_1 \in \V$ and
\begin{nalign}
	\begin{matrix}
	k_1(w)\big(1-{ k_1(w)}\big)\big({k_1(w)}-\beta\big) - {  V} = 0, \\ 	k_2(w)\big(1-{ k_2(w)}\big)\big({k_2(w)}-\beta\big) - {  V} = 0,
	\end{matrix} \quad \text{for each} \quad w\in\V
\end{nalign}
see bold curves on Fig. \ref{fig:Histeresis}. By Theorem \ref{DisExBan}, for an arbitrary decomposition \rev{$\Omega_1\subset \Omega$ and $\Omega_2=\Omega\setminus \overline\Omega_1$}
with sufficiently small $|\Omega_2|>0$ and for all small $\varepsilon>0$ there exist a stationary solution to system \eqref{eq:ExamplesFitzHugh} with the following properties
	\begin{itemize}
	\item $(U, V)\in L^\infty(\Omega)^n \times \W2p$;
	\item $V=V(x)$ is a weak solution to problem
	\begin{nalign}
		\gamma\Delta_\nu V +  \sigma U - \delta V -\rho = 0 \quad \text{for} \quad x\in \Omega
	\end{nalign}
	and  		
	\begin{nalign}
		U (x) = \begin{cases}
			k_1\big(V(x)\big), \quad x\in \Omega_1, \\
			k_2\big(V(x)\big), \quad x\in \Omega_2,
		\end{cases}
	\end{nalign} 
	satisfies ${ U}(1-{ U})({U}-\beta) - {  V} = 0$ for almost all $x\in \Omega$;
	\item the couple $(U,V)$ satisfies
	\begin{nalign}
		\|V - \overline{V}_1 \|_\Li < \varepsilon \quad \text{and} \quad \| U -   \overline{U}_1 \|_{L^\infty(\Omega_1)} + \| U - k_2 (\overline{V}_1) \|_{L^\infty(\Omega_2)}  < C\varepsilon.
	\end{nalign}
\end{itemize}

 An analogous analysis can be repeated by choosing the constant solution $ (\overline{ U}_1, \overline{ V}_1)$ together with the branches $k_1$ and $k_3$, in the neighbourhood $\V \subseteq \R$, see Fig.~\ref{fig:Histeresis}. A~discontinuous stationary solution can be also obtained, if we use a constant solution $(\overline{ U}_2, \overline{ V}_2)$ or $(\overline{ U}_3, \overline{ V}_3)$ with two branches of either $k_1$ or $k_2$ or $k_3$ with a suitable neighbourhood $\V \subseteq \R$ of the points $\overline{ V}_2$ or $\overline{ V}_3$. Here, we can also apply Remark \ref{thm:3Branches} and consider three branches $ k_1,  k_2, k_3$ to obtain discontinuous stationary solutions on three different sets $\Omega_1, \Omega_2, \Omega_3$.

\subsection{Stability of stationary solutions}
Let us discuss sufficient conditions for stability of the discontinuous stationary solution $(U,V) = \big(U(x),V(x)\big)$ constructed around the point $(\overline{ U}_1, \overline{ V}_1)$ with branches $k_1$ and $k_2$. Assumptions~\ref{ass:LinearStability} and condition~\eqref{eq:SecBranch} reduce to the inequalities
\begin{nalign}
	\label{eq:ExampleStability}
	- 3\overline{U}_1^2 + 2 \overline{U}_1(1+\beta&) - \beta <0, \quad - 3k_2(\overline{V}_1)^2 + 2 k_2(\overline{V}_1)(1+\beta) - \beta <0, \\
	&s\begin{pmatrix} 	- 3\overline{U}_1^2 + 2 \overline{U}_1(1+\beta) - \beta & -1 \\ \sigma & -\delta \end{pmatrix} < 0.
\end{nalign}
The first two hold true because the function $U(1-U)(U-\beta)$ is strictly decreasing around points $\overline{ U}_1$ and $k_2(\overline{V}_1)$, see Fig. \ref{fig:Histeresis}. The matrix in the third inequality has eigenvalues with negative real parts if its trace is negative and the determinant is positive which is equivalent to the inequalities 
\begin{nalign}
		- 3\overline{U}_1^2 + 2 \overline{U}_1(1+\beta) - \beta - \delta <0 \quad \text{and} \quad -\delta(- 3\overline{U}_1^2 + 2 \overline{U}_1(1+\beta) - \beta)+\sigma > 0. 
\end{nalign}
Both of them are immediately obtained from first inequality in \eqref{eq:ExampleStability}. Choosing $\gamma>0$ sufficiently large as in Assumption \ref{ass:LinearStabilityGamma} (this requirement can be relaxed by applying Proposition \ref{thm:LinearStabilitystantNoncoefficientsN1})  as well as $\varepsilon>0$ and $|\Omega_2|>0$ sufficiently small we obtain by Theorem~\ref{thm:ApplicationSystemStabilityDiscontinuous2} that the solution $(U,V)$ is linearly exponentially stable. Moreover, since the conditions \eqref{eq:Dcon} reduce to the inequality $-\delta <0$, this solution is also nonlinearly stable in $\Li^{n+1}$ by Theorem~\ref{thm:ApplicationSystemStabilityDiscontinuous}.

On the other hand, every discontinuous stationary solution containing the branch $k_3$ is linearly unstable. This is a consequence of the fact that $3k_3(w)^2 + 2 k_3(w)(1+\beta) - \beta >0$ for each $w\in \V$ and $3k_3(U)^2 + 2 k_3(U)(1+\beta) - \beta$ belongs to the spectrum of the corresponding linearisation operator by equation~\eqref{thm:SpectrumOfOperatorL}, see Remark \ref{thm:AutoPoin} for more comments.   

\subsection{Numerical illustrations} 
We conclude this work by numerical simulations of solutions to problem \eqref{eq:ExamplesFitzHugh}-\eqref{eq:ExamplesIni} with $\beta, \delta, \sigma, \rho$ as on Fig.~\ref{fig:Histeresis} and with large $\gamma >0$. To obtain Fig.~\ref{fig:UVpi} and Fig.~\ref{fig:UVRandom} we used the explicit finite difference Euler method. In both cases, we choose an initial conditions $(u_0, v_0)$ as a small perturbations of the constant solution $(\overline{ U}_3, \overline{ V}_3)$. We decompose the domain $\Omega=[0,1]^2 = {\Omega_1 \cup \Omega_2}$ with $|\Omega_2|>0$ sufficiently small and set  
\begin{nalign}
	\label{ExampleIni}
	u_0 <\overline{ U}_3 \text{ and } v_0<\overline{ V}_3 \text{ on } \Omega_1 \quad \text{as well as} \quad u_0 >\overline{ U}_3 \text{ and } v_0>\overline{ V}_3  \text{ on } \Omega_2.
\end{nalign} 
We choose the $\pi$-shape set $\Omega_2$ on Fig.~\ref{fig:UVpi} and a random small set $\Omega_2$ on Fig.~\ref{fig:UVRandom}. The graphs of the corresponding solutions on both Figures \ref{fig:UVpi} and \ref{fig:UVRandom} were obtained for large values of $t>0$ and, by stability results from this work, they are good approximations of discontinuous stationary solutions. Here we observe a graphical illustration of inequalities \eqref{DisVar}, namely, $U$ stays close to $\overline{ U}_1 = k_1(\overline{ V}_1)$ on the set $\Omega_1$ and close to $k_2(\overline{ V}_1)$ on the set $\Omega_2$. \rev{By colours, we denote different values of $u$ and $v$: blue and black denote that $u$ and $v$ stay close to points $(\overline{U}_1, \overline{V}_1)$  and $(k_2(\overline{  V}_1),\overline{  V}_1)$, respectively; by yellow, green and grey, we mark small perturbations of those values.}

\begin{figure}[!h]
	\includegraphics[width=1\textwidth]{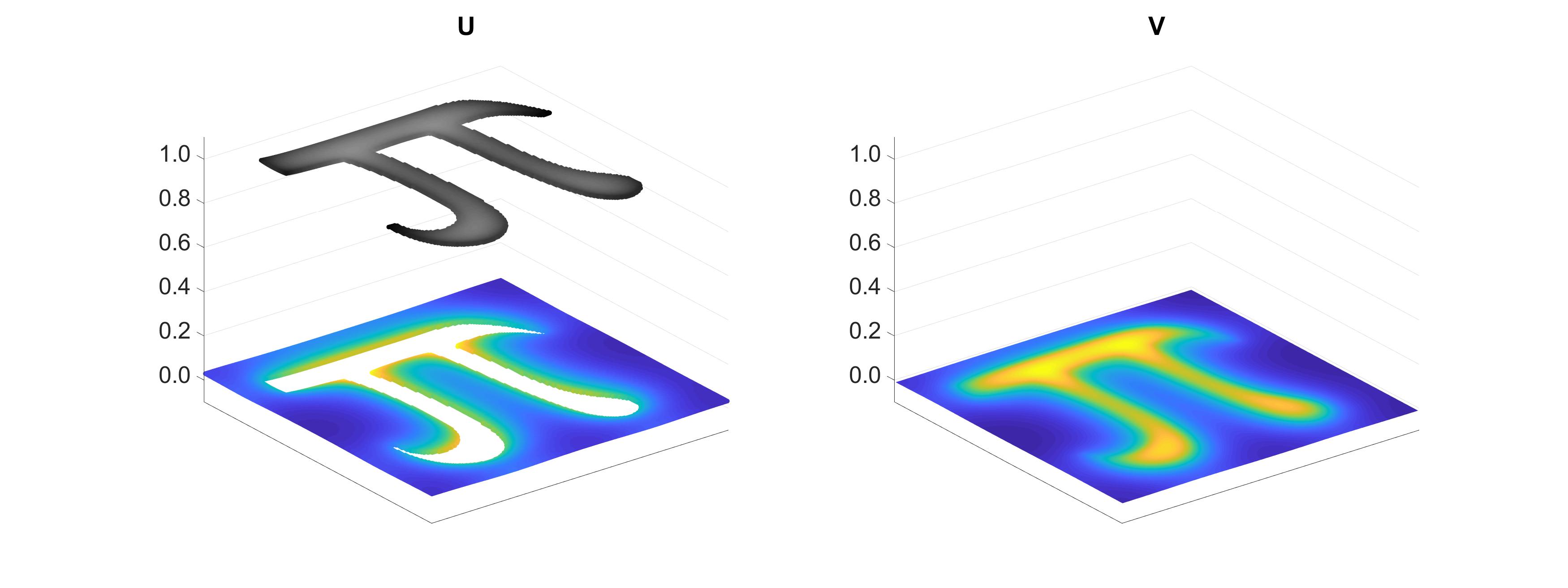}
	\caption{The solution $u,v$ to problem \eqref{eq:ExamplesFitzHugh}-\eqref{eq:ExamplesIni} for sufficiently large $t>0$ and with the initial condition satisfying inequalities \eqref{ExampleIni}. Here, we choose the particular $\pi$-shape set $\Omega_2$ to emphasize that every shape is allowed.}
	\label{fig:UVpi}
\end{figure}

\begin{figure}[!h]
	\includegraphics[width=1\textwidth]{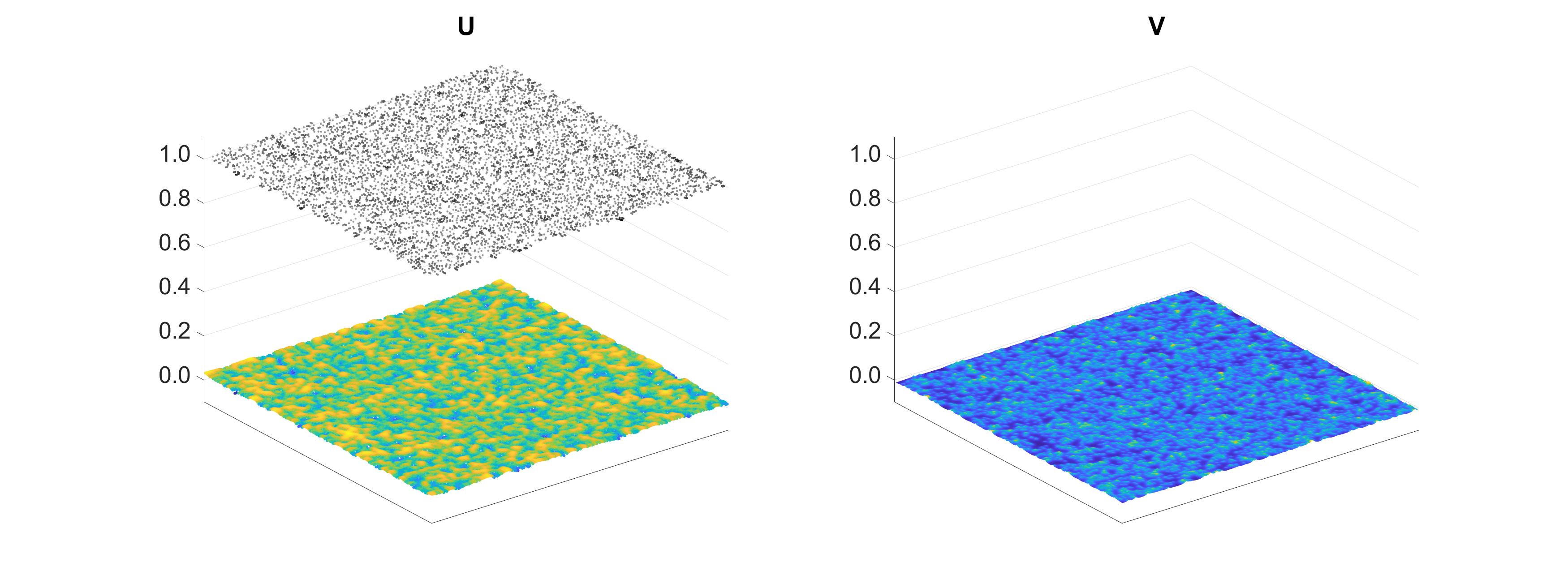}
	\caption{The solution $u,v$ to problem \eqref{eq:ExamplesFitzHugh}-\eqref{eq:ExamplesIni} for sufficiently large $t>0$ and for a random initial condition.}
	\label{fig:UVRandom}
\end{figure}

\appendix 

\newpage




\end{document}